\documentclass{amsart}
\usepackage{amsthm,amssymb,amsmath}
\usepackage{mathtools}
\usepackage[margin=1.3in]{geometry}
\usepackage{xcolor} \definecolor{darkblue}{rgb}{0,0,0.6}
\usepackage[breaklinks, pdftex, ocgcolorlinks, citecolor=darkblue,
filecolor=darkblue, linkcolor=darkblue, urlcolor=black]{hyperref}
\usepackage[capitalize]{cleveref}
\usepackage{enumerate}
\usepackage{color}
\usepackage[matrix,arrow,curve]{xy}
\usepackage{tikz-cd}

\newtheorem{theorem}{Theorem}[section]
\newtheorem{lemma}[theorem]{Lemma}

\newtheorem{corollary}[theorem]{Corollary}

\theoremstyle{definition}

\theoremstyle{remark}
\newtheorem{remark}[theorem]{Remark}
\newtheorem{example}[theorem]{Example}

\newcommand{\wh}{\widehat}
\newcommand{\wt}{\widetilde}
\newcommand{\ol}{\overline}
\newcommand{\ev}{\operatorname{ev}\!{}}
\newcommand{\sq}{\mathbin{\square}}

\DeclareMathOperator{\coker}{coker}
\DeclareMathOperator{\Hom}{Hom}
\DeclareMathOperator{\id}{id}
\DeclareMathOperator{\im}{im}
\DeclareMathOperator{\Ext}{Ext}
\DeclareMathOperator{\Tor}{Tor}
\DeclareMathOperator{\Her}{Her}
\DeclareMathOperator{\Aut}{Aut}
\DeclareMathOperator{\tr}{tr}

\title{Homotopy classification of $4$-manifolds with fundamental group
  $\mathbb{Z}/p \rtimes \mathbb{Z}$}

\author{Kamen V. Pavlov}
\address{School of Mathematical Sciences\\ University of Southampton\\ United Kingdom}
\email{k.v.pavlov@soton.ac.uk}

\begin{document}

\begin{abstract}
  For an odd prime $p$, we show that the homotopy type of a closed
  $4$-manifold with fundamental group $\mathbb{Z}/p \rtimes \mathbb{Z}$ is determined by its
  quadratic $2$-type together with an additional $\mathbb{Z}/p$-valued
  intersection form. We also give a generalization to infinite groups
  of a result of Hambleton--Kreck on the polarized homotopy
  classification of Poincar\'e $4$-complexes, and we prove some
  partial results on the homotopy classification that are applicable
  more generally.
\end{abstract}

\maketitle

\section{Introduction}

The \emph{quadratic $2$-type} $Q(M)$ of a closed $4$-manifold $M$ is
the following collection of invariants
\[
  Q(M) := \left( \pi_1(M), \pi_2(M), k_M, w_M, \lambda_M \right),
\]
where $\pi_2(M)$ is considered as a $\mathbb{Z}[\pi_1(M)]$-module, $k_M \in
H^3(\pi_1(M), \pi_2(M))$ is the $k$-invariant classifying the Postnikov
$2$-type of $M$, $w_M\colon \pi_1(M) \to \{\pm1\}$ is the orientation
character, and $\lambda_M\colon \pi_2(M) \times \pi_2(M) \to \mathbb{Z}[\pi_1(M)]$ is the
equivariant intersection form. An \emph{isomorphism} of quadratic
$2$-types is a pair of isomorphisms on the fundamental group and the
second homotopy module which respect the $k$-invariant and induce
isomorphisms on the orientation character and the equivariant
intersection form.

These invariants play a large role in the study of non-simply
connected $4$-manifolds up to homotopy equivalence, and have been
shown to determine the homotopy type of the manifold in a wide variety
of cases depending on the fundamental group. For different classes of
finite groups, this was shown by Hambleton and Kreck~\cite{HK},
Kasprowski--Powell--Ruppik~\cite{KPRup}, and
Kasprowski--Nicholson--Ruppik~\cite{KNR}, all in the oriented
setting. Most recently Hillman--Kasprowski--Powell--Ray~\cite{KPR}
extended these results to some infinite groups in the general
setting. In particular, they investigated whether the quadratic
$2$-type determines the homotopy type of closed $4$-manifolds with
$3$-manifold fundamental groups, i.e.\ groups which arise as the
fundamental group of some (not necessarily orientable) closed
$3$-manifold. They obtained a positive answer under some conditions,
including the group $\mathbb{Z}/2 \times \mathbb{Z}$, the fundamental group of $\mathbb{RP}^2
\times S^1$, in the oriented setting. However, in general the groups $\mathbb{Z}/p \times
\mathbb{Z}$ for prime $p$ provide known examples of the failure of the
quadratic $2$-type to be a complete homotopy invariant.

\begin{example}
  \label{lens}
  Let $L$ and $L'$ be two $3$-dimensional lens spaces with isomorphic
  fundamental groups but distinct homotopy types (e.g.\ $L_{5, 1}$
  and $L_{5, 2}$). Then $L \times S^1$ and $L' \times S^1$ both have fundamental
  group $\mathbb{Z}/p \times \mathbb{Z}$ and also have distinct homotopy types, but $\pi_2(L \times
  S^1) = \pi_2(L' \times S^1) = 0$, so their quadratic $2$-types are
  trivially isomorphic.

  There is an analogous nonorientable example for Poincar\'e
  $4$-complexes. Let $L$ for $L'$ be as above, and let $q$ be a
  positive integer less than the order $p$ of their fundamental group
  such that $q^2 \equiv -1 \mod p$. Then by~\cite[Theorem~V]{olum} there
  are degree-$(-1)$ self maps on $L$ and $L'$ which induce
  multiplication by $q$ on the fundamental group. These maps must be
  self homotopy equivalences by Poincar\'e duality, and so their
  mapping tori $S^1\wt\times L$ and $S^1\wt\times L'$ are both nonorientable
  Poincar\'e $4$-complexes with fundamental group $\mathbb{Z}/p\rtimes\mathbb{Z}$ with the
  action of $\mathbb{Z}$ on $\mathbb{Z}/p$ given by multiplication by $q$. As in the
  previous example, they have distinct homotopy types but trivially
  isomorphic quadratic $2$-types.
\end{example}

In this article, we complete the homotopy classification for the
fundamental groups $\mathbb{Z}/p \rtimes \mathbb{Z}$ by including, in addition to the
invariants in the quadratic $2$-type, an additional $\mathbb{Z}/p$-valued
intersection form $\lambda_M^{\mathbb{Z}/p}$ on cohomology with local
coefficients. Although we state our results for manifolds in this
introduction, they hold more generally for finite Poincar\'e
$4$-complexes, and we will work in this generality during the rest of
the paper. All manifolds will be topological, and are assumed to be
closed and connected; Poincar\'e complexes are assumed to be finite
and connected.

We now define the form $\lambda_M^{\mathbb{Z}/p}$. Note that this will be a form on
cohomology with local coefficients, and so is only defined for
$4$-manifolds with fundamental group $\mathbb{Z}/p\rtimes\mathbb{Z}$. For an odd prime $p$ let
$\pi$ be the group $\mathbb{Z}/p\rtimes\mathbb{Z}$ with $\mathbb{Z}$ acting on $\mathbb{Z}/p$ by multiplication by
$q$, and consider the $\mathbb{Z}\pi$-module $\mathbb{Z}/p$ with the generator of the $\mathbb{Z}$
subgroup acting by multiplication by $q$ and the $\mathbb{Z}/p$ subgroup acting
trivially. Denote this module by $(\mathbb{Z}/p)^q$. The form $\lambda_M^{\mathbb{Z}/p}$ is
then defined on cohomology with local coefficients $H^2(M; (\mathbb{Z}/p)^q)$
via the cap product
\begin{align*}
  \lambda_M^{\mathbb{Z}/p} \colon H^2(M; (\mathbb{Z}/p)^q)\times H^2(M; (\mathbb{Z}/p)^q) & \to \mathbb{Z}/p \\ 
  (\alpha,\beta)                                          & \mapsto \beta\cap(\alpha\cap[M]).
\end{align*}

Having these forms be abstractly isomorphic will not be enough to give
us any information on the homotopy classification, so we will need a
notion of isomorphism compatible with a given isomorphism of the
quadratic $2$-type. We spell this out, along with a more detailed
definition of the form itself, in \cref{form}.

With this notion of isomorphism, the form $\lambda_M^{\mathbb{Z}/p}$ is able to
distinguish the homotopy inequivalent examples above, as we show
later, but what is more is that it combines with the quadratic
$2$-type to give a complete classification for these fundamental
groups. This is our main theorem.

\begin{theorem}
  \label{mainth}
  Let $p$ be an odd prime, and let $M$ and $M'$ be closed
  $4$-manifolds with fundamental group $\mathbb{Z}/p\rtimes\mathbb{Z}$. Then $M$ is homotopy
  equivalent to $M'$ if and only if they have isomorphic quadratic
  $2$-types and $\lambda_M^{\mathbb{Z}/p} \cong \lambda_{M'}^{\mathbb{Z}/p}$.
\end{theorem}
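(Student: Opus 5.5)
Necessity is the easy direction and I would dispose of it first. A homotopy equivalence $f\colon M\to M'$ induces an isomorphism of quadratic $2$-types: it identifies fundamental groups, $\pi_2$-modules and $k$-invariants, it respects orientation characters, and it takes $[M]$ to $\pm[M']$. The same $f$ also induces an isomorphism on $H^2(-;(\mathbb{Z}/p)^q)$. Since cap products are natural, this isomorphism carries $\lambda_{M'}^{\mathbb{Z}/p}$ to $\lambda_M^{\mathbb{Z}/p}$, possibly up to the sign coming from the degree. That sign is exactly what the compatible notion of isomorphism in \cref{form} is meant to absorb.

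For sufficiency I would follow the Hambleton--Kreck strategy, in the form generalized to infinite groups as announced in the abstract. Let $B$ be the Postnikov $2$-type determined by $(\pi,\pi_2,k)$. Fix the given isomorphism of quadratic $2$-types and use it to choose $3$-equivalences $c\colon M\to B$ and $c'\colon M'\to B$ that are compatible with it. The polarized classification reduces the problem to showing that $c_*[M]$ and $c'_*[M']$ agree in $H_4(B;\mathbb{Z}^w)$, modulo the action of automorphisms of $B$ that preserve the quadratic data. The key structural input is a computation of $H_4(B;\mathbb{Z}^w)$ and of the kernel of the map that records $\lambda$.

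Concretely, the Serre spectral sequence of $\widetilde B\to B\to K(\pi,1)$ gives a filtration on $H_4(B;\mathbb{Z}^w)$ with the following terms:
\begin{itemize}
  \item the edge term $H_0(\pi;\Gamma(\pi_2)\otimes\mathbb{Z}^w)$, which detects the equivariant intersection form $\lambda$;
  \item the intermediate terms $H_2(\pi;H_2(\widetilde B)\otimes\mathbb{Z}^w)$ and $H_3(\pi;H_1)=0$;
  \item the bottom term $H_4(\pi;\mathbb{Z}^w)$.
\end{itemize}
Since $\pi=\mathbb{Z}/p\rtimes\mathbb{Z}$ has a finite-dimensional model built as a mapping torus of $B\mathbb{Z}/p$, the Wang sequence expresses its homology in terms of $\mathbb{Z}/p$-homology with the twisted $\mathbb{Z}$-action. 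This gives $H_4(\pi;\mathbb{Z}^w)\cong \mathbb{Z}/p$ or $0$, according to whether $q^2$ acts compatibly with $w$. The difference $c_*[M]-c'_*[M']$ maps to zero in the $\lambda$-quotient because $\lambda_M\cong\lambda_{M'}$. I would first argue that the intermediate $H_2$-terms either vanish or are killed by $p$ and are absorbed by self-equivalences of $B$. That leaves a possible difference lying in (the image of) $H_4(\pi;\mathbb{Z}^w)\cong\mathbb{Z}/p$. This is exactly the lens-space ambiguity of \cref{lens}.

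The main obstacle is to show that $\lambda^{\mathbb{Z}/p}$ detects this last $\mathbb{Z}/p$. My plan is to use the classes in $H^2(\pi;(\mathbb{Z}/p)^q)\cong\mathbb{Z}/p$, pulled back via $c$ to $H^2(M;(\mathbb{Z}/p)^q)$. For $\alpha$ such a class, $\lambda_M^{\mathbb{Z}/p}(\alpha,\alpha)=\langle\alpha\cup\alpha, c_*[M]\rangle$, where $\alpha\cup\alpha\in H^4(\pi;\mathbb{Z}/p)$ is a generator; this holds because for $\mathbb{Z}/p$ the square of the degree-$2$ class generates degree $4$. Hence pairing $c_*[M]$ with $\alpha^2$ reads off precisely its component in $H_4(\pi)$, up to contributions from the higher filtration that are already fixed by $\lambda$. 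A compatible isomorphism $\lambda_M^{\mathbb{Z}/p}\cong\lambda_{M'}^{\mathbb{Z}/p}$ then forces the two components to agree, after composing with the automorphism of $\pi$ that scales the $\mathbb{Z}/p$ factor (this acts on $H_4(\pi)$ by squares). The delicate part is checking that the pairing is well-defined on the filtration quotient and that the available automorphisms of $B$ realize the required units. Once that is done, the polarized classification yields the homotopy equivalence.
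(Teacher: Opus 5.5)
Your reduction to the lens-space ambiguity in $H_4(\pi;\mathbb{Z}^w)$ only works when $\pi_2(M)$ is stably free. In that case your plan is essentially the paper's \cref{main-free,form-dist-free}.

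The step that fails in general is the claim that $\lambda$ controls the $\Gamma$-term. That term is the bottom of the filtration, i.e.\ $\im\varphi_B$, not a quotient, and $\Theta_B$ need not be injective on it. By \cref{stab-class}, the other case is $\pi_2(M)$ stably $A\oplus A'$, which happens exactly when $c_*[M]=0$. There $d_3$ kills $H_4(\pi;\mathbb{Z}^w)$ (\cref{dthreeiso}), so that term carries no information. Instead, for $q^2\equiv w(t)\bmod p$, the group $\mathbb{Z}^w\otimes_{\mathbb{Z}\pi}\Gamma(A)$ contains a $p$-torsion element $\rho$ (\cref{torsfin-dir,torsfin}). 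Since $\Her^w$ is torsion-free, $\rho\in\ker\mathcal{B}$, so $\Theta_B\varphi_B(\rho)=\ev^*\mathcal{B}(\rho)=0$. Yet $\varphi_B(\rho)\neq0$ by \cref{ker-phi-cor}. \cref{realization-intr} then gives Poincar\'e complexes with the same quadratic $2$-type and fundamental classes $f_*[M]+k\varphi_B(\rho)$, hence $p$ homotopy types over $B$ (\cref{p-htpy-types}). Your invariant cannot separate them. For $\alpha,\beta$ pulled back from $B\pi$, the pullbacks along the universal cover $\phi\colon\wt B\to B$ vanish, so $\beta\cap(\alpha\cap\varphi_B(\rho))=\phi_*\bigl(\phi^*\beta\cap(\phi^*\alpha\cap\rho)\bigr)=0$.

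The missing ingredient is \cref{mainall}. You need classes in $H^2(B;(\mathbb{Z}/p)^q)$ whose image in $\Hom_{\mathbb{Z}\pi}(\pi_2(B),(\mathbb{Z}/p)^q)$ is nonzero on $A$ and zero on $A'$. The vanishing on $A'$ is what guarantees they lift to $H^2(B)$, by comparison with the presentation complex. You then evaluate them on $\rho$ via Whitehead's formula (\cref{whitehead-eval}).

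To know that every difference $f_*[M]-f'_*[M']$ is a multiple of $\varphi_B(\rho)$, you also need two further inputs:
\begin{itemize}
\item $\ker\Theta_B\subseteq\im\varphi_B$ (\cref{right-cont-top-general}). The paper proves this from reflexivity of $A$, from $H_2(\pi;A^w)$ being trivial or $\mathbb{Z}/p$, and from Bourbaki's projectivity criterion. Your appeal to absorbing the $H_2$-terms by self-equivalences of $B$ does not establish it.
\item The computations of $\ker\mathcal{B}$ and $\ker\varphi_B$ (\cref{rkzero,weird-ideal-B-inj,ker-phi-cor}).
\end{itemize}

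There is also a smaller error in the stably free case. $H^2(\pi;(\mathbb{Z}/p)^q)$ surjects onto $H^2(\pi';\mathbb{Z}/p)\cong(\mathbb{Z}/p)^2$, so it is not $\mathbb{Z}/p$. Moreover $H_4(\pi')\cong H_3(\mathbb{Z}/p)\otimes H_1(\mathbb{Z})$, so the square of the degree-$2$ class pulled back from $\mathbb{Z}/p$ pairs trivially with it. The nonzero pairing is that class times the class $x\times s$ coming from $H^1(\mathbb{Z}/p)\otimes H^1(\mathbb{Z})$. A suitable $\alpha$ with $\langle\alpha^2,c_*[M]\rangle\neq0$ still exists by polarization, but it is not the one you name. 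Finally, no rescaling automorphism of $\pi$ is needed, because the compatible isomorphism of forms already fixes the identification.
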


In~\cite{KPR}, the authors give criteria for deciding whether
the quadratic $2$-type determines the homotopy type. To prove
\cref{mainth}, we utilize their methods to obtain that, for $\mathbb{Z}/p\rtimes\mathbb{Z}$,
there are a finite number of homotopy types with a fixed quadratic
$2$-type, and then we show that the form $\lambda_M^{\mathbb{Z}/p}$ distinguishes
between these homotopy types.

The HKPR criteria are based on the following commutative diagram,
where $B$ is a $3$-coconnected space with fundamental group $\pi$ and $w
\colon \pi \to \{\pm1\}$ is a homomorphism; see \cref{comm-diag} for the
definition of the maps involved:
\begin{equation*}
  \begin{tikzcd}
    \mathbb{Z}^w \otimes_{\mathbb{Z}\pi} H_4(B; \mathbb{Z}\pi) \arrow[r, "\varphi_B"] \arrow[d, "\mathcal{B}_{H_2(B;, \mathbb{Z}\pi)}"]
    & H_4(B; \mathbb{Z}^w) \arrow[d, "\Theta_B"] \\
    \Her^w(H_2(B; \mathbb{Z}\pi)^{*}) \arrow[r, "\ev^{*}"]
    & \Her^w(H^2(B; \mathbb{Z}\pi)).
  \end{tikzcd}
\end{equation*}

Let $M$ and $M'$ be $4$-manifolds with fundamental group $\pi$,
orientation character $w$ and $3$-connected maps $f \colon M \to B$ and
$f' \colon M' \to B$ to a common Postnikov $2$-type. The two main
HKPR criteria are the following:
\begin{enumerate}
  \item\label{diff} The difference $f_{*}[M] - f_{*}'[M'] \in H_4(B;
    \mathbb{Z}^w)$ is in the image of the map $\varphi_B$.
  \item\label{kers} The kernel of the composition $\ev^{*}\circ\mathcal{B}_{H_2(B;
      \mathbb{Z}\pi)}$ is contained in the kernel of $\varphi_B$.
\end{enumerate}

We will see in \cref{sec1} that it is condition \eqref{diff} that
fails in \cref{lens}; in \cref{sec2} we will see that condition
\eqref{kers} can also fail for the fundamental groups $\mathbb{Z}/p\rtimes\mathbb{Z}$.

Along the way to \cref{mainth}, we will prove a few results which work
in greater generality than just the groups $\mathbb{Z}/p\rtimes\mathbb{Z}$. In \cref{sec0} we
will prove the following result on the classification of Poincar\'e
$4$-complexes with isomorphic quadratic $2$-types; it is a
generalization of Hambleton and Kreck's result for finite
fundamental groups in the oriented setting \cite[Theorem~1.1]{HK}.

\begin{theorem}
  \label{realization-intr}
  Let $M$ be a Poincar\'e $4$-complex with fundamental group $\pi$,
  orientation character $w$, and map to the Postnikov $2$-type $f
  \colon M \to B$. For every element $\beta \in \ker(\ev^{*}\circ\mathcal{B}_{H_2(B; \mathbb{Z}\pi)}) \subseteq
  \mathbb{Z}^w\otimes_{\mathbb{Z}\pi}H_4(B; \mathbb{Z}\pi)$, there exists a Poincar\'e $4$-complex $M_{\beta}$
  with the same quadratic $2$-type as $M$ and a $3$-connected map
  $f_{\beta} \colon M_{\beta} \to B$ such that $(f_{\beta})_{*}[M_{\beta}] = f_{*}[M] +
  \varphi_B(\beta) \in H_4(B; \mathbb{Z}^w)$.
\end{theorem}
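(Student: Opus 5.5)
Following Hambleton--Kreck, the construction is a ``pinch and reglue'' on the top cell. Take a $4$-cell $e^4$ of $M$, write $M = M_0 \cup_\phi e^4$ with $M_0$ the complement of its interior, and replace $\phi \in \pi_3(M_0)$ by $\phi + \gamma$ for a suitable $\gamma \in \pi_3(M_0)$. The complex $M_\beta := M_0 \cup_{\phi+\gamma} e^4$ then has the same $3$-skeleton, hence the same $\pi_1$, $w$ and (after checking) the same Postnikov $2$-type. Since $B$ is $3$-coconnected, $f|_{M_0}$ extends over the new cell to a map $f_\beta$, which is again $3$-connected.

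\textbf{Step 1: identify $\beta$ with a class in $\pi_3$.} Let $\wt B$ be the universal cover. Because $B$ is $3$-coconnected, $H_4(B;\mathbb{Z}\pi) = H_4(\wt B)$ is computed by Whitehead's sequence $H_4(\wt B) \to \Gamma(\pi_2) \to \pi_3 \to \dots$. For $\wt M$, the Hurewicz map on $\pi_3(M_0)$ lets us realise elements of $H_4(\wt B)$ as images of $\pi_3(\wt M_0) = \pi_3(M_0)$, modulo the part detected on $\Gamma(\pi_2(M))$. Concretely:
\begin{enumerate}
\item Choose a lift $\wt\beta \in H_4(B;\mathbb{Z}\pi)$ of $\beta$.
\item Use the cellular chain complex of $\wt M_0 \to \wt B$ and the fact that $f$ is $3$-connected to write $\wt\beta$ as $f_*$ of the Hurewicz image of some $\gamma \in \pi_3(M_0)$, after correcting by the boundary of the top cell.
\end{enumerate}
The degree formula then follows: $(f_\beta)_*[M_\beta] = f_*[M] + \varphi_B(\beta)$, since $\varphi_B$ is the transfer/augmentation $\mathbb{Z}^w\otimes H_4(B;\mathbb{Z}\pi)\to H_4(B;\mathbb{Z}^w)$ and the new fundamental cycle is the old one plus the image of $\gamma$.

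\textbf{Step 2: the new attaching map gives a Poincaré complex with the same $\pi_2$ and intersection form.} Changing $\phi$ by $\gamma$ alters the equivariant intersection form on $\pi_2 = H_2(\wt M)$ by the quadratic map $\Gamma(\pi_2)\to$ Hermitian forms, i.e.\ by $\mathcal B_{H_2(B;\mathbb{Z}\pi)}(\beta)$ pushed to $\Her^w(H^2(B;\mathbb{Z}\pi))$ via $\ev^*$. The hypothesis $\beta\in\ker(\ev^*\circ\mathcal B)$ is exactly what guarantees that the form on $H^2(\wt M_\beta)\cong H^2(\wt M)$ is unchanged. Since it remains nonsingular, cap product with $[M_\beta]$ is an isomorphism on $H^2 \to H_2$. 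One then deduces the full Poincaré duality chain equivalence $C^{4-*}(\wt M_\beta)\to C_*(\wt M_\beta)$, as in Wall's criterion: duality in degrees $0,1$ follows from $[M_\beta]$ having the same augmentation image, by the previous computation, and the remaining degrees follow by the five lemma/duality symmetry. With the same $\pi_2$, $k$-invariant (the $3$-skeleton is fixed) and isomorphic $\lambda$, the quadratic $2$-types agree.

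\textbf{Main obstacle.} The obstacle is Step 1 combined with the form computation. For infinite $\pi$ one cannot use the finite-group tricks of Hambleton--Kreck (the norm element, Tate cohomology). Instead I would work with $\mathbb{Z}\pi$-module chain complexes directly. In particular, I need to show that the change of top cell is controlled exactly by the Whitehead $\Gamma$-term and that $\mathcal B$ is computed by $x\mapsto (\alpha,\beta')\mapsto$ evaluation of $x$ under $\Gamma(\pi_2)\to \pi_2\otimes\pi_2$. I would first verify this identification for the universal example where $M_0$ is replaced by the $3$-skeleton of $B$, and then pull back along $f$.
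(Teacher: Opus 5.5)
Your construction (reattach the top cell along $\phi+\gamma$, then extend $f$ using $\pi_3(B)=0$) is the one the paper uses. However, the choice of $\gamma$, which makes everything else work, is missing, and what you write in its place does not make sense. The Hurewicz image of $\gamma\in\pi_3(M_0)$ lies in $H_3(\wt M_0)$, not in degree $4$, and it dies in $H_3(\wt B)=0$. So $\wt\beta$ cannot be ``$f_*$ of the Hurewicz image of $\gamma$''.

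Worse, if $\gamma$ has nonzero image in $C_3(\wt M)$, the cellular boundary of the new $4$-cell changes and $C_*(\wt M_\beta)\neq C_*(\wt M)$. Then the identifications you rely on in Step~2 are no longer justified: $H^2(\wt M_\beta)\cong H^2(\wt M)$, the $4$-cell still being a cycle generating $H_4(M_\beta;\mathbb{Z}^w)$, and vanishing of $H^3(M_\beta;\mathbb{Z}\pi)$.

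The missing idea is to take $\gamma$ with \emph{trivial} Hurewicz image, supported on the $2$-skeleton:
\begin{enumerate}
\item Lift $\beta$ to $\Gamma(\pi_2(M))\cong\Gamma(H_2(B;\mathbb{Z}\pi))\cong H_4(B;\mathbb{Z}\pi)$.
\item Lift further to $\Gamma(\pi_2(M^{(2)}))$. This is possible because $\pi_2(M^{(2)})\to\pi_2(M)$ is onto and $\Gamma$ preserves surjections.
\item View the result in $\pi_3(M^{(2)})$, which for a $2$-complex equals $\Gamma(\pi_2)$ by Whitehead's sequence.
\end{enumerate}
Because $\gamma$ factors through $M^{(2)}$, the cellular chain complex is literally unchanged. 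In $Y=M\cup_{\phi+\gamma}D^4$ the difference of the two fundamental classes is $g_*[K]$ with $K=M^{(2)}\cup_\gamma D^4$, which gives $(f_\beta)_*[M_\beta]=f_*[M]+\varphi_B(\beta)$. The statement about the form then needs no chain-level analysis of how $\mathcal{B}$ is computed. It is just the commutative square of \cref{comm-diag}: $\Theta_B\varphi_B(\beta)=\ev^{*}\mathcal{B}(\beta)=0$. So what you call the main obstacle is removed by this one choice, but the choice has to be made.

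The second gap is the degree of duality that the paper treats most carefully: $-\cap[M_\beta]\colon H^1(M_\beta;\mathbb{Z}\pi)\to H_3(M_\beta;\mathbb{Z}\pi)$. For infinite $\pi$ both groups are isomorphic to $H^1(\pi;\mathbb{Z}\pi)$, which need not vanish. This map is not controlled by any augmentation argument; note that $f_{\beta*}[M_\beta]$ and $f_*[M]$ do \emph{not} have the same image, since they differ by $\varphi_B(\beta)$. Nor is there a comparison map for a five lemma to act on. The paper settles this with the same $Y$ and $K$: capping with $i_{2*}[M_\beta]$ and with $i_{1*}[M]$ differ by $-\cap g_*[K]=g_*(g^{*}(-)\cap[K])$. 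This is zero on $H^1$ because $K$ has no $3$-cells.

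Your ``duality symmetry'' can be made into a genuine alternative, but it must actually be stated. Suppose $H^4\to H_0$, $H^3\to H_1$ (both sides zero for infinite $\pi$ once the chain complex is unchanged) and $H^2\to H_2$ are isomorphisms. Then the mapping cone $D$ of $\varphi_0=-\cap[M_\beta]$ is acyclic in degrees $\le 2$. Hence $D$ is chain equivalent to a complex of projectives concentrated in degrees $\ge 3$, so $H^j(D)=0$ for $j\le 2$. The symmetric structure $\varphi_0\simeq\pm\varphi_0^{*}$ gives $H_j(D)\cong H^{5-j}(D)$, so $D$ is acyclic. That route avoids the auxiliary complex $K$ and treats finite and infinite $\pi$ uniformly, whereas the paper cites Hambleton--Kreck for finite groups.
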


As a corollary to this, we obtain the classification of polarized
homotopy types of Poincar\'e $4$-complexes, in the following sense.
Two Poincar\'e $4$-complexes $M$ and $M'$ with isomorphic quadratic
$2$-types are said to be \emph{homotopy equivalent over $B$} if
there is a homotopy equivalence $g \colon M \to M'$ such that $f'\circ g =
f$, where $f \colon M \to B$ and $f' \colon M \to B$ are the
maps to the (common) Postnikov $2$-type $B$.

\begin{corollary}
  \label{realization-cor}
  Let $M$ be a Poincar\'e $4$-complex with fundamental group $\pi$ and
  orientation character $w$, and let $B$ be the Postnikov $2$-type of
  $M$. Assume that, for every Poincar\'e $4$-complex $M'$ with the
  same quadratic $2$-type as $M$, condition \eqref{diff} of HKPR
  holds. Then there is a bijection 
  \[
    \mathcal{S}_4^{PD}(Q(M)) \cong \ker(\mathord{\ev^{*}} \circ \mathcal{B}_{H_2(B; \mathbb{Z}\pi)})/\ker\varphi_B,
  \]
  where $\mathcal{S}_4^{PD}(Q(M))$ is the set of homotopy types over $B$ of
  Poincar\'e $4$-complexes with the same quadratic $2$-type as $M$.
\end{corollary}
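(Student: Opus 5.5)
We build a map from the kernel to $\mathcal{S}_4^{PD}(Q(M))$ using \cref{realization-intr}, show it is surjective using condition \eqref{diff}, and identify its fibres with cosets of $\ker\varphi_B$ using the HKPR classification over $B$. That classification says that two Poincar\'e $4$-complexes $M_1, M_2$ with $3$-connected maps $f_i\colon M_i\to B$ are homotopy equivalent over $B$ if and only if $(f_1)_*[M_1]=(f_2)_*[M_2]\in H_4(B;\mathbb{Z}^w)$. We take this as the standard input: it is the fact behind the HKPR criteria, coming from the fact that the fundamental class in $H_4(B;\mathbb{Z}^w)$ of the $3$-connected map determines the polarized homotopy type.

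\emph{Step 1 (the map).} Define
\[
  \Psi\colon \ker(\ev^{*}\circ\mathcal{B}_{H_2(B;\mathbb{Z}\pi)}) \to \mathcal{S}_4^{PD}(Q(M))
\]
by sending $\beta$ to the class of $(M_\beta,f_\beta)$ supplied by \cref{realization-intr}. This is well defined: any two complexes produced for $\beta$ have the same image $f_*[M]+\varphi_B(\beta)$ of the fundamental class, so by the polarized classification above they are homotopy equivalent over $B$.

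\emph{Step 2 (fibres).} Since $\varphi_B$ is a homomorphism, $\Psi(\beta)=\Psi(\beta')$ if and only if $\varphi_B(\beta)=\varphi_B(\beta')$, that is, $\beta-\beta'\in\ker\varphi_B$. We must check that $\ker\varphi_B$ is a subgroup of the kernel of $\ev^{*}\circ\mathcal{B}$, so that the quotient makes sense. This follows from commutativity of the square: $\ev^{*}\mathcal{B}(\beta)=\Theta_B\varphi_B(\beta)=0$ whenever $\varphi_B(\beta)=0$. Hence $\Psi$ descends to an injection from $\ker(\ev^{*}\circ\mathcal{B})/\ker\varphi_B$.

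\emph{Step 3 (surjectivity).} Let $M'$ have the same quadratic $2$-type, with $f'\colon M'\to B$. By hypothesis, $f'_*[M']-f_*[M]=\varphi_B(\beta)$ for some $\beta\in\mathbb{Z}^w\otimes H_4(B;\mathbb{Z}\pi)$. We must show $\beta$ lies in $\ker(\ev^{*}\circ\mathcal{B})$. By commutativity this is the statement $\Theta_B(f'_*[M'])=\Theta_B(f_*[M])$. Now $\Theta_B$ sends a fundamental class to the hermitian form on $H^2(B;\mathbb{Z}\pi)\cong H^2(M;\mathbb{Z}\pi)$ obtained by cap product, and this form is determined by the equivariant intersection form on $\pi_2$. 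Because the isomorphism of quadratic $2$-types is realized by the identification of $B$, the two forms agree, so $\beta$ is in the kernel. Then $\Psi(\beta)$ has the same fundamental class image as $(M',f')$, and the polarized classification gives $M'\simeq M_\beta$ over $B$.

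The main obstacle is Step 3. We must verify that the identification $H^2(M;\mathbb{Z}\pi)\to\Her^w$ under $\Theta_B$ really depends only on $\lambda_M$, and handle the choice of $f'$: after composing with a self-equivalence of $B$ realizing the quadratic $2$-type isomorphism, $f'$ must be arranged so that the cohomology forms match on the nose rather than only up to isomorphism. We would do this by fixing $f'$ as that composite and checking naturality of $\Theta_B$ under it.
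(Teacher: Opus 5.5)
Your proposal is correct and is essentially the paper's argument: the paper's proof is the one-line observation that \cref{realization-intr} (realizing each $\beta$ in the kernel by some $M_\beta$) combined with \cref{bauesbleile} (equal images of fundamental classes in $H_4(B;\mathbb{Z}^w)$ give a homotopy equivalence over $B$) yields the bijection, with condition \eqref{diff} supplying surjectivity as in your Step 3. Your extra checks are details the paper leaves implicit, namely that $\ker\varphi_B\subseteq\ker(\ev^{*}\circ\mathcal{B}_{H_2(B;\mathbb{Z}\pi)})$ by commutativity of the square, and that $\Theta_B(f'_*[M'])=\Theta_B(f_*[M])$ once $f'$ realizes the isometry of quadratic $2$-types; the latter does go through, because $\Theta_B(f_*[M])$ is the pullback of $\lambda_M$ along the isomorphism $\alpha\mapsto f^{*}\alpha\cap[M]$, and the corresponding lifts for $M$ and $M'$ differ only by elements of the radical of the common form.
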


In \cref{sec-ker,sec-diff} we will prove some results which aid the
verification of the HKPR criteria for groups with $H^2(\pi; \mathbb{Z}\pi) = H^3(\pi;
\mathbb{Z}\pi) = 0$ under the assumption that the image $c_{*}[M] \in H_4(\pi; \mathbb{Z}^w)$
under the classifying map $M \to B\pi$ vanishes. \cref{sec-ker} contains
results which simplify condition \eqref{kers}, while in
\cref{sec-diff} we prove that condition \eqref{diff} is actually
satisfied, provided that $\pi$ is $1$-ended or that the homology with
certain coefficients $H_2(\pi; A^w)$ is trivial or cyclic of prime
order.

The rest of the paper is devoted to proving \cref{mainth}, with
\cref{sec-prop} concerned with some basic properties of the groups
$\mathbb{Z}/p\rtimes\mathbb{Z}$ and \cref{sec1,sec2} proving the theorem for stably-free and
non-stably-free $\pi_2(M)$ respectively.

\subsection*{Conventions and notation}
All Poincar\'e complexes are assumed to be finite.

For $a,b \in A$ we will write $a\sq b$ for the element $a\otimes b +
b\otimes a \in A\otimes A$ for the sake of brevity.

All $\mathbb{Z}\pi$-modules will be assumed to be finitely generated. They will
be left modules by default and unless specifically stated
otherwise. Homology and cohomology of groups and of spaces with local
coefficients takes coefficients in left modules. That is, homology is
formed by considering a free resolution of right modules and tensoring
with the coefficient module. Cohomology is formed by considering a
resolution of left modules.

Whenever a left module appears in a place where the context requires a
right module, this is accomplished using the involution on $\mathbb{Z}\pi$
sending $g\mapsto w(g)g^{-1}$. This allows for example forming the tensor
product over $\mathbb{Z}\pi$ of two left $\mathbb{Z}\pi$-modules. Similarly dual modules
will always be considered as left modules using this procedure.

The only exception to this rule are the symbols $\mathbb{Z}$ and $\mathbb{Z}^w$---the
former will always denote the trivial left or right module, while the
latter will always denote the $w$-twisted left or right module. This
results in some artifacts, e.g.\ when turning a short exact sequence
of left modules $A \to B \to \mathbb{Z}$ into a short exact sequence of right
modules, we obtain $A \to B \to \mathbb{Z}^w$.

The $\mathbb{Z}$-tensor product of left $\mathbb{Z}\pi$-modules $A\otimes B$ is a left
$\mathbb{Z}\pi$-module with the diagonal action. As per the conventions described
above, we can also form the $\mathbb{Z}\pi$-tensor product $A\otimes_{\mathbb{Z}\pi}B$, and we
have the isomorphism $A\otimes_{\mathbb{Z}\pi}B \cong \mathbb{Z}^w\otimes_{\mathbb{Z}\pi}(A\otimes B)$, where the twisting
of $\mathbb{Z}$ is required to account for the twisting in the involution when
turning $A$ into a right module.

\subsection*{Acknowledgements}
The author would like to thank Daniel Kasprowski for numerous
invaluable discussions and Maximilian Hans for providing helpful
feedback on the manuscript.

\section{Preliminaries}

In this section we lay out some preliminaries. In \cref{gamma} we
record some important information about Whitehead's universal
quadratic functor; in \cref{method} we explain the method of homotopy
classification we employ, viz.\ the one based on the criterion of
HKPR~\cite{KPR} extending the work of Hambleton and Kreck~\cite{HK};
and in \cref{form} we define the form $\lambda_M^{\mathbb{Z}/p}$.

\subsection{Whitehead's $\Gamma$ functor}
\label{gamma}

In~\cite{whitehead}, Whitehead defined the \emph{universal quadratic
  functor} on abelian groups. For torsion-free groups, $\Gamma(A)$ has the
description as the subgroup of $A\otimes A$ generated by $\{\,a \otimes a \mid a \in
A\,\}$ \cite[Sections~2.2 and 2.3]{baues-pirashvili}. If $A$ is free
abelian and $\mathfrak{B}$ is a basis, then $\Gamma(A)$ is free abelian with basis
$\{\,b \otimes b,\,b \otimes b' + b' \otimes b \mid b, b' \in \mathfrak{B}\,\}$. If $A$ is an
$R$-module, then $\Gamma(A)$ inherits an $R$-module structure from the
diagonal action on $A\otimes A$. If $B$ is a $3$-coconnected space with
fundamental group $\pi$, then we have that $\Gamma(H_2(B; \mathbb{Z}\pi)) \cong H_4(B; \mathbb{Z}\pi)$
as a direct consequence of~\cite[Sections~10 and 13]{whitehead}, and
we will often implicitly use this identification.

Hambleton and Kreck \cite{HK} investigated the $\mathbb{Z}\pi$-module structure
of $\Gamma(\mathbb{Z}\pi)$ for a group $\pi$, and this will be useful to us later:

\begin{lemma}[{\cite[Lemma~2.2]{HK}}]
  \label{Gamma-zpi-free}
  Let $\pi$ be a group with no $2$-torsion. Then $\Gamma(\mathbb{Z}\pi)$ is a free
  $\mathbb{Z}\pi$-module with basis given by $\{1 \otimes 1, 1 \otimes g + g \otimes 1 \mid g \in G \}$,
  where $G$ is the set containing exactly one of $g$ or $g^{-1}$ for
  every nonidentity $g \in \pi$.
\end{lemma}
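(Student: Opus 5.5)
Since $\mathbb{Z}\pi$ is free abelian on the basis $\pi$, the description above shows that $\Gamma(\mathbb{Z}\pi)$ is free abelian on $\{\,g\otimes g,\ g\otimes h+h\otimes g \mid g\neq h \in \pi\,\}$. The plan is to study how $\pi$ acts, through the diagonal action, on this $\mathbb{Z}$-basis. The key observation is that $\pi$ permutes the basis: $k\cdot(g\otimes g)=kg\otimes kg$ and $k\cdot(g\sq h)=kg\sq kh$. So $\Gamma(\mathbb{Z}\pi)\cong\mathbb{Z}[X]$ as a $\mathbb{Z}\pi$-module, where $X$ is the $\pi$-set of diagonal elements $g$ together with unordered pairs $\{g,h\}$ with $g\neq h$. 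A permutation module $\mathbb{Z}[X]$ is free exactly when the action on $X$ is free, and then any set of orbit representatives is a basis. The proof therefore reduces to computing orbits and stabilizers in $X$.

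First, the diagonal elements form a single free orbit, namely that of $1\otimes 1$, since $k\cdot(1\otimes 1)=k\otimes k$ and $k\mapsto k\otimes k$ is injective.

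Next, consider the unordered pairs. Every pair $\{g,h\}$ is a translate of $\{1,g^{-1}h\}=g^{-1}\cdot\{g,h\}$. Moreover, $\{1,x\}$ and $\{1,y\}$ (with $x,y\neq1$) lie in the same orbit iff there is $k$ with $\{k,kx\}=\{1,y\}$. This holds iff either $k=1$ and $x=y$, or $k=y$ and $yx=1$, that is, $y=x^{-1}$. Hence the orbits of pairs correspond bijectively to the classes $\{x,x^{-1}\}$ of nonidentity elements. Since $\pi$ has no $2$-torsion, $x\neq x^{-1}$, so each class has exactly two elements, and the set $G$ picks one representative $\{1,g\}$ per orbit.

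The main point, and the only place the $2$-torsion hypothesis really enters, is freeness of these orbits. Suppose $k\cdot\{1,g\}=\{1,g\}$. Either $k=1$, or $k=g$ and $kg=1$. In the second case $g^2=1$ with $g\neq1$, which is excluded. So every stabilizer is trivial. Combining the two orbit computations, $\Gamma(\mathbb{Z}\pi)=\mathbb{Z}\pi\cdot(1\otimes1)\oplus\bigoplus_{g\in G}\mathbb{Z}\pi\cdot(1\sq g)$, with each summand free of rank one. This proves the lemma.
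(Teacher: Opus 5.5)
Your proof is correct and complete. The group $\pi$ permutes the standard $\mathbb{Z}$-basis $\{g\otimes g,\ g\sq h\}$ of $\Gamma(\mathbb{Z}\pi)$, and its orbits are represented by $1\otimes 1$ and by $1\sq g$ for $g\in G$. The absence of $2$-torsion is exactly what makes every stabilizer trivial.

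The paper gives no proof of its own here; it simply imports the statement from Hambleton--Kreck. Your orbit-and-stabilizer argument is the standard direct proof of that lemma.

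One minor remark: you only use one direction of your claim that a permutation module is free exactly when the action is free. Namely, a free action gives a free module with any set of orbit representatives as a basis. The converse you state is never needed.
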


The following lemma will help us use the form $\lambda_M^{\mathbb{Z}/p}$ to prove
\cref{mainth}.

\begin{lemma}
  \label{whitehead-eval}
  Let $B$ be a $3$-coconnected space with $\pi_2(B)$ free as an abelian
  group. Let $\alpha,\beta$ be elements of $H^2(\wt B; \mathbb{Z}/p) \cong \Hom_{\mathbb{Z}}(\pi_2(B),
  \mathbb{Z}/p)$. Then the map $\beta\cap(\alpha\cap-) \colon H_4(\wt B; \mathbb{Z}) \cong \Gamma(\pi_2(B)) \to \mathbb{Z}/p$
  is given by $a\otimes a \mapsto \alpha(a)\beta(a)$.
\end{lemma}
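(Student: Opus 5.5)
We reduce the statement to a computation in a space where the cap products can be evaluated explicitly, using naturality of cap products and of Whitehead's identification $H_4(\wt B;\mathbb{Z}) \cong \Gamma(\pi_2(B))$.

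First, $\wt B$ is simply connected and $3$-coconnected, so it is an Eilenberg--MacLane space $K(\pi_2(B),2)$. Let $a \in \pi_2(B)$ and represent it by a map $a\colon S^2 \to \wt B$. Whitehead's construction extends this to a map $\hat a\colon \mathbb{CP}^2 \to \wt B$, since $\mathbb{CP}^\infty = K(\mathbb{Z},2)$ maps to $K(\pi_2(B),2)$ realizing the homomorphism $\mathbb{Z} \to \pi_2(B)$, $1 \mapsto a$. Restricting to $\mathbb{CP}^2$ gives $\hat a$. Under the identification of $H_4$ with $\Gamma$, the element $a \otimes a = \gamma(a)$ is exactly $\hat a_*[\mathbb{CP}^2]$. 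This follows from naturality of $\Gamma(\pi_2) \cong H_4$ in $K(-,2)$, together with the fact that for $K(\mathbb{Z},2)=\mathbb{CP}^\infty$ the generator $\gamma(1)=1\otimes 1$ of $\Gamma(\mathbb{Z}) \cong \mathbb{Z}$ corresponds to $[\mathbb{CP}^2]$, the generator of $H_4(\mathbb{CP}^\infty)$. The only delicate point is the sign: the identification is chosen so that $\gamma(1)\mapsto[\mathbb{CP}^2]$, or equivalently the Pontryagin square or cup square of the generator evaluates to $1$. I would fix this convention explicitly, citing Whitehead's Sections 10 and 13.

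Next, apply naturality of the cap product:
\[
\beta\cap(\alpha\cap \hat a_*[\mathbb{CP}^2]) = \langle \hat a^*\alpha \cup \hat a^*\beta, [\mathbb{CP}^2]\rangle,
\]
up to the standard sign convention, which is trivial here because the classes have even degree. Now $\hat a^*\alpha \in H^2(\mathbb{CP}^2;\mathbb{Z}/p)$ equals $\alpha(a)x$, where $x$ is the mod $p$ reduction of the generator dual to $\mathbb{CP}^1$; indeed its value on $[\mathbb{CP}^1] = a_*[S^2]$ is $\alpha(a)$. Similarly $\hat a^*\beta = \beta(a)x$. Since $x^2$ evaluates to $1$ on $[\mathbb{CP}^2]$, the result is $\alpha(a)\beta(a)$, as claimed.

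Finally, since $\pi_2(B)$ is free abelian, $\Gamma(\pi_2(B))$ is spanned by elements of the form $a\otimes a$: for a basis we have $b\otimes b' + b'\otimes b = (b+b')^{\otimes 2} - b^{\otimes2} - b'^{\otimes 2}$. Hence the map is determined by its values on these elements, and the formula on them is exactly the statement. We also use the identification $H^2(\wt B;\mathbb{Z}/p)\cong \Hom(\pi_2(B),\mathbb{Z}/p)$, which comes from Hurewicz and the universal coefficient theorem, with no Ext term because $H_1 = 0$. The main obstacle is the first step, namely justifying that Whitehead's isomorphism sends $a\otimes a$ to $\hat a_*[\mathbb{CP}^2]$ with the correct sign. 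I expect to handle this by naturality, reducing to the universal case $K(\mathbb{Z},2)$.
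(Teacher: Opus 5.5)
Your proposal is correct, but it takes a different route from the paper. The paper computes nothing itself. It quotes Whitehead's formula for integral coefficients and transfers it to $\mathbb{Z}/p$ by naturality of the cap product in the coefficients. To do so it has to lift $\alpha,\beta$ to integral classes, i.e.\ show that reduction $H^2(\wt B;\mathbb{Z})\to H^2(\wt B;\mathbb{Z}/p)$ is surjective. This is where both hypotheses are used: once $H_3(\wt B;\mathbb{Z})=0$ (Hurewicz), the cokernel lies in $\Ext^1_{\mathbb{Z}}(\pi_2(B),\mathbb{Z})$, which vanishes because $\pi_2(B)$ is free. You instead reprove the formula from the universal example. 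Naturality of $H_4(K(-,2);\mathbb{Z})\cong\Gamma(-)$ along $\mathbb{CP}^\infty\to\wt B$ identifies $a\otimes a$ with $\hat a_*[\mathbb{CP}^2]$, automatically up to a universal sign since $\Gamma(\mathbb{Z})$ and $H_4(\mathbb{CP}^\infty;\mathbb{Z})$ are both infinite cyclic. Naturality of the cap product then reduces everything to $\langle x^2,[\mathbb{CP}^2]\rangle=1$ with $\mathbb{Z}/p$ coefficients. Your argument is self-contained and works directly with $\mathbb{Z}/p$ coefficients without any lifting. It also never uses that $\pi_2(B)$ is free, except to justify the notation $a\otimes a$ for $\gamma(a)$; the paper's proof is shorter but rests entirely on the citation. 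The remaining sign is a convention for Whitehead's isomorphism in both approaches (the paper simply inherits Whitehead's). It is harmless for the later application, where only nonvanishing in $\mathbb{Z}/p$ matters. Your final spanning paragraph is not needed for the statement as phrased, and in any case $\Gamma$ of any abelian group is generated by the elements $\gamma(a)$, so freeness is not needed there either.
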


\begin{proof}
  The corresponding statement for $\mathbb{Z}$ coefficients can be found in
  \cite[page~96]{whitehead}, and we can deduce the lemma from this
  using that the cap product is natural in the coefficients as long as
  the reduction of coefficients map $H^2(\wt B; \mathbb{Z}) \to
  H^2(\wt B; \mathbb{Z}/p)$ is surjective. To see this, first note that
  $H_3(\wt B; \mathbb{Z}) = 0$ by the Hurewicz theorem. The map in question
  then has cokernel in $\Ext_{\mathbb{Z}}^1(H_2(\wt B; \mathbb{Z}), \mathbb{Z})$ by the universal
  coefficient theorem, which vanishes as $H_2(\wt B; \mathbb{Z}) \cong \pi_2(B)$ is
  free by assumption.
\end{proof}

Let $\pi$ be a group and $w\colon \pi \to \{\pm1\}$ be a homomorphism. For a
$\mathbb{Z}\pi$-module $A$ which is torsion-free as an abelian group, there is a
homomorphism
\[
  \mathcal{B}_A \colon \mathbb{Z}^w \otimes_{\mathbb{Z}\pi}\Gamma(A) \to \Her^w(A^{*})
\]
induced by $a \otimes b \mapsto ((f, g) \mapsto \ol{f(a)}g(b))$ as
in~\cite[Section~7]{hillman}. Here and in the sequel we use $A^{*}$ to
denote $\Hom_{\mathbb{Z}\pi}(A, \mathbb{Z}\pi)$ turned into a left $\mathbb{Z}\pi$-module using the
involution on $\mathbb{Z}\pi$ given by $g \mapsto w(g)g^{-1}$ for $g \in \pi$, and we use
$\Her^w(A)$ to denote the group of Hermitian forms on a module $A$
with the same involution.

And because it will be useful to us later, we give the following
lemma, a consequence of a theorem of Hillman:

\begin{lemma}[{\cite[Theorem~1]{hillman-2}}]
  \label{B-inj}
  Let $\pi$ be a group and $w \colon \pi \to \{\pm1\}$ be a homomorphism such
  that $w(g) = 1$ for all $g \in \pi$ of order $2$. For a finitely
  generated projective $\mathbb{Z}\pi$-module $A$, the map $\mathcal{B}_A$ is injective.
\end{lemma}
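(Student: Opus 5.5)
The plan is to reduce to the case of a finitely generated free module, where injectivity of $\mathcal{B}_F$ is exactly the content of \cite[Theorem~1]{hillman-2}; that theorem is where the hypothesis on elements of order $2$ is used. If Hillman's theorem already covers projective modules directly, there is nothing more to do. Otherwise, choose a finitely generated projective $C$ with $F := A\oplus C$ free of finite rank. Let $i\colon A\to F$ be the inclusion and $r\colon F\to A$ the projection, so that $r\circ i=\id_A$.

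First I will check that $\mathcal{B}$ is natural in $A$. For a $\mathbb{Z}\pi$-homomorphism $h\colon A\to F$, the dual $h^{*}\colon F^{*}\to A^{*}$ induces a map $(h^{*})^{\#}\colon\Her^w(A^{*})\to\Her^w(F^{*})$ given by pulling back forms along $h^{*}$. Naturality means
\[
  \mathcal{B}_F\circ(\id\otimes\Gamma(h)) = (h^{*})^{\#}\circ\mathcal{B}_A .
\]
It suffices to verify this on generators $a\otimes b$. Both sides send $a\otimes b$ to the form $(f,g)\mapsto\ol{f(h(a))}\,g(h(b))$ for $f,g\in F^{*}$, because $h^{*}f=f\circ h$. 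This is immediate from the defining formula of $\mathcal{B}$.

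Next, functoriality of $\Gamma$ and of $\mathbb{Z}^w\otimes_{\mathbb{Z}\pi}-$ gives $(\id\otimes\Gamma(r))\circ(\id\otimes\Gamma(i))=\id$. Hence
\[
  \id\otimes\Gamma(i)\colon \mathbb{Z}^w\otimes_{\mathbb{Z}\pi}\Gamma(A)\to\mathbb{Z}^w\otimes_{\mathbb{Z}\pi}\Gamma(F)
\]
is split injective. Now take $x\in\ker\mathcal{B}_A$. By naturality, $\mathcal{B}_F((\id\otimes\Gamma(i))(x))=(i^{*})^{\#}(0)=0$. Injectivity of $\mathcal{B}_F$, which is Hillman's theorem for the free module $F$, then gives $(\id\otimes\Gamma(i))(x)=0$, and split injectivity gives $x=0$.

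The only point needing care, and the one I expect to be the main obstacle, is the bookkeeping with the involution and $w$-twists. One has to confirm that $\mathcal{B}$ is well defined on $\mathbb{Z}^w\otimes_{\mathbb{Z}\pi}\Gamma(-)$ and that the pullback map $(h^{*})^{\#}$ really lands in $w$-Hermitian forms. Both follow from the same computation as in \cite[Section~7]{hillman}, since all maps involved are $\mathbb{Z}\pi$-linear.
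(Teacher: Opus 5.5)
Your argument is correct: naturality of $\mathcal{B}$ together with the retraction $r\circ i=\id_A$ reduces injectivity of $\mathcal{B}_A$ to injectivity of $\mathcal{B}_F$ for the free module $F=A\oplus C$. The paper gives no separate proof and simply cites Hillman's theorem, so your free-to-projective reduction is just the routine step implicit in its description of the lemma as a consequence of that theorem.
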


\begin{remark}
  \label{tors-free}
  This implies that $\mathbb{Z}^w \otimes_{\mathbb{Z}\pi}\Gamma(A)$ is torsion free, since
  $\Her^w(A^{*})$ is always torsion free.
\end{remark}

We end this section with the following, which describes the behavior
of the $\Gamma$ functor with respect to short exact sequences and thus
helps with calculations involving $\Gamma$ groups.

\begin{lemma}[cf.\ {\cite[Lemma~4]{bauer}}]
  \label{exact-seqs}
  Let $\pi$ be a group and suppose $0 \to A \to B \to C \to 0$ is a short exact
  sequence of $\mathbb{Z}\pi$-modules which are torsion-free as abelian
  groups. The induced maps on $\Gamma$ groups fit into a commutative
  diagram of $\mathbb{Z}\pi$-modules with exact rows and columns
  \begin{equation*}
    \begin{tikzcd}[column sep=small, row sep=scriptsize]
      & 0 \arrow[d]
      & 0 \arrow[d]
      & 
      & \\
      & \Gamma(A) \arrow[r, equals] \arrow[d]
      & \Gamma(A) \arrow[d]
      & 
      & \\
      0 \arrow[r]
      & K \arrow[r] \arrow[d, "f"]
      & \Gamma(B) \arrow[r] \arrow[d]
      & \Gamma(C) \arrow[r] \arrow[d, equals]
      & 0 \\
      0 \arrow[r]
      & A\otimes_{\mathbb{Z}}C \arrow[r, "g"] \arrow[d]
      & D \arrow[r] \arrow[d]
      & \Gamma(C) \arrow[r]
      & 0. \\
      &0&0&
      &
    \end{tikzcd}
  \end{equation*}

  If $A,B,C$ are free as abelian groups and $\{a_i\}$ and $\{a_i,
  c_j\}$ are bases for $A$ and $B$ respectively, then $\{a_i \otimes a_i, a_i
  \sq a_k, a_i \sq c_j\}$ is a basis for $K$ and the map $f$ is defined by
  sending $a_i \sq c_j \mapsto a_i  \otimes c_j$ and all other basis elements to
  $0$, while the map $g$ is defined by sending $a_i \otimes c_j \mapsto a_i \sq c_j
  \in D = \Gamma(B)/\Gamma(A)$.
\end{lemma}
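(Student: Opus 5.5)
We prove \cref{exact-seqs} by constructing the diagram directly and then checking exactness, using the functoriality of $\Gamma$ and its description as a subgroup of the tensor square. Since all three modules are torsion-free, $\Gamma(X)$ may be taken to be the subgroup of $X\otimes X$ generated by the squares $x\otimes x$. The maps $A\to B\to C$ then induce $\Gamma(A)\to\Gamma(B)\to\Gamma(C)$, and these are $\mathbb{Z}\pi$-equivariant because the action on each $\Gamma$ is the diagonal one.

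\textbf{The objects and maps.} We define $K:=\ker(\Gamma(B)\to\Gamma(C))$ and $D:=\Gamma(B)/\Gamma(A)$.
\begin{enumerate}[(i)]
\item The map $\Gamma(B)\to\Gamma(C)$ is surjective, since $\Gamma(C)$ is generated by squares $c\otimes c$ and each lifts to $b\otimes b$. This gives the middle row.
\item The composite $\Gamma(A)\to\Gamma(B)\to\Gamma(C)$ is zero, so $\Gamma(A)\subseteq K$. This gives the upper left square and the left column's first map.
\item The right map $D\to\Gamma(C)$ is induced by (i) and (ii), and its kernel is $K/\Gamma(A)$.
\item The map $f\colon K\to A\otimes_{\mathbb{Z}}C$ is the restriction of the composite $B\otimes B\to B\otimes C$ (apply $B\to C$ in the second factor), corestricted to $A\otimes C$. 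This corestriction is legitimate: if $x\in K$, then its image in $C\otimes C$ vanishes, and by flatness $A\otimes C=\ker(B\otimes C\to C\otimes C)$.
\item The map $g\colon a\otimes c\mapsto [a\otimes b+b\otimes a]$, with $b$ any lift of $c$, is well defined. Changing the lift by $a'\in A$ changes the element by $a\sq a'$, which lies in $\Gamma(A)$. It is bilinear and equivariant.
\end{enumerate}
Commutativity of the lower left square amounts to checking that $g\circ f$ equals the quotient map $K\to D$, which we verify on generators.

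\textbf{Exactness via bases.} We first reduce to the free case. The modules are torsion-free abelian groups, and the constructions commute with filtered colimits, so it is enough to treat the case where $A$, $B$, $C$ are free abelian. This case is also the one the lemma's basis statement concerns. Choose a basis $\{a_i\}$ of $A$ and extend it by lifts $c_j$ of a basis of $C$ to a basis of $B$; this is possible because $C$ is free, so the sequence splits over $\mathbb{Z}$. By the basis description of $\Gamma$ for free abelian groups recalled in \cref{gamma}, $\Gamma(B)$ has basis
\[
\{\,a_i\otimes a_i,\ a_i\sq a_k,\ a_i\sq c_j,\ c_j\otimes c_j,\ c_j\sq c_l\,\}.
\]
The map to $\Gamma(C)$ sends the last two families bijectively onto a basis of $\Gamma(C)$ and kills the first three. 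Hence $K$ has basis $\{a_i\otimes a_i,\ a_i\sq a_k,\ a_i\sq c_j\}$, which is the claimed basis. Applying the definition of $f$ in (iv), $f(a_i\sq c_j)=a_i\otimes c_j$, while $f$ vanishes on the pure-$A$ basis elements. So $f$ is surjective with kernel exactly $\Gamma(A)$, which proves the left column exact.

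Then $g$ is the isomorphism $A\otimes C\cong K/\Gamma(A)$ followed by the inclusion into $D$, and it has the stated formula $a_i\otimes c_j\mapsto a_i\sq c_j$. Exactness of the bottom row follows from (iii). The middle column is exact by the definition of $D$.

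The main obstacle is the well-definedness and equivariance of $f$ and $g$ independently of the choice of basis. Since only $\mathbb{Z}$-splittings are chosen, equivariance has to be checked intrinsically. Defining both maps by the basis-free formulas in (iv) and (v) handles this, and the bases are then used only to verify exactness.
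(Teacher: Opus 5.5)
Your proposal is correct and follows the same route as the paper: reduce to the free abelian case via filtered colimits, then verify exactness on the explicit bases of $\Gamma(B)$. Your basis-free definitions of $f$ and $g$ add a detail the paper leaves implicit. They show the maps are $\mathbb{Z}\pi$-equivariant, even though the bases and the colimit reduction only see the underlying abelian groups.
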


\begin{proof}
  When $A,B,C$ are free abelian, the lemma is easily proved by
  considering bases. The general case follows from this since short
  exact sequences, tensor products, and the $\Gamma$ functor all commute
  with filtered colimits \cite[Section~9]{whitehead} and a
  torsion-free abelian group is the filtered colimit of its finitely
  generated free abelian subgroups.
\end{proof}

\begin{remark}
  \label{dsum-rem}
  If the exact sequence $0 \to A \to B \to C \to 0$ splits, then so do the
  $\Gamma$-functor sequences, and we obtain $\Gamma(A \oplus C) \cong \Gamma(A) \oplus \Gamma(C) \oplus A
  \otimes_{\mathbb{Z}} C$. We also have that the map $\mathcal{B}_{A \oplus C}$ splits as a direct
  sum as follows:
  \begin{equation*}
    \begin{tikzcd}[column sep=small]
      \mathbb{Z}^w \otimes_{\mathbb{Z}\pi} \Gamma(A) \arrow[r, phantom, "\oplus"] \arrow[d, "\mathcal{B}_A"]
      & \mathbb{Z}^w \otimes_{\mathbb{Z}\pi} \Gamma(C) \arrow[r, phantom, "\oplus"] \arrow[d, "\mathcal{B}_C"]
      & A \otimes_{\mathbb{Z}\pi} C \arrow[d, "a \otimes c \mapsto (f \mapsto (g \mapsto
      \ol{g(a)}f(c)))"] \\
      \Her^w(A^{*}) \arrow[r, phantom, "\oplus"]
      & \Her^w(C^{*}) \arrow[r, phantom, "\oplus"]
      & \Hom_{\mathbb{Z}\pi}(C^{*}, A^{**}),
    \end{tikzcd}
  \end{equation*}
  since $\Her^w((A \oplus C)^{*}) \cong \Her^w(A^{*}) \oplus \Her^w(C^{*}) \oplus
  \Hom_{\mathbb{Z}\pi}(C^{*}, A^{**})$. Here we have used that $\mathbb{Z}^w\otimes_{\mathbb{Z}\pi}(A\otimes C) \cong
  A\otimes_{\mathbb{Z}\pi}C$, where the tensor product $A \otimes_{\mathbb{Z}\pi} C$ is formed by
  considering $A$ as a right module using the involution on $\mathbb{Z}\pi$; see
  our conventions.
\end{remark}

\subsection{Method of homotopy classification}
\label{method}
The method of homotopy classification we employ is based on the
following result of Baues and Bleile extending \cite[Theorem~1.1]{HK}:

\begin{theorem}[{\cite[Corollary~3.2]{BB}}]
  \label{bauesbleile}
  Let $M$ and $N$ be Poincar\'e $4$-complexes and $B$ be a
  $3$-coconnected CW complex with $3$-connected maps $f \colon M \to B$
  and $g \colon N \to B$, and assume $M$ and $N$ have the same
  orientation character $w$. If $f_{*}([M]) = g_{*}([N]) \in H_4(B;
  \mathbb{Z}^w)$, then $M$ and $N$ are homotopy equivalent.
\end{theorem}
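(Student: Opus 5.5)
This result is quoted from Baues--Bleile, so the plan is to reconstruct their argument; I do not expect to need anything new. The approach is obstruction theory for maps over $B$, in the spirit of Hambleton--Kreck.

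\emph{Step 1: reduce to a map $N\to M^{(3)}$ over $B$.} Since $f$ is $3$-connected and $B$ is $3$-coconnected, we may take $B$ to be the Postnikov $2$-type of $M$, built from $M$ by attaching cells of dimension $\geq 4$ (and then $\geq 5$ to kill higher homotopy). The map $g$ on the $3$-skeleton $N^{(3)}$ can be compressed into $M^{(3)}\cup(\text{4-cells})$. To do better, I would use the Baues--Bleile fundamental triple $(\pi_1,\pi_2,k)$ together with its $4$-dimensional refinement: the quadratic module, i.e.\ the chain complex $C_*(\wt M)$ in degrees $\le 4$ together with $\Gamma(\pi_2)$. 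The obstruction to lifting $g|_{N^{(3)}}$ to a map $h\colon N\to M$ with $f\circ h\simeq g$ lies in $H^4(N;\pi_3(\ldots))$. More precisely, using the exact sequence $\Gamma(\pi_2 M)\to\pi_3(M)\to H_3(\wt M)\to0$, it lies in a group that Poincar\'e duality identifies with $H_0(N;\pi_3$-type coefficients$)$.

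\emph{Step 2: the key identification.} I expect this to be the main obstacle. We must show that the obstruction to extending the map over the top cell of $N$ (so that it is compatible with $B$) is measured exactly by the difference $g_*[N]-f_*h_*[N]\in H_4(B;\mathbb{Z}^w)$, where $h$ is the map on the $3$-skeleton. I would argue as follows. $H_4(B;\mathbb{Z}\pi)\cong\Gamma(\pi_2B)$, and $M\to B$ induces $\pi_3(M)\to\pi_3(M^{(3)}\text{-relative})$ so that the top cell's boundary class in $\pi_3(M^{(3)})$ is determined by its image in $H_4(B,M^{(3)})$. Choosing $h$ on the $3$-skeleton, one can vary it on $3$-cells to change the obstruction, and the freedom exactly accounts for the image of $\varphi$-type elements. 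Hence equality of fundamental classes allows the extension to a degree-one map $h\colon N\to M$ with $f h\simeq g$.

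\emph{Step 3: conclude.} The map $h$ is an isomorphism on $\pi_1$ and $\pi_2$, since both $f$ and $g$ are $3$-connected to $B$. It also satisfies $h_*[N]=[M]$, because $f_*$ is injective on $H_4(-;\mathbb{Z}^w)$ for Poincar\'e complexes whose $2$-type is $B$. A degree-one map of Poincar\'e complexes splits $H_*(\wt N)$ with the kernel as a summand. Since $h$ is an isomorphism on $\pi_2=H_2(\wt{\ })$, Poincar\'e duality shows that the kernel vanishes in all degrees. Therefore $h$ is a $\mathbb{Z}\pi$-homology equivalence, hence a homotopy equivalence by Whitehead.
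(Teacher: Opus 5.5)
The paper does not prove this statement; it quotes it from Baues--Bleile \cite{BB}. Your reconstruction has a genuine gap at the degree-one step.

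\textbf{The injectivity claim in Step 3 is false.} You assert that $f_*\colon H_4(M;\mathbb{Z}^w)\to H_4(B;\mathbb{Z}^w)$ is injective, but this fails whenever $f_*[M]$ has finite order. Take $M=N=L(5,1)\times S^1$. Then $B=B\pi$ and $H_4(\pi;\mathbb{Z})\cong\mathbb{Z}/5$ by \cref{hfour}, so $\mathbb{Z}\cong H_4(M;\mathbb{Z})\to H_4(B;\mathbb{Z})$ has kernel $5\mathbb{Z}$.

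The problem is not only the justification; the conclusion itself can fail. Let $\phi\colon L\to L$ be the identity altered on the top cell by the covering map $S^3\to L$. Then $\phi$ is the identity on $\pi_1$ and has degree $6$. The map $h=\phi\times\id$ satisfies the following:
\begin{itemize}
\item it is a map over $B$ and an isomorphism on $\pi_1$ and $\pi_2$;
\item $f_*h_*[M]=6f_*[M]=f_*[M]$;
\item it acts by $6$ on $H_3(\wt M)\cong\mathbb{Z}$, so it is not a homotopy equivalence.
\end{itemize}
So a lift produced by obstruction theory need not have degree one, and nothing in your Step 2 selects a good one. The expression $g_*[N]-f_*h_*[N]$ in Step 2 is not even defined, since at that point $h$ exists only on $N^{(3)}$.

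\textbf{What Step 2 actually gives.} Made precise, the obstruction to extending over the top cell is a class in $H^4(N;\pi_3(M))\cong H_4(B,M;\mathbb{Z}^w)$; both are the $w$-twisted coinvariants of $\pi_3(M)$. This class equals the image of $g_*[N]$. By exactness of $H_4(M;\mathbb{Z}^w)\to H_4(B;\mathbb{Z}^w)\to H_4(B,M;\mathbb{Z}^w)$, it vanishes as soon as $g_*[N]\in\mathbb{Z}\cdot f_*[M]$. So the existence of a lift uses only a weak consequence of the hypothesis. For an arbitrary lift you only get $h_*[N]-[M]\in\ker f_*$.

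\textbf{The missing ingredient.} You need an argument that the indeterminacy of the lift moves $h_*[N]$ through all of $\ker\bigl(f_*\colon H_4(M;\mathbb{Z}^w)\to H_4(B;\mathbb{Z}^w)\bigr)$, so that some lift has degree one. That indeterminacy consists of changing the lift on $3$-cells by cocycles in $Z^3(N;\pi_3(M))$ and on the top cell by elements of $\pi_4(M)$. In the example, undoing the change on the $3$-cell $e^3\times e^0$ does exactly this. This is the substantive part of the theorem. Once a degree-one lift is in hand, your final Whitehead argument is fine.

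Your injectivity claim does hold when $f_*[M]$ has infinite order. This happens, for instance, whenever $\lambda_M\neq0$, because $\Theta_B(f_*[M])$ is the intersection form and $\Her^w$ is torsion-free. But $\lambda_M=0$ is exactly the situation of \cref{lens}.
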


HKPR's criteria assures, when satisfied, that the equivariant
intersection form determines the image of the fundamental class in the
homology of the Postnikov $2$-type, whence classification via the
quadratic $2$-type follows by \cref{bauesbleile}. The HKPR criteria
are based on the following.

\begin{lemma}[{\cite[Lemma~10]{hillman}}]
  \label{comm-diag}
  Let $B$ be a $3$-coconnected space with fundamental group $\pi$ and
  $H_2(B; \mathbb{Z}\pi)$ torsion-free. Then there exists a commutative diagram
  \begin{equation*}
    \begin{tikzcd}
      \mathbb{Z}^w \otimes_{\mathbb{Z}\pi} H_4(B; \mathbb{Z}\pi) \arrow[r, "\varphi_B"] \arrow[d, "\mathcal{B}_{H_2(B;, \mathbb{Z}\pi)}"]
      & H_4(B; \mathbb{Z}^w) \arrow[d, "\Theta_B"] \\
      \Her^w(H_2(B; \mathbb{Z}\pi)^{*}) \arrow[r, "\ev^{*}"]
      & \Her^w(H^2(B; \mathbb{Z}\pi)),
    \end{tikzcd}
  \end{equation*}
  where the maps involved are the following:
  \begin{itemize}
  \item The map $\varphi_B$ is given by reduction of coefficients.
  \item $\mathcal{B}_{H_2(B; \mathbb{Z}\pi)}$ is the map defined above and
    in~\cite[Section~7]{hillman}, where we have identified $\Gamma(H_2(B;
    \mathbb{Z}\pi)) \cong H_4(B; \mathbb{Z}\pi)$.
  \item $\Theta_B$ is given by $x\mapsto\left( (\alpha, \beta) \mapsto \left\langle \beta, \alpha \cap x \right\rangle
    \right)$.
  \item The map $\ev^{*}$ is induced by the evaluation map $\ev \colon
    H^2(B; \mathbb{Z}\pi) \to H_2(B; \mathbb{Z}\pi)^{*}$.
  \end{itemize}
\end{lemma}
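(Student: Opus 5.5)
The plan is to check commutativity elementwise. Both composites $\ev^{*}\circ\mathcal{B}$ and $\Theta_B\circ\varphi_B$ are additive. Moreover $H_4(B;\mathbb{Z}\pi) = H_4(\wt B;\mathbb{Z}) \cong \Gamma(H_2(B;\mathbb{Z}\pi))$ is generated by elements $a\otimes a$ and $a\sq b$. It therefore suffices to fix $x = 1\otimes \tilde x$ with $\tilde x$ one of these generators, fix $\alpha,\beta\in H^2(B;\mathbb{Z}\pi)$, and show
\[
\Theta_B(\varphi_B(x))(\alpha,\beta) = \mathcal{B}(x)(\ev\alpha,\ev\beta) \in \mathbb{Z}\pi .
\]

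First I will fix chain-level models, writing $C_* = C_*(\wt B)$, a free $\mathbb{Z}\pi$-complex.
\begin{itemize}
\item $H^2(B;\mathbb{Z}\pi)$ is the cohomology of $\Hom_{\mathbb{Z}\pi}(C_*,\mathbb{Z}\pi)$.
\item $\varphi_B$ is induced by $C_*\to \mathbb{Z}^w\otimes_{\mathbb{Z}\pi}C_*$.
\item $\alpha\cap -$ takes $\mathbb{Z}^w\otimes_{\mathbb{Z}\pi}C_4$ to $\mathbb{Z}\pi\otimes_{\mathbb{Z}\pi}C_2 \cong C_2$, using the involution to turn $\mathbb{Z}\pi$ into a right module. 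So $\alpha\cap\varphi_B(x)$ lies in $H_2(B;\mathbb{Z}\pi)$.
\item $\langle\beta,-\rangle$ is then the $\mathbb{Z}\pi$-valued Kronecker pairing $H^2(B;\mathbb{Z}\pi)\times H_2(B;\mathbb{Z}\pi)\to\mathbb{Z}\pi$, which is exactly $\ev\beta$ evaluated on that class.
\end{itemize}

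Writing $\alpha$ as a $\mathbb{Z}\pi$-linear cochain $\alpha(c)=\sum_g \alpha_g(c)\,g$ with $\alpha_g(c)=\alpha_1(g^{-1}c)$, I will unwind the chain-level formula to
\[
\langle\beta,\alpha\cap\varphi_B(x)\rangle=\sum_{g,h}\big(\beta_h\cup\alpha_g\big)$-type terms evaluated on $\tilde x,
\]
up to the involution twist. Here $\alpha_g,\beta_h\in H^2(\wt B;\mathbb{Z})$ are the integral components. This reduces the computation to nonequivariant cup/cap products on the simply connected space $\wt B$.

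The key input is Whitehead's computation, already quoted in the proof of \cref{whitehead-eval} with $\mathbb{Z}$ coefficients: on $H_4(\wt B)\cong\Gamma(\pi_2(B))$, the functional $\beta_h\cap(\alpha_g\cap -)$ sends $a\otimes a\mapsto\alpha_g(a)\beta_h(a)$. By polarization it sends $a\sq b\mapsto \alpha_g(a)\beta_h(b)+\alpha_g(b)\beta_h(a)$. I will then reassemble the components. Since $\alpha_g(a)$ is the $g$-coefficient of $(\ev\alpha)(a)\in\mathbb{Z}\pi$, the double sum becomes $\ol{(\ev\alpha)(a)}\,(\ev\beta)(a)$ for $a\otimes a$, with the analogous symmetric expression for $a\sq b$. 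This is precisely $\mathcal{B}(x)(\ev\alpha,\ev\beta)$ by the defining formula $a\otimes b\mapsto((f,g)\mapsto\ol{f(a)}g(b))$. Finally I will check that the expression depends only on the class of $x$ in $\mathbb{Z}^w\otimes_{\mathbb{Z}\pi}H_4$, by comparing $g\tilde x$ with $w(g)\tilde x$; this is automatic from the $\mathbb{Z}\pi$-linearity of the cochains together with the involution.

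The main obstacle is the bookkeeping in the reassembly step. One has to confirm that the conjugation lands on $\alpha$ rather than $\beta$, and that the $w$-twist in $\mathbb{Z}^w$ exactly compensates the involution $g\mapsto w(g)g^{-1}$ used to make $\alpha\cap$ land in a left module. A second point needs care: $B$ is an infinite complex, so sums such as $\sum_g\alpha_g(a)g$ must be finite. They are, because $\alpha$ is a $\mathbb{Z}\pi$-linear cochain with values in $\mathbb{Z}\pi$, which forces finitely many nonzero components on any finite chain. Torsion-freeness of $H_2(B;\mathbb{Z}\pi)$ is used only so that $\Gamma$ is the subgroup of $\pi_2\otimes\pi_2$ generated by the squares $a\otimes a$, which lets the formulas be checked on these generators.
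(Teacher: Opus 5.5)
The paper gives no argument of its own here. The lemma is quoted from Hillman \cite[Lemma~10]{hillman}, so there is no in-paper proof to compare step by step.

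Your proposal is a correct, self-contained route. Its only non-formal input is Whitehead's formula $\langle u\cup v,\gamma(a)\rangle=u(a)v(a)$ on $H_4(\wt B;\mathbb{Z})\cong\Gamma(\pi_2(B))$. The paper uses this same fact with $\mathbb{Z}/p$ coefficients in \cref{whitehead-eval}. For arbitrary torsion-free $\pi_2(B)$, it follows by naturality from the map $\mathbb{CP}^\infty\to K(\pi_2(B),2)$ representing $a$.

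The chain-level unwinding you defer is cleanest via the projection formula for the covering $\phi\colon\wt B\to B$: $\alpha\cap\phi_*\tilde x=\phi_*(\phi^*\alpha\cap\tilde x)$. Here $\phi^*\alpha$ is an ordinary $\mathbb{Z}\pi$-valued class on the simply connected space $\wt B$, with components $\alpha_g$. So $\phi^*\alpha\cap\tilde x=\sum_g g\otimes(\alpha_g\cap\tilde x)$, a finite sum as you note. Pairing with $\beta$ gives, up to the fixed conventions for $\ev$ and the Kronecker pairing, $\sum_{g,h}\langle\alpha_g\cup\beta_h,\tilde x\rangle\,\ol{g}\,h$. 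Whitehead's formula turns this into $\ol{(\ev\alpha)(a)}\,(\ev\beta)(a)$ when $\tilde x=a\otimes a$.

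The $w$-bookkeeping you flag as the main obstacle is in fact forced. Since $\alpha_g(kc)=\alpha_{k^{-1}g}(c)$, weights of the form $\epsilon(g)g^{-1}h$ respect the relation $k\tilde x\sim w(k)\tilde x$ in $\mathbb{Z}^w\otimes_{\mathbb{Z}\pi}H_4(B;\mathbb{Z}\pi)$ only if $\epsilon=\pm w$.

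Two small corrections:
\begin{enumerate}
\item Your final check that the expression is independent of the representative of $x$ is unnecessary. Both composites are already maps out of $\mathbb{Z}^w\otimes_{\mathbb{Z}\pi}H_4(B;\mathbb{Z}\pi)$, so checking on generators suffices.
\item Torsion-freeness is what makes the defining formula $a\otimes b\mapsto\ol{f(a)}g(b)$ for $\mathcal{B}$ meaningful inside $\pi_2\otimes\pi_2$. It is not needed for $\Gamma$ to be generated by the $\gamma(a)$, which holds for any abelian group.
\end{enumerate}

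Compared with the citation, your route costs some convention-tracking. In exchange it shows directly why $\mathcal{B}$ is the right map and why the conjugation falls on the first variable.
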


\begin{remark}
  \label{pi-2-tors-free}
  Note that $\pi_2(M)$ of a Poincar\'e $4$-complex $M$ is always
  torsion-free (see e.g.\ \cite[Proposition~3.1]{gurjar-pokale}).
  This means we are always free to apply the above lemma to the
  Postnikov $2$-type of $M$. We may do this without comment.
\end{remark}

We end this subsection with the following lemma, which will allow us
to split the proof of \cref{mainth} into two cases according to the
stable isomorphism class of $\pi_2(M)$.

\begin{lemma}
  \label{stab-iso}
  Let $M$ and $M'$ be Poincar\'e $4$-complexes with fundamental group
  $\pi$ and orientation character $w$, along with classifying maps $c_M
  \colon M \to B\pi$ and $c_{M'} \colon M' \to B\pi$. Then $\pi_2(M)$ and
  $\pi_2(M')$ are stably isomorphic as $\mathbb{Z}\pi$-modules if $(c_M)_{*}[M] =
  (c_{M'})_{*}[M'] \in H_4(\pi; \mathbb{Z}^w)/\pm\Aut(\pi)$.
\end{lemma}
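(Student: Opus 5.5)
We prove \cref{stab-iso} by the standard argument comparing the cellular chain complex of the universal cover with a free resolution of $\mathbb{Z}$. Let $C_* = C_*(\wt M)$ be the cellular chain complex of the universal cover, a finite free $\mathbb{Z}\pi$-complex of length $4$. Since $\wt M$ is simply connected, Hurewicz gives $\pi_2(M) \cong H_2(\wt M) = H_2(C_*)$. By Poincar\'e duality, $C_*$ is chain homotopy equivalent to the dual complex $C^{4-*}$ (with $w$-twisted involution), and cap product with $[M]$ realises this equivalence.

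The first step is to identify $\pi_2(M)$ up to stable isomorphism in terms of the $2$-skeleton and duality. Since $H_3(\wt M) \cong H^1_c(\wt M)$ and $H_4$ are controlled by duality, we have the standard exact sequence
\[
0 \to H^2(\pi;\mathbb{Z}\pi) \to \pi_2(M) \to \ker(d_2^*\colon C^2 \to C^3) \to \dots.
\]
More useful is the Hambleton--Kreck description: let $D_*$ be the truncation $C_0 \leftarrow C_1 \leftarrow C_2$, and let $\Omega^3\mathbb{Z}$ denote the third syzygy $\ker(C_2 \to C_1)$ modulo stabilisation. Then duality gives an exact sequence
\[
0 \to \pi_2(M) \to \coker(d_3^*) \to \dots,
\]
and, following \cite{HK}, we obtain that $\pi_2(M)$ is stably determined by a $4$-dimensional algebraic Poincar\'e complex whose class is fixed by the image of the fundamental class in $H_4(\pi;\mathbb{Z}^w)$. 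Concretely, choose a free resolution $F_*\to\mathbb{Z}$ and a chain map $C_* \to F_*$ covering $c_M$. Then $(c_M)_*[M]\in H_4(\pi;\mathbb{Z}^w) = H_4(\mathbb{Z}^w\otimes F_*)$ determines, via cap product, a chain map $F^{4-*}\to F_*$ up to homotopy. The mapping cone, truncated suitably, yields $\pi_2(M)\oplus(\text{free})$ as a homology group or syzygy, depending only on this class.

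The key step is to show that $C_*$ is, up to adding elementary free complexes, the ``algebraic $4$-type'' obtained by gluing the $2$-skeleton of $F_*$ to its dual along the cap product with $c_*[M]$. The $3$-skeleton of $C_*$ agrees with $F_*$ through degree $2$ up to stabilisation, by Schanuel's lemma, since both are partial free resolutions through $C_1$. Dually, the top of $C_*$ agrees with $F^{4-*}$ in degrees $\ge2$, and the connecting data is exactly the chain map $F^{4-*}\to F_*$ representing $\cap\, c_*[M]$. Thus $\pi_2(M) = H_2(C_*)$ is stably isomorphic to $H_2$ of the algebraic mapping cone. Equivalently, it is the kernel in the exact sequence relating $\ker(F_2\to F_1)$ and $\coker(F^1\to F^2)$ via the map induced by $c_*[M]$, and this depends only on the homotopy class of that map, hence only on $c_*[M]$. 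Changing $c_*[M]$ by $\pm$ or by an automorphism of $\pi$ changes the resulting module only by a sign or by twisting the module structure along the automorphism. Since we identify fundamental groups via that automorphism, stable isomorphism is preserved.

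The main obstacle is making rigorous the claim that $C_*$ is stably chain-equivalent to this glued complex, because $C_3\to C_2$ is not part of a resolution and $H_3(C_*)$ may be nonzero. I would handle this with the dual sequence
\[
0\to \pi_2(M)\to \coker(d_3)\ \text{(stably } \Omega\text{-data)}
\]
and Schanuel-type arguments applied at both ends separately, as in \cite[Section~2]{HK}. That argument uses no finiteness of $\pi$, only finite freeness of $C_*$ and Poincar\'e duality.
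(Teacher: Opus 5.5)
\textbf{Verdict: genuine gap.} Your strategy is the right kind of algebraic argument, and it matches what the paper outsources to \cite[Section~5]{KPT-2}: compare $C_*=C_*(\wt M)$ with a free resolution $F_*$, use the duality equivalence $\phi=-\cap[M]\colon C^{4-*}\to C_*$, and show that $\pi_2(M)$ is stably controlled by the cap product with $c_*[M]$ on $F_*$.

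The trouble is the one step that carries all the content. That step is that $\pi_2(M)\oplus(\text{free})$ depends only on the chain map $-\cap c_*[M]\colon F^{4-*}\to F_*$. You only assert it (``$C_*$ is, up to elementary complexes, the glued algebraic $4$-type'', ``the connecting data is exactly $\cap\,c_*[M]$''), and you concede it is the main obstacle. Deferring it to Schanuel arguments ``as in \cite{HK}'' does not close it. Hambleton--Kreck treat finite $\pi$, where $H_3(\wt M)=0$, which is exactly the feature you say is missing.

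The concrete description you do give also points the wrong way. An untruncated cone of $F^{4-*}\to F_*$ computes the wrong module. For $c_*[M]=0$ its $H_2$ is $H^3(\pi;\mathbb{Z}\pi)$, which is $0$ for $\pi=\mathbb{Z}/p\rtimes\mathbb{Z}$, whereas $\pi_2(M)$ is stably $A\oplus A'$ there and is not stably free. So ``truncated suitably'' is where the whole proof lives. Relatedly, $\pi_2(M)$ is stably the middle term of an extension of $\coker(d^2\colon C^1\to C^2)$ by $\ker d_2$, not a kernel.

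\textbf{How to close it.} You do not need the stable chain homotopy type of $C_*$, and $H_3(C_*)$ plays no role.
\begin{enumerate}
\item Add contractible elementary complexes; this keeps $\phi$ an equivalence and keeps $H_2$. Then the complexes for $M$ and $M'$ agree in degrees $\le 2$. Extend this common partial resolution to $F_*$, with $g\colon C_*\to F_*$ the identity in degrees $\le2$.
\item Start from the exact sequence $0\to\pi_2(M)\to\coker d_3\xrightarrow{\bar d_2}\ker d_1\to0$.
\item Let $Z$ be the cone of $\phi$; it is contractible. Write $\delta=d_2^*\colon C^1\to C^2$. Since $\phi_*$ is injective on $H_2$, there is an exact sequence $0\to\coker\delta\to C^3\oplus\coker d_3\to\coker d^Z_3\to0$. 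The first map is $(\pm\bar d_3^*,\bar\phi_2)$, and $\coker d^Z_3$ is stably free.
\item Split this sequence and absorb a copy of $C_2$ (lift through the surjection $d_2$). This gives $\pi_2(M)\oplus(\text{free})\cong(\text{free})\oplus\mathcal E$, where
\[
\mathcal E=\ker\bigl(\coker\delta\oplus C_2\xrightarrow{(d_2\phi_2,\ d_2)}\ker d_1\bigr)
\]
is an extension of $\coker\delta$ by $\ker d_2$.
\item Changing $d_2\phi_2$ by a map factoring through a projective does not change $\mathcal E$: lift that map to $C_2$ to get an automorphism of $\coker\delta\oplus C_2$ over $\ker d_1$.
\item We have $\phi_2=(g\phi g^*)_2$ and $g\phi g^*\simeq-\cap c_*[M]$ on $F_*$. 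A homotopy $h$ changes $d_2\phi_2$ by $d_2h_1d_3^*$, which factors through the free module $F^3$.
\end{enumerate}
Hence $\mathcal E$, and so the stable class of $\pi_2(M)$, depends only on $c_*[M]$. The $\pm\Aut(\pi)$ indeterminacy is then harmless, as you say.
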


\begin{proof}
  This is proved in \cite[Section~5]{KPT-2} for smooth $4$-manifolds,
  but the machinery used is algebraic and goes through for Poincar\'e
  complexes with minor modifications, using the cellular chain complex
  in the place of the one coming from a smooth handle decomposition.
\end{proof}

\subsection{The form $\lambda_M^{\mathbb{Z}/p}$}
\label{form}
For this subsection, let $M$ be a Poincar\'e $4$-complex with
fundamental group $\mathbb{Z}/p\rtimes\mathbb{Z}$, where $p$ is an odd prime and $\mathbb{Z}$ acts on
$\mathbb{Z}/p$ by multiplication by $q$. For a $\mathbb{Z}\pi$-module $A$, denote by $A^q$
the $\mathbb{Z}\pi$-module where the action is modified so that the generator $t$
of the $\mathbb{Z}$ subgroup acts as $t\cdot a = qta$ for $a\in A$. In particular,
the symbol $(\mathbb{Z}/p)^q$ denotes the module $\mathbb{Z}/p$ with $t$ acting by
multiplication by $q$ and the other generator acting trivially.

We define the form
\begin{align*}
  \lambda_M^{\mathbb{Z}/p} \colon H^2(M; (\mathbb{Z}/p)^q)\times H^2(M; (\mathbb{Z}/p)^q) & \to H_0(M;
                                                      (\mathbb{Z}/p)^q\otimes(\mathbb{Z}/p)^{wq})
  \\ 
  (\alpha,\beta)                                          & \mapsto \beta\cap(\alpha\cap[M]).
\end{align*}

The tensor product $(\mathbb{Z}/p)^q\otimes(\mathbb{Z}/p)^{wq}$, viewed as a left $\mathbb{Z}\pi$-module
with the diagonal action, is isomorphic to the module $(\mathbb{Z}/p)^{wq^2}$
where $t$ acts by $w(t)q^2$. The group $H_0(M; (\mathbb{Z}/p)^{wq^2}) \cong
\mathbb{Z}\otimes_{\mathbb{Z}\pi}(\mathbb{Z}/p)^{wq^2}$ is then trivial unless $q^2\equiv w(t)\mod\mathbb{Z}/p$, in
which case it is isomoprhic to $\mathbb{Z}/p$.

We wish to define a notion of isomorphism of this form which is
compatible with a given isomorphism of quadratic $2$-types. To do
this, we use the Serre spectral sequence with local coefficients of
the fibration $\wt M \to M \to B\pi$. This has $E_2$ page
$H^k(\pi; H^l(\wt M; \mathbb{Z}/p)^q)$ and converges to $H^{k+l}(M;
(\mathbb{Z}/p)^q)$. From the $2$-diagonal we obtain an exact sequence
\[
  0 \to H^2(\pi; (\mathbb{Z}/p)^q) \to H^2(M; (\mathbb{Z}/p)^q) \to H^0(\pi; H^2(\wt M;
  \mathbb{Z}/p)^q). 
\]
Next we note that for $H^2(\wt M; \mathbb{Z}/p) \cong \Hom_{\mathbb{Z}}(\pi_2(M),
\mathbb{Z}/p)$, the $\pi$-action induced by deck transformations is the usual
right action on $\Hom_{\mathbb{Z}}(\pi_2(M), \mathbb{Z}/p)$. As per our conventions, the
involution turns this into a left module. Twisting this by $q$ and
taking invariants, we see that the term on the right in the sequence
above is isomorphic to $\Hom_{\mathbb{Z}\pi}(\pi_2(M), (\mathbb{Z}/p)^q)$.

For $M$ and $M'$ with isomorphic quadratic $2$-types, we say that
$\lambda_M^{\mathbb{Z}/p}$ and $\lambda_{M'}^{\mathbb{Z}/p}$ are isomorphic, and write $\lambda_M^{\mathbb{Z}/p} \cong
\lambda_{M'}^{\mathbb{Z}/p}$, if there is an isomorphism $H^2(M; (\mathbb{Z}/p)^q) \cong H^2(M';
(\mathbb{Z}/p)^q)$ which is compatible with the forms and which fits into a
commutative diagram
\begin{equation*}
  \begin{tikzcd}
    H^2(\pi_1(M); (\mathbb{Z}/p)^q) \arrow[r] \arrow[d, "\cong"]
    & H^2(M; (\mathbb{Z}/p)^q) \arrow[r] \arrow[d, "\varphi"]
    & \Hom_{\mathbb{Z}\pi}(\pi_2(M), (\mathbb{Z}/p)^q) \arrow[d, "\cong"] \\
    H^2(\pi_1(M'); (\mathbb{Z}/p)^q) \arrow[r]
    & H^2(M'; (\mathbb{Z}/p)^q) \arrow[r]
    & \Hom_{\mathbb{Z}\pi}(\pi_2(M'), (\mathbb{Z}/p)^q),
  \end{tikzcd}
\end{equation*}
where the rows are the exact sequences as above and the vertical
isomorphisms on the left and the right are those induced by the
isomorphism on quadratic $2$-types.

\section{Polarized classification of Poincar\'e $4$-complexes}
\label{sec0}

In this section we will prove \cref{realization-intr}; we recall the
statement for the reader's convenience. This is a generalization of
\cite[Theorem~1.1]{HK}; throughout, we use the symbol $X$ instead of
$M$ to mirror the notation of that paper.

\begingroup
\def\thetheorem{\ref*{realization-intr}}
\begin{theorem}
  Let $X$ be a Poincar\'e $4$-complex with fundamental group $\pi$,
  orientation character $w$, and map to the Postnikov $2$-type
  $f \colon X \to B$. Using the notation from \cref{comm-diag}, for
  every element $\beta \in \ker(\mathord{\ev^{*}} \circ \mathcal{B}_{H_2(B; \mathbb{Z}\pi)}) \subseteq \mathbb{Z}^w
  \otimes_{\mathbb{Z}\pi} H_4(B; \mathbb{Z}\pi)$ there exists a Poincar\'e $4$-complex $X_{\beta}$
  with the same quadratic $2$-type as $X$ and a $3$-connected map
  $f_{\beta} \colon X_{\beta} \to B$  such that $(f_{\beta})_{*}[X_{\beta}] = f_{*}[X] +
  \varphi_B(\beta) \in H_4(B; \mathbb{Z}^w)$.
\end{theorem}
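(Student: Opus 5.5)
We follow the strategy of Hambleton--Kreck~\cite[Theorem~1.1]{HK}: modify $X$ only on its top cell. Write $X = X^{(3)} \cup_{\phi} e^4$ up to homotopy. (If $X$ has several $4$-cells, we first use the standard fact that a Poincar\'e $4$-complex is homotopy equivalent to a complex with a single top cell, at least after passing to a suitable model; for infinite $\pi$ we may instead work directly with the cellular chain complex.) Choose $f$ to be cellular, so that the $3$-skeleton maps into $B$ and $f|_{X^{(3)}}$ is $3$-connected. The attaching map $\phi$ defines a class in $\pi_3(X^{(3)})$. By Whitehead's certain exact sequence, and because $\pi_3(B)=0$,
\[
  \Gamma(\pi_2(X^{(3)})) \to \pi_3(X^{(3)}) \to H_3(\wt X^{(3)}) \to \ldots
\]
This gives room to alter $\phi$ by elements coming from $\Gamma(\pi_2)$.

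\textbf{Step 1: constructing $X_\beta$.} Lift $\beta$ to an element $\tilde\beta \in H_4(B;\mathbb{Z}\pi)\cong\Gamma(H_2(B;\mathbb{Z}\pi))$. Since $\pi_2(X^{(3)})$ surjects onto $\pi_2(B)=H_2(B;\mathbb{Z}\pi)$, we can further lift $\tilde\beta$ to an element $b\in\Gamma(\pi_2(X^{(3)}))$. Let $\phi_\beta=\phi+\eta(b)$, where $\eta$ is the Whitehead map $\Gamma(\pi_2)\to\pi_3$ (compositions with the Hopf map and Whitehead products), and set
\[
  X_\beta = X^{(3)}\cup_{\phi_\beta} e^4 .
\]
Because $\eta(b)$ lies in the kernel of the Hurewicz map, $\phi_\beta$ has the same image in $H_3(\wt X^{(3)})$ as $\phi$. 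Hence the cellular chain complexes of $\wt X$ and $\wt X_\beta$ agree, and so do their boundary maps. Because $f$ kills the image of $\eta(b)$ in $\pi_3(B)=0$, the map $f$ extends to a map $f_\beta\colon X_\beta\to B$. This map is still $3$-connected, so $\pi_1$, $\pi_2$, $w$ and the $k$-invariant are unchanged.

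\textbf{Step 2: the fundamental class.} Define $[X_\beta]$ to be the cellular chain given by the same $4$-cell; this is a cycle because the boundary maps agree. One must check that $(f_\beta)_*[X_\beta]-f_*[X]=\varphi_B(\beta)$. The difference of the two maps on the top cell is a map $S^4\to B$-ish obtained by pinching. Its image in $H_4(\wt B)=\Gamma(\pi_2 B)$ is $\tilde\beta$ (up to the $\pi$-translates of the cell); this uses the naturality of Whitehead's $\Gamma$ identification. Reducing coefficients then gives $\varphi_B(\beta)$.

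\textbf{Step 3: Poincar\'e duality and the intersection form.} This is the main obstacle. Cap product with $[X_\beta]$ on the cochain level differs from cap product with $[X]$ by a term determined by $\tilde\beta$ through the diagonal approximation. By \cref{comm-diag}, the induced change in the pairing on $H^2(B;\mathbb{Z}\pi)=H^2(X;\mathbb{Z}\pi)$ is $\Theta_B\varphi_B(\beta)=\ev^*\mathcal{B}(\beta)=0$. So the maps $\cap[X_\beta]$ and $\cap[X]$ agree on $H^2$. They also agree on $H^0$, $H^1$, $H^3$ and $H^4$, because the correction term has degree $(2,2)$ only; this is seen from the chain-level diagonal on the unchanged $3$-skeleton. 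Therefore $X_\beta$ satisfies Poincar\'e duality. Its equivariant intersection form on $\pi_2$ equals that of $X$, since it is computed through the same identification $H^2\to H_2$. Consequently $Q(X_\beta)\cong Q(X)$.

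I expect the hard part to be the rigorous chain-level comparison of the diagonal approximations. My first attempt would be to mimic the argument of Hambleton--Kreck, using the mapping cylinder and the equivariant chain homotopy, and replacing their finiteness-of-$\pi$ arguments with finitely generated free $\mathbb{Z}\pi$-chain complexes.
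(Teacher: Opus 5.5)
Your construction (Steps 1 and 2) is the paper's Hambleton--Kreck construction. You add a lift of $\beta$ in $\Gamma(\pi_2)$ to the attaching map of the top cell, note that it is Hurewicz-trivial so $C_*(\wt X_\beta)\cong C_*(\wt X)$, extend $f$ using $\pi_3(B)=0$, and deduce $Q(X_\beta)\cong Q(X)$ from $\Theta_B\varphi_B(\beta)=\ev^{*}\mathcal{B}(\beta)=0$.

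\textbf{Main gap: degree $1$.} As you suspect, the gap is in Step 3. The sentence ``the correction term has degree $(2,2)$ only; this is seen from the chain-level diagonal on the unchanged $3$-skeleton'' is not an argument. Agreement of the diagonals on $X^{(3)}$ only makes their difference on the $4$-cell a cycle. It does not kill the $(3,1)$ and $(1,3)$ components, and those govern $H^1\to H_3$.

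Degree $1$ is the one place where a real comparison is needed. For infinite $\pi$ the groups in degrees $0$ and $3$ vanish. Degree $4$ follows directly, because $H^4(X_\beta;\mathbb{Z}^w)\to H_0(X_\beta;\mathbb{Z})$ is evaluation on $[X_\beta]$.

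The missing idea is to lift $\beta$ one step further, to $\gamma\in\Gamma(\pi_2(X^{(2)}))$. This group sits inside $\pi_3(X^{(2)})$ because $H_4$ of the universal cover of a $2$-complex vanishes. Then:
\begin{enumerate}
\item Put both top cells into $Y=X\cup_{\gamma+\alpha}D^4$ and set $K=X^{(2)}\cup_\gamma D^4$.
\item The inclusion of $X^{(2)}$ extends to $g\colon K\to Y$ with $g_*[K]=i_{2*}[X_\beta]-i_{1*}[X]$.
\item The map $-\cap g_*[K]=g_*(g^*(-)\cap[K])$ vanishes on $H^1(Y;\mathbb{Z}\pi)$, because $K$ has no $3$-cells and so $H_3(K;\mathbb{Z}\pi)=0$.
\item Since $Y$, $X$ and $X_\beta$ have the same $H^1$ and $H_3$, this gives duality in degree $1$.
\end{enumerate}
Lifting only to $X^{(3)}$, as you do, loses exactly this, since the difference complex would then have $3$-cells.

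The same $K$ is also the honest version of your ``$S^4\to B$-ish'' pinch in Step 2. A genuine map $S^4\to B$ is null-homotopic because $\pi_4(B)=0$, so it cannot carry $\tilde\beta$.

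\textbf{Smaller point: degree $2$.} From $\Theta_B\varphi_B(\beta)=0$ you only learn that the difference $D$ of the two cap maps $H^2\to H_2$ has image pairing trivially with all of $H^2(X;\mathbb{Z}\pi)$. Concluding that the cap maps agree needs $H_2(X;\mathbb{Z}\pi)\to\Hom_{\mathbb{Z}\pi}(H^2(X;\mathbb{Z}\pi),\mathbb{Z}\pi)$ to be injective, and this fails when $H^2(\pi;\mathbb{Z}\pi)\neq0$.

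What you need still holds. Write $P=-\cap[X]$.
\begin{itemize}
\item Since $C_3(\wt K)=0$, the $(2,2)$-part of the diagonal of the $4$-cell of $K$ lies in $\pi_2(X^{(2)})\otimes\pi_2(X^{(2)})$. Hence $D$ factors through $\ev\colon H^2(X;\mathbb{Z}\pi)\to\pi_2(X)^{*}$.
\item The radical of the Hermitian form $\lambda_X$ on $H^2(X;\mathbb{Z}\pi)$ is $\ker\ev$, because $P$ is onto. Hence the image of $D$ lies in $P(\ker\ev)$.
\item Together these give $(P^{-1}D)^2=0$, so $-\cap[X_\beta]=P\circ(\id+P^{-1}D)$ is an isomorphism.
\end{itemize}
The paper disposes of this degree by appealing to unimodularity of $\lambda_{X_\beta}=\lambda_X$.
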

\addtocounter{theorem}{-1}
\endgroup

\begin{proof}
  To construct the required $X_{\beta}$, we follow
  \cite[page~89]{HK}. Write $X = X^{(3)}\cup_{\alpha}D^4$. Since $\beta$ lives in
  $\mathbb{Z}^w \otimes_{\mathbb{Z}\pi} H_4(B; \mathbb{Z}\pi)$, we can take a preimage in $\Gamma(H_2(B; \mathbb{Z}\pi)) \cong
  \Gamma(\pi_2(X))$. However, $\pi_2(X^{(2)})$ surjects onto $\pi_2(X)$, hence
  also $\Gamma(\pi_2(X^{(2)}))$ surjects onto $\Gamma(\pi_2(X))$, and so we can take
  a preimage $\gamma \in \Gamma(\pi_2(X^{(2)})) \subseteq \pi_3(X^{(2)})$. Form $X_{\beta} =
  X^{(3)}\cup_{\gamma+\alpha}D^4$ and extend $f|_{X^{(3)}}$ to a $3$-connected map
  $f_{\beta} \colon X_{\beta} \to B$. By this construction, $f_{\beta*}[X_{\beta}] =
  f_{*}[X] + \varphi_B(\beta)$ holds, where $[X_{\beta}] \in H_4(X_{\beta}; \mathbb{Z}^w) \cong \mathbb{Z}$ is
  the generator represented by the $4$-cell. Now $X_{\beta}$ has the same
  intersection form as $X$ since in the diagram from \cref{comm-diag}
  \begin{equation*}
    \begin{tikzcd}
      \mathbb{Z}^w \otimes_{\mathbb{Z}\pi} H_4(B; \mathbb{Z}\pi) \arrow[r, "\varphi_B"] \arrow[d, "\mathcal{B}_{H_2(B;, \mathbb{Z}\pi)}"]
      & H_4(B; \mathbb{Z}^w) \arrow[d, "\Theta_B"] \\
      \Her^w(H_2(B; \mathbb{Z}\pi)^{*}) \arrow[r, "\ev^{*}"]
      & \Her^w(H^2(B; \mathbb{Z}\pi))
    \end{tikzcd}
  \end{equation*}
  $\beta$ maps to $0$ by the left leg, hence $0 = \Theta_B\varphi_B(\beta) =
  \Theta_B(f_{\beta*}[X_{\beta}] - f_{*}[X]) = f_{\beta*}\lambda_{X_{\beta}} -
  f_{*}\lambda_X$. Together with the $3$-connected map $f_{\beta}$, this means
  that $Q(X_{\beta}) \cong Q(X)$. All that remains is to show that $X_{\beta}$ is
  indeed a Poincar\'e $4$-complex. That is, we want the maps
  \[
    -\cap[X_{\beta}] \colon H^k(X_{\beta}; \mathbb{Z}\pi) \to H_{4-k}(X_{\beta}; \mathbb{Z}\pi)
  \]
  to be isomorphisms.

  We focus on infinite groups, since the case for finite groups is
  covered by \cite[Theorem~1.1]{HK}. Since the boundaries of the
  $4$-cells differ by a cycle which lives on the $2$-skeleton, we have
  that the cellular chain complex $C_{*}(\wt X_{\beta})$ is
  isomorphic to $C_{*}(\wt X)$. This means that the above
  groups are already abstractly isomorphic because Poincar\'e duality
  holds for $X$. For $k = 0,3$ this is enough, since the groups
  involved are trivial. For $k = 2$ we have an isomorphism since
  $\lambda_{X_{\beta}}$, being the same as $\lambda_X$, is unimodular. This leaves $k
  = 1,4$.

  For $k = 4$, we can see the required isomorphism by considering the
  commutative diagram
  \begin{equation*}
    \begin{tikzcd}
      H^4(X_{\beta}; \mathbb{Z}\pi) \arrow[d, "\cong"'] \arrow[r, "-\cap{[X_{\beta}]}"]
      & H_0(X_{\beta}; \mathbb{Z}\pi) \arrow[d, "\cong"] \\
      H^4(X_{\beta}; \mathbb{Z}^w) \arrow[r, "\cong"', "-\cap{[X_{\beta}]}"]
      & H_0(X_{\beta}; \mathbb{Z}),
    \end{tikzcd}
  \end{equation*}
  where the vertical maps are surjections from $\mathbb{Z}$ to $\mathbb{Z}$, hence
  isomorphisms, and the bottom map is an isomorphism because it is
  given by evaluation on $[X_{\beta}]$.

  For $k = 1$, we will compare capping with $[X_{\beta}]$ to capping with
  $[X]$ by looking at the space $Y = X\cup_{\gamma+\alpha}D^4$ with both
  $4$-cells. Let $i_1 \colon X \to Y$ and $i_2 \colon X_{\beta} \to Y$ be
  the inclusions. Again, since the $4$-cells differ by a cycle on the
  $2$-skeleton, the homology and cohomology of $Y$ up to dimension
  $3$ agree with those of $X$ and $X_{\beta}$, and naturality of the cap
  product gives that the map $-\cap i_{1*}[X] \colon H^1(Y; \mathbb{Z}\pi) \to
  H_3(Y; \mathbb{Z}\pi)$ is an isomorphism. Hence we are done if we show
  that $-\cap i_{2*}[X_{\beta}] = -\cap i_{1*}[X]$ and apply naturality again.

  Recall that $\gamma \in \pi_3(X^{(2)})$, so we can form $K =
  X^{(2)}\cup_{\gamma}D^4$, which is a $4$-complex with no $3$-cells. The
  inclusion of the $2$-skeleton into $Y$ extends to a map $g \colon K
  \to Y$ and we have that $g_{*}[K] = i_{2*}[X_{\beta}] - i_{1*}[X]$. Now
  once again we use naturality of the cap product to see that $-\cap
  g_{*}[K] = g_{*}(g^{*}(-)\cap[K]) = 0$ since $K$ has no
  $3$-cells. This completes the proof of the case $k = 1$ and the
  theorem.
\end{proof}

Combining this theorem with \cref{bauesbleile} proves
\cref{realization-cor}.

\section{The kernel of $\varphi_B$}
\label{sec-ker}

The setup for this section and \cref{sec-diff} is the following. Let
$M$ be a Poincar\'e $4$-complex with fundamental group $\pi$ and
orientation character $w$, and suppose we have that $c_{*}[M] = 0 \in
H_4(\pi; \mathbb{Z}^w)$ for a classifying map $c\colon M \to B\pi$. We make no
assumption on the fundamental group for now. Let $K$ be a $2$-complex
with $\pi_1(K) \cong \pi$. Let $(C_*, \partial_{*})$ be the cellular chain complex of
free left $\mathbb{Z}\pi$-modules of $\wt K$. Then we have that $\pi_2(M)$ is
stably isomorphic to $\ker \partial_2\oplus\coker \partial^2$; see
\cite[Section~5]{KPT-2}. We write $B$ for the Postnikov $2$-type of
$M$ and $A$ for the module $\coker \partial^2$. Note that $\ker
\partial_2$ is isomorphic to $A^{*}$ by \cite[Lemma~3.12]{KPT}.

We will use the following lemma often throughout the paper,
sometimes implicitly.

\begin{lemma}
  \label{dthreeiso}
  Let $B'$ be a $3$-coconnected CW complex with fundamental group $\pi$,
  $\pi_2(B')$ stably isomorphic to $A\oplus A^{*}$, and $k$-invariant mapping
  to the $k$-invariant of $K$ under the projection map $H^3(\pi;
  \pi_2(B')) \cong H^3(\pi; A\oplus A^{*}) \to H^3(\pi; A^{*})$. Then, for $p\geq3$, the
  differential
  \[
    d_3 \colon H_p(\pi; \mathbb{Z}^w) \to H_{p-3}(\pi; \pi_2(B')^w) = H_{p-3}(\pi;
    (A^{*})^w)\oplus H_{p-3}(\pi; A^w)
  \]
  in the Serre spectral sequence with local coefficients of the
  fibration $\wt{B'} \to B' \to B\pi$ is injective with image the first
  summand.
\end{lemma}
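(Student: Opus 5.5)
The plan is to identify $d_3$ with cap product against the $k$-invariant, and then use dimension shifting along the cellular chain complex of $\wt K$.

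\textbf{Step 1: $d_3$ is cap product with $k$.} For a $3$-coconnected space $B'$, the fibre of $\wt{B'} \to B' \to B\pi$ is $K(\pi_2(B'),2)$. In the Serre spectral sequence with coefficients in $\mathbb{Z}^w$, the only possibly nonzero $E^2$ rows in the relevant range are $H_*(\pi;\mathbb{Z}^w)$ and $H_*(\pi;\pi_2(B')^w)$. The transgression-type differential $d_3\colon E^3_{p,0}\to E^3_{p-3,2}$ is, up to sign, cap product with the $k$-invariant $k_{B'}\in H^3(\pi;\pi_2(B'))$. This is the standard identification of the first nontrivial differential of a two-stage Postnikov system; I would verify it on the universal case using naturality in $k$. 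Consequently $d_3$ is natural in the coefficient module, and composing with a projection $\pi_2(B')\to N$ gives cap product with the image of $k_{B'}$ in $H^3(\pi;N)$. The stably free summand of $\pi_2(B')$ contributes nothing, since $H^3(\pi;F)$ pairs trivially into $H_{p-3}(\pi;F^w)$ under a free resolution argument. So I only need the components along $A^*$ and $A$.

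\textbf{Step 2: the $A^*$-component.} By hypothesis, the projection of $k_{B'}$ to $H^3(\pi;A^*)$ is $k_K$. Consider the exact sequence of left $\mathbb{Z}\pi$-modules
\[
0\to A^*\cong\ker\partial_2\to C_2\to C_1\to C_0\to\mathbb{Z}\to0 .
\]
The class $k_K$ is precisely the Yoneda class of this $3$-extension. Cap product with it is therefore the composite of the three connecting homomorphisms of the short exact pieces, after tensoring with $\mathbb{Z}^w$. Since the $C_i$ are free, $H_j(\pi;C_i^w)=0$ for $j\ge1$. Dimension shifting then gives an isomorphism $H_p(\pi;\mathbb{Z}^w)\cong H_{p-3}(\pi;(A^*)^w)$ for $p\ge4$, and an injection with image $\ker\bigl(H_0(\pi;(A^*)^w)\to H_0(\pi;C_2^w)\bigr)$ for $p=3$. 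In particular, the projection of $d_3$ to the first summand is injective, so $d_3$ itself is injective.

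\textbf{Step 3: the $A$-component, which I expect to be the main obstacle.} By Step 2, the image of $d_3$ is the graph of a homomorphism from (a subgroup of) $H_{p-3}(\pi;(A^*)^w)$ to $H_{p-3}(\pi;A^w)$. This homomorphism is given by cap product with the $A$-component $k_A\in H^3(\pi;A)$. To obtain "image equal to the first summand", I would try to choose the splitting $\pi_2(B')\cong A\oplus A^*\oplus F$ so that $k_A=0$.

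My first approach is a shear. Since $H^3(\pi;A^*)$ is generated under module maps by $k_K$ (Yoneda/dimension shifting), every class in $H^3(\pi;A)$ of the form $\theta_*k_K$ comes from a module map $\theta\colon A^*\to A$. I would show, again by dimension shifting using $A=\coker\partial^2$, that $k_A$ is of this form. Then the automorphism $(a,b)\mapsto(a-\theta(b),b)$ of $A\oplus A^*$ moves $k_{B'}$ to have $A$-component zero while preserving the $A^*$-component. After this re-splitting, $d_3$ lands exactly in the first summand. The point needing most care is showing $k_A\in\im\bigl(\Hom(A^*,A)\to H^3(\pi;A)\bigr)$, together with checking the $p=3$ edge case of Step 2.
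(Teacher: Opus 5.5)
Your argument is correct, but it takes a different route from the paper's. The paper never identifies $d_3$ with a cap product. Its argument runs in three steps:
\begin{enumerate}
\item In the Serre spectral sequence of $\wt K\to K\to B\pi$ only rows $0$ and $2$ are nonzero, while $H_{\geq 3}(K;\mathbb{Z}^w)=0$. This forces $d_3\colon H_p(\pi;\mathbb{Z}^w)\to H_{p-3}(\pi;(A^*)^w)$ to be bijective. Strictly it is only injective at $p=3$; your Step 2 gets this right and the paper does not.
\item This transfers to $P_2(K)$ along the $3$-connected map $K\to P_2(K)$.
\item It then transfers to $B'$ along the map of fibrations $B'\to P_2(K)$, which exists because the $A^*$-component of $k_{B'}$ is $k_K$.
\end{enumerate}
That argument is purely formal. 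It needs neither your Step 1 (the identification $d_3=\pm k\cap-$ with local coefficients, the heaviest black box in your plan) nor the Yoneda description of $k_K$; your Step 2 is its algebraic counterpart.

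What your route buys is Step 3. The paper's diagram only sees $\mathrm{pr}_{A^*}\circ d_3$. So strictly it shows only that $d_3$ is injective and that its image projects isomorphically onto the first summand, i.e.\ the image is the graph of some homomorphism $H_{p-3}(\pi;(A^*)^w)\to H_{p-3}(\pi;A^w)$. That suffices for the later uses: cokernel computations in \cref{nontriv-im}, and for $B^{\oplus}$ the $A$-component of the $k$-invariant vanishes anyway.

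For a fixed splitting the literal claim can fail. Take $\pi=\mathbb{Z}/n$ with its standard presentation, so that $A\cong A^*$ is the augmentation ideal, and let the $A$-component of $k$ be $\theta_*k_K$ for an isomorphism $\theta\colon A^*\to A$. Then the image of $d_3$ on $H_5(\pi;\mathbb{Z})\cong\mathbb{Z}/n$ is a diagonal. So your re-splitting is genuinely needed, and you should state the conclusion as holding for a suitably re-chosen splitting with the same projection to $A^*$.

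The step you flag as the main obstacle is immediate. Dimension shifting along $0\to A^*\to C_2\to C_1\to C_0\to\mathbb{Z}\to0$ gives $H^3(\pi;N)\cong\coker\bigl(\Hom_{\mathbb{Z}\pi}(C_2,N)\to\Hom_{\mathbb{Z}\pi}(A^*,N)\bigr)$ for every module $N$, with $\theta\mapsto\theta_*k_K$ as the quotient map. Hence $k_A$ is automatically of the form $\theta_*k_K$, and $A=\coker\partial^2$ plays no role.

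The same shear also removes the component of $k$ in $H^3(\pi;F)$ for the free summand, and that is how you should handle $F$. Your claim that $F$ pairs trivially holds only for $p\geq4$, where $H_{p-3}(\pi;F^w)=0$. At $p=3$ the cap product $H^3(\pi;\mathbb{Z}\pi)\otimes H_3(\pi;\mathbb{Z}^w)\to H_0(\pi;\mathbb{Z}\pi^w)$ can be nonzero, e.g.\ for $\pi=\mathbb{Z}^3$.

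Finally, once $k_A=0$ you can drop Step 1 altogether. Use the map $B'\to L$ to the $2$-type with $\pi_2=A$ and trivial $k$-invariant. Since $L\to B\pi$ has a section, no differentials leave its bottom row, so by naturality the $A$-component of $d_3$ vanishes. The paper's argument then handles the $A^*$-component.
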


\begin{proof}
  Consider first the Serre spectral sequence of the fibration
  $\wt K \to K \to B\pi$ that has $E^2$ page $H_p(\pi; H_q(\wt
  K; \mathbb{Z})^w)$ and converges to $H_{p+q}(K; \mathbb{Z}^w)$. Since $K$ and $\wt K$
  are $2$-complexes, the only nonvanishing rows in the spectral
  sequence are the zeroth and the second, and since $H_k(K; \mathbb{Z}^w) = 0$
  for $k\geq3$, we must have that the differential $d_3 \colon H_p(\pi;
  \mathbb{Z}^w) \to H_{p-3}(\pi; (A^{*})^w)$ is an isomorphism for $p\geq3$.

  Since the map $K \to P_2(K)$ to the Postnikov $2$-type is
  $3$-connected, the same $d_3$ differentials are also isomorphisms in 
  the analogous Serre spectral sequence of $K(A^{*}, 2) \to P_2(K) \to
  B\pi$.

  Now since the $k$-invariants agree, we have a map $B' \to P_2(K)$
  inducing a map of fibrations
  \begin{equation*}
    \begin{tikzcd}[sep=small]
      \wt{B'} \arrow[r] \arrow[d]
      & B' \arrow[r] \arrow[d]
      & B\pi \arrow[d, equals] \\
      K(A^{*}, 2) \arrow[r]
      & P_2(K) \arrow[r]
      & B\pi,
    \end{tikzcd}
  \end{equation*}
  and by naturality of the spectral sequence we obtain the commutative
  diagram
  \begin{equation*}
    \begin{tikzcd}
      H_p(\pi; \mathbb{Z}^w) \arrow[d, "\cong"] \arrow[r, "d_3"]
      & H_{p-3}(\pi; (A^{*})^w)\oplus H_{p-3}(\pi; A^w) \arrow[d] \\
      H_p(\pi; \mathbb{Z}^w) \arrow[r, "d_3", "\cong"']
      & H_{p-3}(\pi; (A^{*})^w),
    \end{tikzcd}
  \end{equation*}
  where the right vertical map is the projection, and we conclude the
  lemma.
\end{proof}

Recall that $B$ stands for the Postnikov $2$-type of $M$, and note
that it satisfies the conditions of \cref{dthreeiso}.

\begin{lemma}
  \label{triv-k}
  Let $L$ be a $3$-coconnected CW complex with vanishing
  $k$-invariant and $w \colon \pi_1(L) \to \{\pm1\}$ be a homomorphism. Then
  the map $\varphi_L \colon \mathbb{Z}^w\otimes_{\mathbb{Z}\pi_1(L)}H_4(\wt L; \mathbb{Z}) \to H_4(L; \mathbb{Z}^w)$ is
  injective.
\end{lemma}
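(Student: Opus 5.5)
We prove the statement with the Serre spectral sequence with local coefficients of the fibration $\wt L \to L \to B\pi$, where $\pi = \pi_1(L)$. Its $E^2$ page is $E^2_{s,t} = H_s(\pi; H_t(\wt L;\mathbb{Z})^w)$, and it converges to $H_{s+t}(L;\mathbb{Z}^w)$. The map $\varphi_L$ is the edge homomorphism
\[
  \mathbb{Z}^w\otimes_{\mathbb{Z}\pi}H_4(\wt L;\mathbb{Z}) = E^2_{0,4} \twoheadrightarrow E^\infty_{0,4} \hookrightarrow H_4(L;\mathbb{Z}^w),
\]
so it suffices to show that every differential with target $E^r_{0,4}$ vanishes. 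Since $\wt L$ is a $K(\pi_2(L),2)$, we have $H_3(\wt L;\mathbb{Z}) = 0$ and $H_1(\wt L;\mathbb{Z}) = 0$. Hence the only possibly nonzero differentials into the $(0,4)$ spot are
\[
  d_3\colon E^3_{3,2} \to E^3_{0,4} \quad\text{and}\quad d_5\colon E^5_{5,0} \to E^5_{0,4}.
\]
The $d_2$ and $d_4$ differentials into $(0,4)$ start in the rows $t=3$ and $t=1$, which are zero.

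The key input is the vanishing of the $k$-invariant. It means $L \simeq$ the "split" $2$-type: the fibration $K(\pi_2,2)\to L \to B\pi$ admits a section $s\colon B\pi\to L$. In fact, with vanishing $k$-invariant, $L$ is the homotopy quotient $E\pi\times_\pi K(\pi_2(L),2)$ for a $\pi$-equivariant model of $K(\pi_2(L),2)$ with a $\pi$-fixed basepoint. For a fibration with a section, the standard argument applies. The composite $B\pi \xrightarrow{s} L \to B\pi$ is the identity, so the edge map $H_*(L;\mathbb{Z}^w)\to H_*(\pi;\mathbb{Z}^w)$ is split surjective. Therefore every element of the bottom row $E^2_{s,0}$ survives to $E^\infty$, and all differentials out of the bottom row vanish. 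This kills $d_5\colon E^5_{5,0}\to E^5_{0,4}$, and likewise $d_3\colon E^3_{3,0}\to E^3_{0,2}$.

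The remaining and main obstacle is $d_3\colon E^3_{3,2}\to E^3_{0,4}$, which leaves row $2$. For this I would use the product structure available in the split case. Write $L$ as the Borel construction above, and realize $K(\pi_2,2)$ as a simplicial abelian group with $\pi$ acting by group automorphisms. Then $L$ is the total space of a fibration of "abelian group objects over $B\pi$". Alternatively, I would use the fiberwise multiplication-by-$n$ maps $[n]\colon L\to L$ over $B\pi$, which exist because the fibration is a fiberwise Eilenberg–MacLane space with section. On the fiber homology, $[n]$ acts on $H_2(\wt L) = \pi_2(L)$ by $n$ and on $H_4(\wt L)=\Gamma(\pi_2(L))$ by $n^2$, by naturality of $\Gamma$. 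It induces a self-map of the spectral sequence. Its effect on $E^3_{3,2}$ is multiplication by $n$ and on $E^3_{0,4}$ multiplication by $n^2$. Commutativity with $d_3$ then gives $(n^2-n)\,d_3(x) = 0$ for all $n$. Taking $n=2$ shows that the image of $d_3$ is $2$-torsion, and combining $n=2$ and $n=3$ does not improve this, since $\gcd(2,6)=2$.

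To finish, I would either argue that $E^2_{0,4}=\mathbb{Z}^w\otimes_{\mathbb{Z}\pi}\Gamma(\pi_2(L))$ has no $2$-torsion in the cases needed, or, better, refine the argument by instead using the splitting $\Gamma(A)\to A\otimes A$ together with a fiberwise diagonal to see that this $d_3$ vanishes integrally. If that fails, I would fall back on the remark in \cref{tors-free} when $\pi_2$ is projective. I expect this $d_3$ step to be the real difficulty.
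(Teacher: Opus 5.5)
Your reduction is correct as far as it goes, but the step that carries the content of the lemma is left unproved. It is true that $\varphi_L$ is the edge map $E^2_{0,4}\to E^\infty_{0,4}\hookrightarrow H_4(L;\mathbb{Z}^w)$. It is also true that the only differentials that can hit $E_{0,4}$ are $d_3\colon E^3_{3,2}\to E^3_{0,4}$ and $d_5\colon E^5_{5,0}\to E^5_{0,4}$, and that the section $B\pi\to L$ kills $d_5$. But the lemma is then exactly the statement that the row-$2$ differential $d_3$ vanishes, and you do not establish this.

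The fibrewise multiplication-by-$n$ maps only give $(n^2-n)\,d_3=0$. So your argument shows merely that $\ker\varphi_L=\im d_3$ is annihilated by $2$. A torsion count cannot finish the job. Take $\pi=\mathbb{Z}/2$ acting by $-1$ on $\pi_2(L)=\mathbb{Z}$, with $w$ nontrivial (so $L\simeq BO(2)$). Then $E^3_{3,2}=H_3(\mathbb{Z}/2;\mathbb{Z})$ and $E^3_{0,4}=\mathbb{Z}^w\otimes_{\mathbb{Z}\pi}\Gamma(\pi_2(L))$ are both $\mathbb{Z}/2$, and scaling says nothing.

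None of your fallbacks closes the gap:
\begin{itemize}
\item Assuming $\mathbb{Z}^w\otimes_{\mathbb{Z}\pi}\Gamma(\pi_2(L))$ has no $2$-torsion proves only a special case. It would cover the eventual use for $\mathbb{Z}/p\rtimes\mathbb{Z}$ with $p$ odd, where that torsion is at most $\mathbb{Z}/p$. But the lemma is stated for arbitrary $\pi$ and used that way in \cref{sec-ker}.
\item The projective case is not the relevant one. In \cref{ker_phi1} the lemma is applied with $\pi_2(L)=A=\coker\partial^2$, which is typically not projective, and \cref{B-inj} additionally needs a hypothesis on $w$.
\item The idea of going through $\Gamma(A)\subseteq A\otimes A$ with a fibrewise diagonal is not worked out, and it cannot detect this $2$-torsion by itself. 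The map $\mathbb{Z}^w\otimes_{\mathbb{Z}\pi}\Gamma(A)\to\mathbb{Z}^w\otimes_{\mathbb{Z}\pi}(A\otimes A)$ can kill torsion. For example, with $\pi=\mathbb{Z}/2=\langle g\rangle$, $A=\mathbb{Z}\pi$ and $w$ nontrivial, the order-$2$ class of $1\sq g$ maps to $0$ in a torsion-free group.
\end{itemize}

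The missing ingredient is what the paper supplies, and it sidesteps the differential entirely. Since $\mathbb{Q}/\mathbb{Z}$ is an injective cogenerator, $\varphi_L$ is injective if and only if its $\mathbb{Q}/\mathbb{Z}$-dual is surjective. By universal coefficients, that dual is the cohomological edge map $H^4(L;(\mathbb{Q}/\mathbb{Z})^w)\to\Hom_{\mathbb{Z}\pi}(\mathbb{Z}^w,H^4(\wt L;\mathbb{Q}/\mathbb{Z}))$. So one must show that every $w$-twisted $\pi$-invariant class on the fibre $\wt L=K(\pi_2(L),2)$ extends over $L$. For a split, twisted Eilenberg--MacLane space this is Siegel's theorem on cohomology with local coefficients. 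Its only hypothesis, $H^3(\wt L;\mathbb{Q}/\mathbb{Z})=0$, follows from Hurewicz and universal coefficients. That result disposes of the row-$2$ obstruction (and of $d_5$) at once. Without it, or an integral argument that $d_3=0$, your proof gives only that $\ker\varphi_L$ is $2$-torsion.
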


\begin{proof}
  Consider the induced map
  \[
    \varphi_L^{\mathbb{Q}/\mathbb{Z}} \colon \Hom_{\mathbb{Z}}(H_4(L; \mathbb{Z}^w), \mathbb{Q}/\mathbb{Z}) \to
    \Hom_{\mathbb{Z}}(\mathbb{Z}^w\otimes_{\mathbb{Z}\pi_1(L)}H_4(\wt L; \mathbb{Z}), \mathbb{Q}/\mathbb{Z}).
  \]
  Injectivity of $\varphi_L$ is equivalent to surjectivity of $\varphi_L^{\mathbb{Q}/\mathbb{Z}}$;
  see e.g.\ \cite[Lemma~3.2.5]{weibel}. Now since $\mathbb{Q}/\mathbb{Z}$ is an
  injective group, the universal coefficient theorem identifies the
  map $\varphi_L^{\mathbb{Q}/\mathbb{Z}}$ with the edge homomorphism $H^4(L; (\mathbb{Q}/\mathbb{Z})^w) \to
  \Hom_{\mathbb{Z}\pi_1(L)}(\mathbb{Z}^w, H^4(\wt L; \mathbb{Q}/\mathbb{Z}))$ in the cohomological Serre
  spectral sequence of the fibration $\wt L \xrightarrow{\iota} L \to
  B\pi_1(L)$. In other words, we need to show that the image of the
  induced map on cohomology $\iota^{*} \colon H^4(L; (\mathbb{Q}/\mathbb{Z})^w) \to H^4(\wt L;
  \mathbb{Q}/\mathbb{Z})$ is the subgroup of invariants.

  Since the $k$-invariant of $L$ vanishes, $L$ is a twisted
  Eilenberg--MacLane space, whose cohomology has been investigated in
  connection to cohomology operations with local coefficients. In
  particular, \cite[Theorem~4.7(b)]{siegel} gives us that the
  invariants are in the image of $\iota^{*}$ as long as the group $H^3(\wt
  L; \mathbb{Q}/\mathbb{Z})$ is trivial. Again since $\mathbb{Q}/\mathbb{Z}$ is injective and $H_3(\wt L;
  \mathbb{Z}) = 0$ by the Hurewicz theorem, the universal coefficient theorem
  grants us that condition, and we are done.
\end{proof}

For the remainder of this section, let $B^{\oplus}$ denote the Postnikov
$2$-type of the double $D(K)$ of $K$. This has fundamental group $\pi$
and $\pi_2(B^{\oplus}) = A\oplus A^{*}$, and the inclusion $K \to D(K)$ induces the
inclusion of the $A^{*}$ summand on $\pi_2$. Hence the $k$-invariant of
$B^{\oplus}$ is contained in $H^3(\pi; A^{*})\leq H^3(\pi; A^{*}\oplus A)$ and $B^{\oplus}$
also satisfies the assumptions of \cref{dthreeiso}. Further let $B^s$
be the Postnikov $2$-type of $B^{\oplus}\vee^k S^2$. The inclusion and
collapse maps $B^{\oplus} \to B^{\oplus}\vee^k S^2 \to B^{\oplus}$ induce maps on the
Postnikov $2$-types $B^{\oplus} \to B^s \to B^{\oplus}$, which in turn induce the
inclusion of $\mathbb{Z}^w\otimes_{\mathbb{Z}\pi}\Gamma(A\oplus A^{*})$ as a direct summand in
$\mathbb{Z}^w\otimes_{\mathbb{Z}\pi}\Gamma(A \oplus A^{*} \oplus \mathbb{Z}\pi^k)$ from \cref{dsum-rem}.

On the other hand, $B^s$ is homotopy equivalent to the Postnikov
$2$-type of $B\vee^jS^2$, and we also have inclusion and collapse maps
inducing $B \to B^s \to B$, and $\mathbb{Z}^w\otimes_{\mathbb{Z}\pi}\Gamma(\pi_2(B))$ is a direct summand
of $\mathbb{Z}^w\otimes_{\mathbb{Z}\pi}\Gamma(A\oplus A^{*}\oplus\mathbb{Z}\pi^k)$ again by \cref{dsum-rem}. Finally, we
have that $\mathbb{Z}^w\otimes_{\mathbb{Z}\pi}\Gamma(A\oplus A^{*})$ decomposes as the sum
$\mathbb{Z}^w\otimes_{\mathbb{Z}\pi}\Gamma(A)\oplus\mathbb{Z}^w\otimes_{\mathbb{Z}\pi}\Gamma(A^{*})\oplus A\otimes_{\mathbb{Z}\pi}A^{*}$.

\begin{lemma}
  \label{ker_phi1}
  The kernel of the map $\varphi_B \colon \mathbb{Z}^w\otimes_{\mathbb{Z}\pi}H_4(B; \mathbb{Z}\pi) \to H_4(B; \mathbb{Z}^w)$
  is contained in the preimage of the $A\otimes_{\mathbb{Z}\pi}A^{*}$ summand under the
  inclusion $\mathbb{Z}^w\otimes_{\mathbb{Z}\pi}\Gamma(\pi_2(B)) \subseteq \mathbb{Z}^w\otimes_{\mathbb{Z}\pi}\Gamma(A\oplus A^{*}\oplus\mathbb{Z}\pi^k)$.
\end{lemma}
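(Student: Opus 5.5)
Throughout, $\Phi$ denotes the composite $\mathbb{Z}^w\otimes_{\mathbb{Z}\pi}H_4(B;\mathbb{Z}\pi)\to H_4(B^s;\mathbb{Z}^w)\to H_4(B^{\oplus};\mathbb{Z}^w)$ that appears below.

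The plan is to transport the kernel of $\varphi_B$ into $B^{\oplus}$, where $\varphi$ can be analysed summand by summand. Naturality of reduction of coefficients under the maps $B \to B^s \to B^{\oplus}$ will do the transporting. Concretely, $\varphi$ is natural with respect to maps of $3$-coconnected spaces over $B\pi$, so the square formed by $\varphi_B$, $\varphi_{B^s}$ and the inclusion $B\to B^s$ commutes. Hence if $x\in\ker\varphi_B$, its image $x'$ in $\mathbb{Z}^w\otimes_{\mathbb{Z}\pi}\Gamma(A\oplus A^{*}\oplus\mathbb{Z}\pi^k)$ lies in $\ker\varphi_{B^s}$. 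Composing with the collapse $B^s\to B^{\oplus}$, the projection $x''$ of $x'$ to the summand $\mathbb{Z}^w\otimes_{\mathbb{Z}\pi}\Gamma(A\oplus A^{*})$ lies in $\ker\varphi_{B^{\oplus}}$.

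We must also control the components of $x'$ outside $\mathbb{Z}^w\otimes_{\mathbb{Z}\pi}\Gamma(A\oplus A^{*})$, namely those involving $\mathbb{Z}\pi^k$. For this I would use the other collapse maps $B^s\to$ (Postnikov $2$-type of $\vee^kS^2$), or more precisely $B^s \to B^{\oplus}\vee$-type retractions onto pieces with vanishing $k$-invariant. The pieces $\Gamma(\mathbb{Z}\pi^k)$ and $(A\oplus A^{*})\otimes_{\mathbb{Z}\pi}\mathbb{Z}\pi^k$ are detected by maps to $3$-coconnected spaces whose $k$-invariant vanishes; an example is the $2$-type of $\vee^k S^2$, which is $K(\mathbb{Z}\pi^k,2)$-fibred over $B\pi$ with a section. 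Then \cref{triv-k} says $\varphi$ is injective there, which should force those components of $x'$ to vanish. The mixed terms $A\otimes\mathbb{Z}\pi^k$ and $A^*\otimes\mathbb{Z}\pi^k$ need care, since they may not be detected by a single trivial-$k$ quotient. If this fails, I would instead argue that in the final statement only the projection to the complement of $A\otimes_{\mathbb{Z}\pi}A^{*}$ matters, and handle the $\mathbb{Z}\pi^k$ cross terms via the $\mathbb{Z}\pi^k\oplus A$ quotient with trivial $k$-invariant.

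Inside $B^{\oplus}$ the argument goes as follows. The $A$-part of $\pi_2$ carries trivial $k$-invariant. There is therefore a map $B^{\oplus}\to L_A$, the $3$-coconnected space with $\pi_2=A$ and zero $k$-invariant, inducing projection on $\pi_2$. By \cref{triv-k}, $\varphi_{L_A}$ is injective, so the $\mathbb{Z}^w\otimes\Gamma(A)$-component of $x''$ vanishes. For the $\Gamma(A^{*})$-component, use $B^{\oplus}\to P_2(K)$, which induces projection onto $A^*$. The spectral sequence of $P_2(K)\to B\pi$ has its $d_3$ differentials isomorphisms, as in the proof of \cref{dthreeiso}. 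Together with $H_4(K;\mathbb{Z}^w)=0$ and the $3$-connectedness of $K\to P_2(K)$, this should show $\varphi_{P_2(K)}$ is injective: the only other contributions to $H_4$ come from $H_p(\pi;\ldots)$ terms, which are killed. More precisely, $E^\infty_{0,4}$ is the quotient of $E^2_{0,4}=\mathbb{Z}^w\otimes\Gamma(A^*)$ by images of $d_3$ from $E^3_{3,2}$ and $d_5$ from $E_{5,0}$, and these images vanish because those sources are already killed by the isomorphisms $d_3$ from rows $0$ to $2$. So the $\Gamma(A^{*})$-component of $x''$ vanishes, leaving only the $A\otimes_{\mathbb{Z}\pi}A^{*}$ component.

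The main obstacle is the precise injectivity of $\varphi_{P_2(K)}$, i.e.\ showing that the differentials hitting $E_{0,4}$ vanish. A second difficulty is the bookkeeping of the mixed $\mathbb{Z}\pi^k$ terms, which can be avoided if the stabilisation can be chosen so that $B$ itself maps compatibly to $L_A$ and $P_2(K)$ through $B^s$.
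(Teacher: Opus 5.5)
Inside $B^{\oplus}$ your argument is sound and essentially the paper's. The map to $L_A$ together with \cref{triv-k} kills the $\mathbb{Z}^w\otimes_{\mathbb{Z}\pi}\Gamma(A)$-component. Your injectivity of $\varphi_{P_2(K)}$ is correct and equivalent to the paper's observation that $d_3\colon H_3(\pi;(A^{*})^w)\to\mathbb{Z}^w\otimes_{\mathbb{Z}\pi}\Gamma(A^{*})$ vanishes. Strictly, $E^3_{3,2}$ is the \emph{target} of the surjective $d_3$ from $E^3_{6,0}$, so the $d_3$ leaving it is zero because $d_3\circ d_3=0$; and $E^3_{5,0}$ dies because its $d_3$ is injective.

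The gap is the free summand. The lemma asserts that the image $x'$ of $x$ in $\mathbb{Z}^w\otimes_{\mathbb{Z}\pi}\Gamma(A\oplus A^{*}\oplus\mathbb{Z}\pi^k)$ lies in $A\otimes_{\mathbb{Z}\pi}A^{*}$. So its components in $\Gamma(\mathbb{Z}\pi^k)$, $A\otimes\mathbb{Z}\pi^k$ and $A^{*}\otimes\mathbb{Z}\pi^k$ must vanish, and knowing that the projection $x''$ lies in $\ker\varphi_{B^{\oplus}}$ says nothing about them. Your proposed trivial-$k$-invariant targets are $P_2(B\pi\vee^kS^2)$ and the space with $\pi_2=A\oplus\mathbb{Z}\pi^k$. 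Between them they detect only $\Gamma(A)$, $\Gamma(\mathbb{Z}\pi^k)$ and $A\otimes\mathbb{Z}\pi^k$. The $A^{*}\otimes_{\mathbb{Z}\pi}\mathbb{Z}\pi^k$-component, which is nonzero in general, is never controlled. Detecting it needs a target whose $\pi_2$ sees $A^{*}$, where \cref{triv-k} is not available. You flag this yourself as unresolved, so as written the proof is incomplete.

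The missing idea is how the paper disposes of $\mathbb{Z}\pi^k$ all at once. In the Serre spectral sequence of $B^s$, rows $1$ and $3$ vanish and $E_{5,0}$ dies at $E^4$ by \cref{dthreeiso}. Hence $\ker\varphi_{B^s}$ is exactly the image of $d_3$ out of $E^3_{3,2}=H_3(\pi;\pi_2(B^s)^w)$. Since $H_3(\pi;\mathbb{Z}\pi^w)=0$, this group is $H_3(\pi;(A\oplus A^{*})^w)$, coming from $B^{\oplus}$. Naturality along $B^{\oplus}\to B^s$ then puts all of $\ker\varphi_{B^s}$, hence $x'$, inside the summand $\mathbb{Z}^w\otimes_{\mathbb{Z}\pi}\Gamma(A\oplus A^{*})$. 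Only after that does the $B^{\oplus}$ analysis finish the proof.

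Alternatively, you can keep your injectivity method and apply it to $B^s$ directly. Use the trivial-$k$-invariant space with $\pi_2=A\oplus\mathbb{Z}\pi^k$, and $P_2(K\vee^kS^2)$ with $\pi_2=A^{*}\oplus\mathbb{Z}\pi^k$; both receive maps from $B^s$ because $k_{B^s}$ has components $(k_K,0,0)$. Your $E_{0,4}$ argument for $P_2(K)$ goes through verbatim for $P_2(K\vee^kS^2)$, since $H_p(\pi;\mathbb{Z}\pi^w)=0$ for $p>0$. Finally, every summand of $\Gamma(A\oplus A^{*}\oplus\mathbb{Z}\pi^k)$ other than $A\otimes A^{*}$ lies in $\Gamma(A\oplus\mathbb{Z}\pi^k)$ or $\Gamma(A^{*}\oplus\mathbb{Z}\pi^k)$.
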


\begin{proof}
  Consider the Serre spectral sequence of the fibration $\wt{B^{\oplus}} \to
  B^{\oplus} \to B\pi$ that has $E^2$ page $H_p(\pi; H_q(\wt{B^{\oplus}}; \mathbb{Z})^w)$ and
  converges to $H_{p+q}(B^{\oplus}; \mathbb{Z}^w)$. The map $\varphi_{B^{\oplus}}$ is an edge
  homomorphism in this spectral sequence. Since $B^{\oplus}$ is
  $3$-coconnected, by the Hurewicz theorem we have that
  $H_3(\wt{B^{\oplus}}; \mathbb{Z}) = 0 = H_1(\wt{B^{\oplus}}; \mathbb{Z})$, so all entries in the
  first and third rows of the $E^2$ page vanish. The differentials
  $d_3 \colon H_p(\pi; \mathbb{Z}^w) \to H_{p-3}(\pi; (A^{*}\oplus A)^w) = H_{p-3}(\pi;
  (A^{*})^w)\oplus H_{p-3}(\pi; A^w)$ are given by inclusion of the first
  summand by \cref{dthreeiso}. In particular, $E_{5, 0}^3 = H_5(\pi;
  \mathbb{Z}^w)$ is killed by $d_3$ and does not survive to the $E^5$ page, so
  the kernel of the map $\varphi_{B^{\oplus}}$ is isomorphic to the image of the
  differential $d_3 \colon H_3(\pi; (A^{*})^w)\oplus H_3(\pi; A^w) \to
  \mathbb{Z}^w\otimes_{\mathbb{Z}\pi}H_4(\wt{B^{\oplus}}; \mathbb{Z}) = \mathbb{Z}^w\otimes_{\mathbb{Z}\pi}\Gamma(A\oplus A^{*})$.

  The same analysis applies unchanged for $B^s$, and naturality of the
  spectral sequence yields that $\ker\varphi_{B^s} \cong \ker\varphi_{B^{\oplus}}$ and is
  contained in the $\mathbb{Z}^w\otimes_{\mathbb{Z}\pi}\Gamma(A\oplus A^{*})$ summand of $\mathbb{Z}^w\otimes_{\mathbb{Z}\pi}\Gamma(A\oplus
  A^{*}\oplus\mathbb{Z}\pi^k)$. But on the other hand the map $B \to B^s$ induces an
  injection $\ker\varphi_B \to \ker\varphi_{B^s}$. Hence to prove the lemma, we have
  to show that the kernel of $\varphi_{B^{\oplus}}$ is contained in the $A\otimes_{\mathbb{Z}\pi}
  A^{*}$ summand of $\mathbb{Z}^w\otimes_{\mathbb{Z}\pi}\Gamma(A \oplus A^{*})$.

  For this we will show that the kernel of $\varphi_{B^{\oplus}}$ projects onto
  the $\mathbb{Z}^w\otimes_{\mathbb{Z}\pi}\Gamma(A)$ and $\mathbb{Z}^w\otimes_{\mathbb{Z}\pi}\Gamma(A^{*})$ summands trivially. For
  the latter, let $P_2(K)$ denote the Postnikov $2$-type of $K$. As in
  the proof of \cref{dthreeiso}, since the $k$-invariants agree we
  have a map $B^{\oplus} \to P_2(K)$ inducing a map of fibrations. In the
  analagous spectral sequence for $K(A^{*}, 2) \to P_2(K) \to B\pi$, the
  differential $d_3 \colon H_3(\pi; (A^{*})^w) \to \mathbb{Z}^w\otimes_{\mathbb{Z}\pi}H_4(K(A^{*},
  2); \mathbb{Z}) = \mathbb{Z}^w\otimes_{\mathbb{Z}\pi}\Gamma(A^{*})$ is trivial, since the differential $d_3
  \colon H_6(\pi; \mathbb{Z}^w) \to H_3(\pi; (A^{*})^w)$ is again an isomorphism as
  in the proof of \cref{dthreeiso}. Naturality of the spectral
  sequence induces the commutative diagram
  \begin{equation*}
    \begin{tikzcd}
      H_3(\pi; (A \oplus A^{*})^w) \arrow[d] \arrow[r, "d_3"]
      & \mathbb{Z}^w\otimes_{\mathbb{Z}\pi}\Gamma(A\oplus A^{*}) \arrow[d] \\
      H_3(\pi; (A^{*})^w) \arrow[r, "d_3", "0"']
      & \mathbb{Z}^w\otimes_{\mathbb{Z}\pi}\Gamma(A^{*}),
    \end{tikzcd}
  \end{equation*}
  where the vertical maps are the projections. Hence the image of
  $d_3$, and therefore the kernel of $\varphi_{B^{\oplus}}$, projects onto the
  $\mathbb{Z}^w\otimes_{\mathbb{Z}\pi}\Gamma(A^{*})$ summand trivially.

  On the other hand, let $L$ denote the $3$-coconnected space with
  $\pi_1 \cong \pi$, $\pi_2 \cong A$ and trivial $k$-invariant. The $k$-invariant of
  $B^{\oplus}$ has trivial component in $H^3(\pi; A) \leq H^3(\pi; A\oplus A^{*})$, so
  we have a map $B^{\oplus} \to L$ inducing a commutative diagram
  \begin{equation*}
    \begin{tikzcd}
      \mathbb{Z}^w\otimes_{\mathbb{Z}\pi}\Gamma(A\oplus A^{*}) \arrow[d] \arrow[r, "\varphi_{B^{\oplus}}"]
      & H_4(B^{\oplus}; \mathbb{Z}^w) \arrow[d] \\
      \mathbb{Z}^w\otimes_{\mathbb{Z}\pi}\Gamma(A) \arrow[r, "\varphi_L"]
      & H_4(L; \mathbb{Z}^w),
    \end{tikzcd}
  \end{equation*}
  where the left vertical map is the projection. Now $\varphi_L$ is
  injective by \cref{triv-k}, hence the kernel of $\varphi_{B^{\oplus}}$ projects
  onto the $\mathbb{Z}^w\otimes_{\mathbb{Z}\pi}\Gamma(A)$ summand trivially, as required.
\end{proof}

Next we will prove the main theorem of this section, but for this we
need to introduce an assumption on the fundamental group. Recall that
a $\mathbb{Z}\pi$-module $L$ is called \emph{torsionless} if it includes into a
free module. Equivalently, $L$ is torsionless if the natural map
$\ev_L \colon L \to L^{**}$ sending $a\in L$ to $(f\mapsto\ol{f(a)})\in
L^{**}$ is injective, and $L$ is \emph{reflexive} if $\ev_L$ is an
isomorphism. In this section we will need the assumption that $A$ is
torsionless, while reflexivity will be assumed in \cref{sec-diff}.

\begin{remark}
  \label{assum-rem}
  These assumptions can be formulated in terms of the cohomology of
  $\pi$, since by the universal coefficients spectral sequence there is
  an exact sequence as follows (see \cite[Remark~4.2]{KPR}):
  \[
    0 \to H^2(\pi; \mathbb{Z}\pi) \to A \xrightarrow{\ev_A} A^{**} \to H^3(\pi; \mathbb{Z}\pi) \to 0.
  \]
  By this sequence $A$ being torsionless is equivalent to having
  $H^2(\pi; \mathbb{Z}\pi)$ vanish and reflexivity is equivalent to in addition
  having $H^3(\pi; \mathbb{Z}\pi)$ vanish. For example, both conditions are
  satisfied by virtual duality groups of dimension $1$ or at least
  $4$.
\end{remark}

Recall that $\mathbb{Z}^w\otimes_{\mathbb{Z}\pi}\Gamma(\pi_2(B))$ is a direct summand of $\mathbb{Z}^w\otimes_{\mathbb{Z}\pi}\Gamma(A\oplus
A^{*}\oplus\mathbb{Z}\pi^k)$, so by \cref{dsum-rem} the map $\mathcal{B}_{\pi_2(B)}$ is a direct
summand of $\mathcal{B}_{A\oplus A^{*}\oplus\mathbb{Z}\pi^k}$, which on the other hand also includes
the map $A^{*} \otimes_{\mathbb{Z}\pi} A \to \Hom_{\mathbb{Z}\pi}(A^{**}, A^{**})$ as a direct
summand.

\begin{theorem}
  \label{ker_phi2}
  Let $L$ be the preimage of the
  kernel of the map $A^{*} \otimes_{\mathbb{Z}\pi} A \to \Hom_{\mathbb{Z}\pi}(A^{**}, A^{**})$ under
  the inclusion $\mathbb{Z}^w\otimes_{\mathbb{Z}\pi}\Gamma(\pi_2(B)) \subseteq \mathbb{Z}^w\otimes_{\mathbb{Z}\pi}\Gamma(A\oplus A^{*}\oplus\mathbb{Z}\pi^k)$. Then
  $L$ is contained in the kernel of the map $\varphi_B \colon
  \mathbb{Z}^w\otimes_{\mathbb{Z}\pi}H_4(B; \mathbb{Z}\pi) \to H_4(B; \mathbb{Z}^w)$.
\end{theorem}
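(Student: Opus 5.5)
The plan is to show that every element of $L$ lies in the image of the differential $d_3$ that hits $\mathbb{Z}^w\otimes_{\mathbb{Z}\pi}H_4$ in the Serre spectral sequence. By the edge-homomorphism description in the proof of \cref{ker_phi1}, that image is exactly $\ker\varphi$. I will first reduce from $B$ to $B^{\oplus}$.

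\emph{Reduction to $B^{\oplus}$.} The maps $B\to B^s\to B$ compose to the identity and commute with $\varphi$, by naturality of reduction of coefficients. Hence if $x\in\mathbb{Z}^w\otimes_{\mathbb{Z}\pi}\Gamma(\pi_2(B))$ maps into $\ker\varphi_{B^s}$, then applying the collapse $B^s\to B$ shows $x\in\ker\varphi_B$. The same argument with $B^{\oplus}\to B^s\to B^{\oplus}$ lets us move between $B^{\oplus}$ and $B^s$. By definition of $L$, the image of $x\in L$ in $\mathbb{Z}^w\otimes_{\mathbb{Z}\pi}\Gamma(A\oplus A^{*}\oplus\mathbb{Z}\pi^k)$ lies in the summand $A^{*}\otimes_{\mathbb{Z}\pi}A$ and in the kernel $N$ of $A^{*}\otimes_{\mathbb{Z}\pi}A\to\Hom_{\mathbb{Z}\pi}(A^{**},A^{**})$. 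This summand comes from $B^{\oplus}$. It therefore suffices to prove $N\subseteq\ker\varphi_{B^{\oplus}}=\im\bigl(d_3\colon H_3(\pi;(A\oplus A^{*})^w)\to\mathbb{Z}^w\otimes_{\mathbb{Z}\pi}\Gamma(A\oplus A^{*})\bigr)$.

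\emph{Identifying $N$ with a $\Tor$-group.} Recall $A^{*}\cong\ker\partial_2\subseteq C_2$. The sequence $0\to A^{*}\to C_2\to C_1\to C_0\to\mathbb{Z}\to0$ is exact, since $H_1(\wt K)=0$ and $\pi_2(K)=\ker\partial_2$. Dimension shifting along it gives
\[
H_3(\pi;A^w)\cong\Tor_1^{\mathbb{Z}\pi}(\im\partial_2,A)\cong\ker\bigl(A^{*}\otimes_{\mathbb{Z}\pi}A\to C_2\otimes_{\mathbb{Z}\pi}A\bigr).
\]
Next I claim that $N$ is contained in this kernel; here the torsionless hypothesis on $A$ enters. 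Since $C_2$ is free, $C_2\otimes_{\mathbb{Z}\pi}A\cong\Hom_{\mathbb{Z}\pi}(C_2^{*},A)$, and this injects into $\Hom_{\mathbb{Z}\pi}(C_2^{*},A^{**})$ because $\ev_A$ is injective. The composite $A^{*}\otimes_{\mathbb{Z}\pi}A\to C_2\otimes_{\mathbb{Z}\pi}A\hookrightarrow\Hom(C_2^{*},A^{**})$ agrees with the composite $A^{*}\otimes_{\mathbb{Z}\pi}A\to\Hom(A^{**},A^{**})\to\Hom(C_2^{*},A^{**})$. The second arrow is restriction along $C_2^{*}\to A^{**}$, the dual of $A^{*}\hookrightarrow C_2$. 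Both composites send $f\otimes a$ to $\phi\mapsto\ol{\phi(f)}\,\ev_A(a)$, up to the involution conventions, which I would check directly. So an element of $N$ dies in $\Hom(C_2^{*},A^{**})$, hence in $C_2\otimes_{\mathbb{Z}\pi}A$. This gives $N\subseteq\ker(A^{*}\otimes_{\mathbb{Z}\pi}A\to C_2\otimes_{\mathbb{Z}\pi}A)\cong H_3(\pi;A^w)$.

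\emph{Identifying $d_3$.} The remaining step, which I expect to be the main obstacle, is to show that $d_3$ restricted to the summand $H_3(\pi;A^w)$ lands in $A^{*}\otimes_{\mathbb{Z}\pi}A$ and is, up to sign, exactly the inclusion of $\ker(A^{*}\otimes_{\mathbb{Z}\pi}A\to C_2\otimes_{\mathbb{Z}\pi}A)$ from the dimension shift. The first part already follows from the proof of \cref{ker_phi1}: the projection of $\im d_3$ to $\mathbb{Z}^w\otimes_{\mathbb{Z}\pi}\Gamma(A)$ and to $\mathbb{Z}^w\otimes_{\mathbb{Z}\pi}\Gamma(A^{*})$ vanishes. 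For the explicit formula I would compute $d_3$ from the $k$-invariant of $B^{\oplus}$, which is the $k$-invariant of $K$ placed in $H^3(\pi;A^{*})$. Concretely, I would model $B^{\oplus}$ as $P_2(K)\times_{B\pi}L_A$, where $L_A$ is the split space with $\pi_2=A$, and compare with the Künneth-type decomposition $\Gamma(A\oplus A^{*})\supseteq A\otimes A^{*}$. The cross term of $d_3$ should then be the product of the $P_2(K)$-differential $H_*(\pi;\mathbb{Z}^w)\to H_{*-3}(\pi;(A^{*})^w)$, which is an isomorphism by \cref{dthreeiso}, with the identity on $A$. Unwinding this through the partial resolution $C_*$ should yield the connecting map of $0\to A^{*}\to C_2\to\im\partial_2\to0$ tensored with $A$, i.e.\ the identification above. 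If this product description proves awkward, I would instead verify the formula on the double $D(K)$ chain-level, using its cellular chain complex $C_*\oplus C^{4-*}$.
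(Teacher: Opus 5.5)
\textbf{Verdict: the proposal has a genuine gap at its third step.} Your reduction to $B^{\oplus}$ is correct and is exactly the paper's first step. So is the torsionless step, which shows $N\subseteq\ker\theta$ for $\theta\colon A^{*}\otimes_{\mathbb{Z}\pi}A\to C_2\otimes_{\mathbb{Z}\pi}A$; this is the paper's second step.

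The third step carries all the content of the theorem. You need $\ker\theta\subseteq\ker\varphi_{B^{\oplus}}=\im d_3$, but you only assert what the component $H_3(\pi;A^w)\to A^{*}\otimes_{\mathbb{Z}\pi}A$ of $d_3$ ``should'' be. A K\"unneth-type ``product of differentials'' for $P_2(K)\times_{B\pi}L_A$ is not a formal property of Serre spectral sequences with local coefficients. Justifying it needs a chain-level argument:
\begin{itemize}
\item take the tensor product $C_*(\widetilde{P_2(K)})\otimes_{\mathbb{Z}}C_*(\widetilde{L_A})$ with diagonal action;
\item split off the section of $L_A\to B\pi$;
\item map a shifted resolution of $A$ into the relative chains;
\item invoke naturality of the hyperhomology spectral sequence.
\end{itemize}
After that, the resulting differential still has to be computed. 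The information from \cref{ker_phi1}, that $\im d_3$ avoids the $\Gamma(A)$ and $\Gamma(A^{*})$ summands, is only an upper bound on $\ker\varphi_{B^{\oplus}}$, not the lower bound you need. As written, nothing shows that even one nonzero element of $\ker\theta$ dies under $\varphi_{B^{\oplus}}$.

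The paper avoids computing any differential over $B\pi$ by changing the base. It uses the Serre spectral sequence of $K(A,2)\to B^{\oplus}\to P_2(K)$ with $w$-twisted coefficients, compared with that of $K(A,2)\to\widetilde{B^{\oplus}}\to K(A^{*},2)$. The comparison on $E^2_{2,2}$-terms is $A^{*}\otimes_{\mathbb{Z}}A\to H_2(P_2(K);A^w)=H_2(K;A^w)\subseteq C_2\otimes_{\mathbb{Z}\pi}A$, and this factors through $\theta$. So for $z\in\ker\theta$, the image of $(0,z)$ in $H_4(B^{\oplus};\mathbb{Z}^w)$ lies in the image of $E^{\infty}_{0,4}$, which is a quotient of $\mathbb{Z}^w\otimes_{\mathbb{Z}\pi}\Gamma(A)$. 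Hence $(-g,z)\in\ker\varphi_{B^{\oplus}}$ for some $g\in\mathbb{Z}^w\otimes_{\mathbb{Z}\pi}\Gamma(A)$, and \cref{ker_phi1} forces $g=0$.

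If you want to keep your route, the efficient completion is to prove one identification. Namely, show that your cross term of $d_3$ coincides with the $d_3$ of the Serre spectral sequence of $P_2(K)\to B\pi$ with coefficients in $A^w$. In that spectral sequence the edge map $A^{*}\otimes_{\mathbb{Z}\pi}A\to H_2(P_2(K);A^w)$ is $\theta$. Also $E_{2,1}=0$, so only $d_3$ hits $E_{0,2}$. This gives $\ker\theta=\im d_3$ immediately, with no unwinding of dimension shifts.
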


\begin{proof}
  We have the commutative diagram
  \begin{equation*}
    \begin{tikzcd}[row sep=small]
      \mathbb{Z}^w\otimes_{\mathbb{Z}\pi}\Gamma(A\oplus A^{*}) \arrow[r, "\varphi_{B^{\oplus}}"] \arrow[d]
      & H_4(B^{\oplus}; \mathbb{Z}^w) \arrow[d] \\
      \mathbb{Z}^w\otimes_{\mathbb{Z}\pi}\Gamma(A\oplus A^{*}\oplus\mathbb{Z}\pi^k) \arrow[r, "\varphi_{B^s}"] \arrow[d]
      & H_4(B^s; \mathbb{Z}^w) \arrow[d] \\
      \mathbb{Z}^w\otimes_{\mathbb{Z}\pi}\Gamma(\pi_2(B)) \arrow[r, "\varphi_B"]
      & H_4(B; \mathbb{Z}^w).
    \end{tikzcd}
  \end{equation*}
  Since $\mathbb{Z}^w\otimes_{\mathbb{Z}\pi}\Gamma(\pi_2(B))$ is a direct summand of
  $\mathbb{Z}^w\otimes_{\mathbb{Z}\pi}\Gamma(A\oplus A^{*}\oplus\mathbb{Z}\pi^k)$, we have that $L$ pulls back to the
  kernel of the map $A^{*} \otimes_{\mathbb{Z}\pi} A \to \Hom_{\mathbb{Z}\pi}(A^{**}, A^{**})$ on
  the top row of the diagram. Therefore it is enough to prove that
  this kernel is contained in the kernel of the map $\varphi_{B^{\oplus}}$.
  
  Now recall that $A^{*}$ is isomorphic to $\ker d_2$ in the chain
  complex $(C_{*}, d_{*})$ and consider the commutative diagram
  \begin{equation*}
    \begin{tikzcd}[column sep = small]
      A^{*}\otimes_{\mathbb{Z}\pi}A \arrow[r, "\theta"] \arrow[d]
      & C_2 \otimes_{\mathbb{Z}\pi} A \arrow[d] \\
      \Hom_{\mathbb{Z}\pi}(A^{**}, A^{**}) \arrow[r]
      & \Hom_{\mathbb{Z}\pi}(C_2^{*}, A^{**}).
    \end{tikzcd}
  \end{equation*}
  The right map can be identified with the map $A\otimes_{\mathbb{Z}\pi}C_2 \to
  \Hom_{\mathbb{Z}\pi}(A^{*}, C_2^{**})$, which on the other hand can be
  identified with a sum of copies of $\ev_A$. By our assumption that
  $A$ is torsionless, this is injective. Hence in the diagram, the
  kernel of the left map, which is the map we are interested in, is
  contained in the kernel of the top map $\theta$.

  By definition the group $H_2(K; A^w)$ is computed by tensoring the
  cellular chain complex of right modules of $\wt K$ with the
  left module $A^w$. This complex is the same as the left-module
  resolution $(C_{*}^w, d_{*}^w)$ of the left module $\mathbb{Z}^w$ turned into
  right modules via the involution on $\mathbb{Z}\pi$ as per our
  conventions. Hence we have that $H_2(K; A^w)$ is the kernel of the
  map $C_2^w\otimes_{\mathbb{Z}\pi}A^w \to C_1^w\otimes_{\mathbb{Z}\pi}A^w$. The twistings therefore
  cancel and we can identify $H_2(K; A^w)$ with the kernel of the map
  $C_2\otimes_{\mathbb{Z}\pi}A \to C_1\otimes_{\mathbb{Z}\pi}A$, which contains the image of
  $\theta$. Moreover, the Postnikov $2$-type $P_2(K)$ of $K$ also has
  $H_2(P_2(K); A^w)$ the kernel of the map $C_2^w\otimes_{\mathbb{Z}\pi}A^w \to
  C_1^w\otimes_{\mathbb{Z}\pi}A^w$, as $P_2(K)$ is obtained from $K$ by adding cells in
  dimension $4$ and higher. Therefore we write $\theta$ as a map
  \[
    \theta \colon A^{*}\otimes_{\mathbb{Z}\pi}A \to H_2(P_2(K); A^w).
  \]

  Note that the decomposition of $H_4(\wt {B^{\oplus}}; \mathbb{Z}) \cong \Gamma(A\oplus A^{*})$
  into $\Gamma(A)\oplus(A\otimes_{\mathbb{Z}} A^{*})\oplus\Gamma(A^{*})$ from \cref{dsum-rem} can be seen
  in the Serre spectral sequence of the fibration $K(A, 2) \to
  \wt {B^{\oplus}} \to K(A^{*}, 2)$, where these summands appear on the
  $4$-diagonal of the $E^2$ page as
  \begin{align*}
    H_0(K(A^{*}, 2); H_4(K(A, 2); \mathbb{Z})) & \cong \Gamma(A) \\
    H_2(K(A^{*}, 2); H_2(K(A, 2); \mathbb{Z})) & \cong A^{*}\otimes_{\mathbb{Z}} A \\
    H_4(K(A^{*}, 2); H_0(K(A, 2); \mathbb{Z})) & \cong \Gamma(A^{*}).
  \end{align*}

  Compare this to the Serre spectral sequence for the fibration $K(A,
  2) \to B^{\oplus} \to P_2(K)$, this time with coefficients twisted by $w$. The
  map between the $E_{2,2}^2$-terms is the natural map $A^{*}\otimes_{\mathbb{Z}} A \to
  H_2(P_2(K); A^w)$, which factors through $\theta$. The map
  $H_4(\wt {B^{\oplus}}; \mathbb{Z}) \to H_4(B^{\oplus};\mathbb{Z}^w)$ is also induced by the
  comparison, and factors through $\varphi_{B^{\oplus}}$. By naturality we have the
  following commutative diagram
  \begin{equation*}
    \begin{tikzcd}[column sep=tiny]
      \mathbb{Z}^w\otimes_{\mathbb{Z}\pi}H_4(\wt {B^{\oplus}}; \mathbb{Z}) \arrow[r, "\varphi_{B^{\oplus}}"]
      & H_4(B^{\oplus}; \mathbb{Z}^w) \\
      (\mathbb{Z}^w\otimes_{\mathbb{Z}\pi}\Gamma(A))\oplus(A^{*}\otimes_{\mathbb{Z}\pi}A) \arrow[r] \arrow[u, hook] \arrow[d]
      & F_{2,2} \arrow[u, hook] \arrow[d] \\
      A^{*}\otimes_{\mathbb{Z}\pi}A \arrow[r, "\theta"]
      & H_2(P_2(K); A^w)/\im d_3,
    \end{tikzcd}
  \end{equation*}
  where $F_{2,2}$ is a subgroup in the filtration computing
  $H_4(B^{\oplus}; \mathbb{Z}^w)$ and the lower left vertical map is the
  projection. An element in the kernel of $\theta$ lifts to some element in
  the kernel of the middle horizontal map. However, by \cref{ker_phi1}
  this is precisely the kernel of $\varphi_{B^{\oplus}}$ and the lift must have
  no component in the $\mathbb{Z}^w\otimes_{\mathbb{Z}\pi}\Gamma(A)$ summand. Thus $\ker\theta \subseteq
  \ker\varphi_{B^{\oplus}}$ as desired, and the proof is complete.
\end{proof}

\begin{corollary}
  \label{ker-phi-cor}
  Suppose $A$ is torsionless and the map $\ev^{*} \colon
  \Her^w(H_2(B; \mathbb{Z}\pi)^{*}) \to \Her^w(H^2(B; \mathbb{Z}\pi))$ is injective. Then the
  kernel of the map $\varphi_B \colon \mathbb{Z}^w\otimes_{\mathbb{Z}\pi}H_4(B; \mathbb{Z}\pi) \to H_4(B; \mathbb{Z}^w)$
  coincides with the preimage of the kernel of the map $A^{*} \otimes_{\mathbb{Z}\pi} A
  \to \Hom_{\mathbb{Z}\pi}(A^{**}, A^{**})$ under the inclusion of
  $\mathbb{Z}^w\otimes_{\mathbb{Z}\pi}\Gamma(\pi_2(B))$ into $\mathbb{Z}^w\otimes_{\mathbb{Z}\pi}\Gamma(A\oplus A^{*}\oplus\mathbb{Z}\pi^k)$.
\end{corollary}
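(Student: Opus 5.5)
One inclusion is exactly \cref{ker_phi2}: with $L$ the preimage of $\ker\bigl(A^{*}\otimes_{\mathbb{Z}\pi}A \to \Hom_{\mathbb{Z}\pi}(A^{**},A^{**})\bigr)$, we have $L\subseteq\ker\varphi_B$. That theorem already uses that $A$ is torsionless, so nothing further is needed there. The work is in the reverse inclusion $\ker\varphi_B\subseteq L$, and here the hypothesis on $\ev^{*}$ is used.

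Let $x\in\ker\varphi_B$. By commutativity of the square in \cref{comm-diag}, $\ev^{*}\mathcal{B}_{\pi_2(B)}(x)=\Theta_B\varphi_B(x)=0$. Since $\ev^{*}$ is injective, $\mathcal{B}_{\pi_2(B)}(x)=0$. We now move to the stabilized setting. By \cref{dsum-rem}, $\mathcal{B}_{\pi_2(B)}$ is the restriction of $\mathcal{B}_{A\oplus A^{*}\oplus\mathbb{Z}\pi^k}$ to the direct summand $\mathbb{Z}^w\otimes_{\mathbb{Z}\pi}\Gamma(\pi_2(B))$. The target $\Her^w(\pi_2(B)^{*})$ is likewise a direct summand of $\Her^w((A\oplus A^{*}\oplus\mathbb{Z}\pi^k)^{*})$, compatibly with the inclusion and collapse maps. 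Hence the image $\iota(x)$ in $\mathbb{Z}^w\otimes_{\mathbb{Z}\pi}\Gamma(A\oplus A^{*}\oplus\mathbb{Z}\pi^k)$ lies in $\ker\mathcal{B}_{A\oplus A^{*}\oplus\mathbb{Z}\pi^k}$. One point needs checking: the summand decomposition for $\pi_2(B)\oplus\mathbb{Z}\pi^j\cong A\oplus A^{*}\oplus\mathbb{Z}\pi^k$ must be natural for $\mathcal{B}$ under the chosen isomorphism. This holds because $\mathcal{B}$ is natural in module isomorphisms.

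Next, by \cref{ker_phi1}, $\iota(x)$ lies in the $A\otimes_{\mathbb{Z}\pi}A^{*}$ summand of $\mathbb{Z}^w\otimes_{\mathbb{Z}\pi}\Gamma(A\oplus A^{*})\subseteq \mathbb{Z}^w\otimes_{\mathbb{Z}\pi}\Gamma(A\oplus A^{*}\oplus\mathbb{Z}\pi^k)$. The splitting of \cref{dsum-rem}, applied to $A\oplus A^{*}$ and then to $(A\oplus A^{*})\oplus\mathbb{Z}\pi^k$, shows that $\mathcal{B}$ restricted to this summand is the displayed map $A^{*}\otimes_{\mathbb{Z}\pi}A\to\Hom_{\mathbb{Z}\pi}(A^{**},A^{**})$, landing in the corresponding direct summand of the Hermitian forms. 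So $\mathcal{B}(\iota(x))=0$ forces $\iota(x)$ to lie in the kernel of that map, that is, $x\in L$.

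The step I expect to need the most care is this bookkeeping: making sure the $A\otimes A^{*}$ summand in \cref{ker_phi1} and the summand in \cref{ker_phi2} are the same, with matching conventions for $A\otimes_{\mathbb{Z}\pi}A^{*}$ versus $A^{*}\otimes_{\mathbb{Z}\pi}A$ under the involution. One must also confirm that the cross-term component of $\mathcal{B}$ lands in a summand of $\Her^w$ independent of the others, so that vanishing of the total form gives vanishing of that component. Once these identifications are fixed, the two inclusions combine to give equality.
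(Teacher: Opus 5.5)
Your proposal is correct and follows the paper's own argument. The paper also takes the inclusion $L\subseteq\ker\varphi_B$ from \cref{ker_phi2}. For the reverse inclusion it likewise uses injectivity of $\ev^{*}$ with \cref{comm-diag} to get $\ker\varphi_B\subseteq\ker\mathcal{B}_{\pi_2(B)}$, then combines this with \cref{ker_phi1} and the splitting of \cref{dsum-rem}; your summand bookkeeping spells out what the paper's two-line proof leaves implicit.
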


\begin{proof}
  The assumption that the map $\ev^{*}$ is injective grants us that
  $\ker\varphi_B \subseteq \ker\mathcal{B}_{\pi_2(B)}$ by virtue of \cref{comm-diag}. Combining
  \cref{ker_phi1,ker_phi2} then proves the corollary.
\end{proof}

\begin{remark}
  The injectivity assumption on $\ev^{*}$ is implied by, but not
  equivalent to, the condition that $H^3(\pi; \mathbb{Z}\pi) = 0$. See
  \cite[Section~5]{KPR}.
\end{remark}

\section{The image of $\varphi_B$}
\label{sec-diff}

For this section we keep the assumption that $c_{*}[M] = 0 \in H_4(\pi;
\mathbb{Z}^w)$ and all of the notation from the previous section, and we show
that the kernel of the map
\begin{align*}
  \Theta_B \colon H_4(B; \mathbb{Z}^w) & \to \Her^w(H^2(B; \mathbb{Z}\pi)) \\
  x                     & \mapsto ((\alpha,\beta)\mapsto\langle\beta,\alpha\cap x\rangle)
\end{align*}
is contained in the image of the map $\varphi_B \colon \mathbb{Z}^w\otimes_{\mathbb{Z}\pi}H_4(B;
\mathbb{Z}\pi) \to H_4(B; \mathbb{Z}^w)$, subject to the assumptions that the module $A$ is
reflexive and that the group $H_2(\pi; A^w)$ is cyclic of prime order.

The proofs in this section are generalized from
\cite[Section~8]{KPR}.

\begin{lemma}[cf.\ {\cite[Lemma~8.11]{KPR}}]
  \label{nontriv-im}
  Suppose that $H_2(\pi; A^w)$ is finite cyclic. Then there is an exact
  sequence
  \[
    \mathbb{Z}^w \otimes_{\mathbb{Z}\pi} H_4(B; \mathbb{Z}\pi) \xrightarrow{\varphi_B} H_4(B; \mathbb{Z}^w) \to H_2(\pi;
    A^w) \to 0,
  \]
  where the image of $f_{*}[M]$ generates $H_2(\pi; A^w)$.
\end{lemma}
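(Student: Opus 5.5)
We will read the exact sequence off the Serre spectral sequence with local coefficients of the fibration $\wt B \to B \to B\pi$. It has $E^2_{s,t} = H_s(\pi; H_t(\wt B;\mathbb{Z})^w)$ and converges to $H_{s+t}(B;\mathbb{Z}^w)$. Since $\wt B$ is $3$-coconnected and simply connected, the Hurewicz theorem gives $H_1(\wt B)=H_3(\wt B)=0$, so only the rows $t=0,2,4,\dots$ contribute on the $4$-diagonal. These contributions are $E^2_{0,4}=\mathbb{Z}^w\otimes_{\mathbb{Z}\pi}\Gamma(\pi_2(B))$, $E^2_{2,2}=H_2(\pi;\pi_2(B)^w)$ and $E^2_{4,0}=H_4(\pi;\mathbb{Z}^w)$. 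The edge homomorphism $E^2_{0,4}\to E^\infty_{0,4}\subseteq H_4(B;\mathbb{Z}^w)$ is $\varphi_B$. Consequently $\coker\varphi_B$ has a filtration with graded pieces $E^\infty_{2,2}$ and $E^\infty_{4,0}$, and the plan is to identify these.

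First, $E^\infty_{4,0}=0$. By \cref{dthreeiso}, $d_3\colon H_4(\pi;\mathbb{Z}^w)\to H_1(\pi;\pi_2(B)^w)$ is injective, so nothing survives at $(4,0)$.

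Second, we compute $E^\infty_{2,2}$. Write $\pi_2(B)\oplus\mathbb{Z}\pi^j\cong A\oplus A^*\oplus\mathbb{Z}\pi^k$. Free summands have vanishing $H_2$, so $E^2_{2,2}=H_2(\pi;(A^*)^w)\oplus H_2(\pi;A^w)$. The incoming differential $d_3\colon E^3_{5,0}=H_5(\pi;\mathbb{Z}^w)\to E^3_{2,2}$ has image exactly the first summand by \cref{dthreeiso}. There are no $d_2$'s into or out of $(2,2)$, since the odd rows vanish. The outgoing differentials land in $E_{0,3}=0$ and beyond, so they are zero. Hence $E^\infty_{2,2}\cong H_2(\pi;A^w)$, and we obtain the exact sequence $\mathbb{Z}^w\otimes_{\mathbb{Z}\pi}H_4(B;\mathbb{Z}\pi)\to H_4(B;\mathbb{Z}^w)\to H_2(\pi;A^w)\to 0$. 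Here the second map is the composite $H_4(B;\mathbb{Z}^w)\to F_{2,2}/F_{0,4}\cong E^\infty_{2,2}$, which is defined on all of $H_4$ because $E^\infty_{4,0}=0$. One point needs care: \cref{dthreeiso} is stated for the $d_3$ differential out of $E^3_{p,0}$, so we must check that $d_2$ does not alter these entries. This holds because $d_2$ maps between rows differing by one, and those rows are zero.

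Third, and this is the main obstacle, we must show that the image of $f_*[M]$ generates $H_2(\pi;A^w)$. This is the step where finite cyclicity is used, and we would follow \cite[Lemma~8.11]{KPR}. The idea is that the image of $f_*[M]$ in $E^\infty_{2,2}$ is detected by the cap product $c^*(-)\cap[M]$ on $H^2(\pi;-)$ classes. Concretely, Poincaré duality for $M$ gives $H^2(M;\mathbb{Z}\pi)\cong H_2(M;\mathbb{Z}\pi)$, and the Serre filtration compares $H^2(\pi;\mathbb{Z}\pi)$-type classes with $H_2(\pi;\pi_2^w)$. Since $c_*[M]=0$, $f_*[M]$ lies in $F_{2,2}$. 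We would then pair its image in $H_2(\pi;A^w)$ against the dual group. Because this group is finite cyclic, it suffices to show that the pairing of $f_*[M]$ with a suitable class, namely the one obtained from the dual cohomology group $H^2(\pi;A^{*})$-type extension class coming from $K\subseteq M$ via the $2$-skeleton, is a unit. That unit is forced by Poincaré duality in $M$, since capping with $[M]$ is an isomorphism and so takes a generator to a generator. If this route proves awkward, the alternative is to realize $H_2(\pi;A^w)$ as the cokernel via the exact sequence $H_2(M;\mathbb{Z}^w)$, and to use that capping $[M]$ with $H^2$ is surjective onto $H_2(M;\mathbb{Z}^w)$, which maps onto $H_2(K;A^w)$-quotients.
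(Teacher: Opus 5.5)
Your first two steps are correct and match the paper's argument for exactness. The vanishing of $H_1(\wt B;\mathbb{Z})$ and $H_3(\wt B;\mathbb{Z})$ kills all $d_2$'s touching $E_{2,2}$ and $E_{4,0}$. Then \cref{dthreeiso} shows that $d_3$ kills $E_{4,0}$ and exactly the $H_2(\pi;(A^{*})^w)$ summand of $E_{2,2}$. Hence $\coker\varphi_B\cong E^\infty_{2,2}\cong H_2(\pi;A^w)$.

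The third step, that $f_*[M]$ maps to a generator, has a genuine gap. You never specify the coefficient module or the class that $f_*[M]$ is paired against. You also do not explain why such a pairing computes the projection $H_4(B;\mathbb{Z}^w)\to E^\infty_{2,2}$, or why Poincar\'e duality makes its value a unit. The fallback via $H_2(M;\mathbb{Z}^w)$ likewise never connects $[M]\in H_4$ to the $(2,2)$ term, and deferring to \cite[Lemma~8.11]{KPR} is not an argument.

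The missing idea is to run the same Serre spectral sequence for $\wt M\to M\to B\pi$ and use naturality along the $3$-connected map $f$. There $E^2_{2,2}(M)=H_2(\pi;\pi_2(M)^w)\cong E^2_{2,2}(B)$. However, $H_3(\wt M;\mathbb{Z})\cong H^1(\pi;\mathbb{Z}\pi)$ (by Poincar\'e duality) need not vanish, so there is a potentially nonzero differential $d_2\colon E^2_{2,2}(M)\to E^2_{0,3}(M)=H_0(\pi;H_3(\wt M;\mathbb{Z})^w)$. The paper notes that this target is torsion-free. Hence $d_2$ vanishes on the finite summand $H_2(\pi;A^w)$, which therefore survives. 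The $d_3$ analysis then transfers by naturality and gives an isomorphism $E^\infty_{2,2}(M)\xrightarrow{\cong}E^\infty_{2,2}(B)\cong H_2(\pi;A^w)$.

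Next, $E^\infty_{3,1}(M)=0$, and $E^\infty_{4,0}(M)=0$ because $d_3$ for $M$ is injective by naturality. So $H_4(M;\mathbb{Z}^w)=F_2H_4(M;\mathbb{Z}^w)$, and $H_4(M;\mathbb{Z}^w)\cong\mathbb{Z}$, generated by $[M]$, surjects onto $E^\infty_{2,2}(M)$. Therefore $f_*[M]$ maps to a generator.

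Note where the hypothesis actually enters: finiteness is what kills that $d_2$, not any duality pairing. Cyclicity then comes for free from the surjection from $\mathbb{Z}$.
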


\begin{proof}
  Consider the Serre spectral sequence of the fibration $\wt B
  \to B \to B\pi$ that has $E^2$ page $H_p(\pi; H_q(\wt B; \mathbb{Z})^w)$ and
  converges to $H_{p+q}(B; \mathbb{Z}^w)$. Since $B$ is $3$-coconnected, by the
  Hurewicz Theorem $H_3(\wt B; \mathbb{Z})$ is trivial. Hence the
  differential $d_2 \colon H_2(\pi; (A^{*}\oplus A)^w) \to H_0(\pi;
  H_3(\wt B; \mathbb{Z})^w) \cong 0$ is trivial and the term $H_2(\pi; (A^{*}\oplus
  A)^w)$ survives to the $E^3$ page. By \cref{dthreeiso} the
  differentials $d_3 \colon H_p(\pi; \mathbb{Z}^w) \to H_{p-3}(\pi; (A^{*}\oplus A)^w)$
  are given by inclusion of the first summand. This means that on the
  $4$-diagonal of the $E^{\infty}$ page the only nontrivial terms,
  comprising the associated graded group for $H_4(B; \mathbb{Z}^w)$, are a
  quotient of $\mathbb{Z}^w \otimes_{\mathbb{Z}\pi} H_4(\wt B; \mathbb{Z})$ and $H_2(\pi; A^w)$.
  
  Compare now to the analogous spectral sequence of $\wt M \to M
  \to B\pi$. Note that $H_3(\wt M; \mathbb{Z}) \cong H_3(M; \mathbb{Z}\pi) \cong H^1(M; \mathbb{Z}\pi^w) \cong
  H^1(\pi; \mathbb{Z}\pi^w)$. We have that the differential $d_2 \colon H_2(\pi;
  (A^{*}\oplus A)^w) \to H_0(\pi; H_3(\wt M; \mathbb{Z})^w) \cong \mathbb{Z}^w\otimes_{\mathbb{Z}\pi}H^1(\pi;
  \mathbb{Z}\pi)^w$ has torsion-free codomain. But by assumption the $H_2(\pi; A^w)$
  summand of the domain is torsion, hence it survives to the
  $E^3$-page. On that page the analysis is the same as above by
  naturality, as the map $M \to B$ is $3$-conneceted, and we obtain the
  commutative diagram with exact rows
  \begin{equation*}
    \begin{tikzcd}
      & H_4(M; \mathbb{Z}^w) \arrow[r] \arrow[d, "f_{*}"]
      & H_2(\pi; A^w) \arrow[r] \arrow[d, "\cong"]
      & 0 \\
      \mathbb{Z}^w \otimes_{\mathbb{Z}\pi} H_4(\wt B; \mathbb{Z}) \arrow[r, "\varphi_B"]
      & H_4(B; \mathbb{Z}^w) \arrow[r]
      & H_2(\pi; A^w) \arrow[r]
      & 0,
    \end{tikzcd}
  \end{equation*}
  which shows that $f_{*}[M]$ maps to a generator of $H_2(\pi; A^w)$
  as required.
\end{proof}

\begin{lemma}[cf.\ {\cite[Lemma~8.3]{KPR}}]
  \label{stab-inv-right-cont-top}
  Let $B'$ be a $3$-coconnected CW-complex with fundamental group
  $\pi$. Write $B^s$ for the Postnikov $2$-type of the space $B' \vee
  S^2$. Then $\ker\Theta_{B'} \subseteq \im\varphi_{B'}$ if and only if $\ker\Theta_{B^s} \subseteq
  \im\varphi_{B^s}$.
\end{lemma}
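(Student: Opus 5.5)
We compare the two diagrams of \cref{comm-diag} for $B'$ and $B^s$ through the inclusion $i\colon B' \to B^s$ and the collapse $r\colon B^s \to B'$. These come from $B' \to B'\vee S^2 \to B'$ on Postnikov $2$-types and satisfy $r\circ i \simeq \id$. Here $\pi_2(B^s) \cong \pi_2(B')\oplus\mathbb{Z}\pi$, and the $k$-invariant of $B^s$ is that of $B'$ with trivial component in $H^3(\pi;\mathbb{Z}\pi)$. All four maps $\varphi$, $\Theta$, $\mathcal{B}$, $\ev^{*}$ are natural in the space, so $i$ and $r$ give maps of the commutative squares. Consequently $H_4(B';\mathbb{Z}^w)$ is a direct summand of $H_4(B^s;\mathbb{Z}^w)$, and $\mathbb{Z}^w\otimes_{\mathbb{Z}\pi}\Gamma(\pi_2(B'))$ is a direct summand of $\mathbb{Z}^w\otimes_{\mathbb{Z}\pi}\Gamma(\pi_2(B^s))$, compatibly with $\varphi$ (cf.\ \cref{dsum-rem}).

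\emph{Direction ``$\Leftarrow$''.} Take $x \in \ker\Theta_{B'}$. Naturality of $\Theta$ with respect to $i$ means $\Theta_{B^s}(i_*x)$ is the form $(\alpha,\beta)\mapsto \langle i^*\beta, i^*\alpha\cap x\rangle$, which vanishes, so $i_*x \in \ker\Theta_{B^s}$. By hypothesis $i_*x = \varphi_{B^s}(y)$. Applying $r_*$ gives $x = r_*i_*x = \varphi_{B'}(r_*y)$, since $\varphi$ is natural. This direction is purely formal.

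\emph{Direction ``$\Rightarrow$''.} Decompose $H_4(B^s;\mathbb{Z}^w)$ as $i_*H_4(B';\mathbb{Z}^w)\oplus \ker r_*$. The main step is to show that $\ker r_*$ lies in $\im\varphi_{B^s}$, i.e.\ that the cokernels of $\varphi_{B'}$ and $\varphi_{B^s}$ agree. For this I would use the Serre spectral sequence of $\wt B \to B \to B\pi$, as in the proofs of \cref{ker_phi1} and \cref{nontriv-im}. Since rows $1$ and $3$ vanish, $H_4(B;\mathbb{Z}^w)$ has a filtration whose graded pieces are a quotient of $E^2_{0,4}$ (this is $\im\varphi_B$), a subquotient of $E^2_{2,2} = H_2(\pi;\pi_2(B)^w)$, and $E^\infty_{4,0}$. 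Passing from $B'$ to $B^s$ adds $H_2(\pi;\mathbb{Z}\pi^w) = 0$ to $E^2_{2,2}$, and leaves $H_p(\pi;\mathbb{Z}^w)$ unchanged. The $d_3$ differentials out of $H_p(\pi;\mathbb{Z}^w)$ land in $H_{p-3}(\pi;\pi_2)$, and their new components land in $H_{p-3}(\pi;\mathbb{Z}\pi^w)$, which vanishes for $p-3>0$. For $p=3$ the new target lies in $E_{0,2}$, and the $k$-invariant component there is trivial, so these components vanish as well. Naturality under $i$ and $r$ then shows that the graded pieces above the bottom filtration stage are isomorphic for $B'$ and $B^s$, and the five lemma gives $\coker\varphi_{B'}\cong\coker\varphi_{B^s}$ via $i_*$ and $r_*$. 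I expect the bookkeeping of $d_3\colon E_{3,0}\to E_{0,2}$ and of $d_2\colon E_{2,2} \to E_{0,3}=0$ to be the main obstacle. The fallback is to check directly that $r_*$ maps the bottom filtration stage onto that of $B'$.

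Given this, let $x\in\ker\Theta_{B^s}$ and write $x = i_*r_*x + (x - i_*r_*x)$. The second summand lies in $\ker r_*$, hence in $\im\varphi_{B^s}$. Naturality with respect to $r$ gives $\Theta_{B'}(r_*x)$ as the restriction of $\Theta_{B^s}(x)$ along $r^*$, so $r_*x\in\ker\Theta_{B'}$. By hypothesis $r_*x = \varphi_{B'}(z)$, and then $i_*r_*x = \varphi_{B^s}(i_*z)$. Hence $x\in\im\varphi_{B^s}$.
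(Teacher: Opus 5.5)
Your proof is correct, but it takes a somewhat different and more general route than the paper.

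\textbf{The paper's route.} The paper proves one direction and calls the other analogous. It relies on the spectral sequence analysis of \cref{nontriv-im}, which rests on \cref{dthreeiso}, to get exact rows $\mathbb{Z}^w\otimes_{\mathbb{Z}\pi}H_4(-;\mathbb{Z}\pi)\to H_4(-;\mathbb{Z}^w)\to H_2(\pi;A^w)\to 0$ for both $B'$ and $B^s$. In these rows the inclusion and collapse induce isomorphisms on $H_2(\pi;A^w)$. The paper then chases $x\in\ker\Theta_{B^s}$ through $r_*$ exactly as you do.

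\textbf{Your route.} Your ``$\Leftarrow$'' direction avoids the spectral sequence altogether: it uses only $r\circ i\simeq\id$ and naturality of $\Theta$ and $\varphi$. Your ``$\Rightarrow$'' direction obtains $\coker\varphi_{B^s}\cong\coker\varphi_{B'}$ without computing either cokernel. Only $E^\infty_{2,2}$ and $E^\infty_{4,0}$ sit above the bottom filtration stage, and passing to $B^s$ changes them only through $H_2(\pi;\mathbb{Z}\pi^w)$ and $H_1(\pi;\mathbb{Z}\pi^w)$, which vanish. Naturality of the spectral sequence and the five lemma then finish the argument.

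\textbf{What each buys.} Your argument proves the lemma for an arbitrary $3$-coconnected $B'$, as the statement literally says. The paper's identification of the cokernel with $H_2(\pi;A^w)$ tacitly requires $B'$ to satisfy the standing hypotheses of \cref{dthreeiso}, namely $\pi_2$ stably isomorphic to $A\oplus A^{*}$ and a compatible $k$-invariant. In return, the paper's version produces the explicit cokernel, which it needs anyway in \cref{right-cont-top-general}.

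\textbf{A minor simplification.} You flag the differential $d_3\colon E_{3,0}\to E_{0,2}$ as the main obstacle, but it plays no role, and neither does your remark about the $k$-invariant component. Both concern total degrees $2$ and $3$. In total degree $4$ only the $d_3$ differentials out of $E_{4,0}$ and $E_{5,0}$ matter, and you have already handled those.
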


\begin{proof}
  The inclusion and collapse maps induce maps $B' \to B^s \to B'$. These
  induce the following commutative diagram with exact rows, using the
  Serre spectral sequence as in the proof of \cref{nontriv-im}
  (note that we do not need the hypothesis of that lemma here):
  \begin{equation*}
    \begin{tikzcd}[row sep=small]
      \mathbb{Z}^w \otimes_{\mathbb{Z}\pi} H_4(B'; \mathbb{Z}\pi) \arrow[r, "\varphi_{B'}"] \arrow[d]
      & H_4(B'; \mathbb{Z}^w) \arrow[d] \arrow[r]
      & H_2(\pi; A^w) \arrow[r] \arrow[d, "\cong"]
      & 0 \\
      \mathbb{Z}^w \otimes_{\mathbb{Z}\pi} H_4(B^s; \mathbb{Z}\pi) \arrow[r, "\varphi_{B^s}"] \arrow[d]
      & H_4(B^s; \mathbb{Z}^w) \arrow[d] \arrow[r]
      & H_2(\pi; A^w) \arrow[r] \arrow[d, "\cong"]
      & 0 \\
      \mathbb{Z}^w \otimes_{\mathbb{Z}\pi} H_4(B'; \mathbb{Z}\pi) \arrow[r, "\varphi_{B'}"]
      & H_4(B'; \mathbb{Z}^w) \arrow[r]
      & H_2(\pi; A^w) \arrow[r]
      & 0.
    \end{tikzcd}
  \end{equation*}
  
  Suppose that $\ker\Theta_{B'}\subseteq\im\varphi_{B'}$, and take $x\in\ker\Theta_{B^s}$. The
  image of $x$ in $H_4(B'; \mathbb{Z}^w)$ is in the kernel of $\Theta_{B'}$, hence
  in the image of $\varphi_{B'}$ by assumption. Therefore $x$ maps trivially
  to $H_2(\pi; A^w)$, and so is in the image of $\varphi_{B^s}$ as
  required. The other direction is analogous.
\end{proof}

\begin{theorem}[cf.\ {\cite[Lemma~8.13]{KPR}}]
  \label{right-cont-top-general}
  Suppose that $A$ is reflexive and that $H_2(\pi; A^w)$ is trivial or
  cyclic of prime order. Then the kernel of the map $\Theta_B$ is contained
  in the image of the map $\varphi_B$.
\end{theorem}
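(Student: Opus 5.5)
By \cref{nontriv-im} (and the Serre spectral sequence argument in its proof, which does not need finiteness to produce the exact sequence), we have an exact sequence
\[
  \mathbb{Z}^w\otimes_{\mathbb{Z}\pi}H_4(B;\mathbb{Z}\pi)\xrightarrow{\varphi_B}H_4(B;\mathbb{Z}^w)\to H_2(\pi;A^w)\to 0 .
\]
It therefore suffices to show that every $x\in\ker\Theta_B$ maps to $0$ in $H_2(\pi;A^w)$. If $H_2(\pi;A^w)=0$ there is nothing to prove, so assume it is cyclic of prime order $r$.

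\emph{Reduction to a model.} By \cref{stab-inv-right-cont-top}, applied repeatedly, both the hypothesis and the conclusion are unchanged when $B$ is replaced by the Postnikov $2$-type of $B\vee^j S^2$. Since $\pi_2(B)$ is stably isomorphic to $A\oplus A^*$ and the $k$-invariants match, we may replace $B$ by $B^s$, and then, using the same lemma in the other direction, by $B^{\oplus}$, the Postnikov $2$-type of the double $D(K)$. So we may assume $\pi_2(B)=A\oplus A^*$ with $k$-invariant supported on $A^*$.

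\emph{Strategy.} Suppose $x\in\ker\Theta_B$ maps to a nonzero element $\bar x\in H_2(\pi;A^w)$. Because $H_2(\pi;A^w)$ has prime order, $\bar x$ is a generator. By \cref{nontriv-im}, $f_*[M]$ also maps to a generator, so for some integer $n$ coprime to $r$ we have $n f_*[M]-x\in\im\varphi_B$. Write $n f_*[M]=x+\varphi_B(y)$. Applying $\Theta_B$ and using \cref{comm-diag} gives
\[
  n\lambda_M=\ev^*\mathcal{B}(y)
\]
as forms on $H^2(B;\mathbb{Z}\pi)$. The aim is to derive a contradiction from this identity: a multiple of the unimodular form $\lambda_M$ (pulled back to $H^2(B;\mathbb{Z}\pi)\cong H^2(M;\mathbb{Z}\pi)$) would then be of ``$\Gamma$-type'', i.e.\ come from $\mathbb{Z}^w\otimes_{\mathbb{Z}\pi}\Gamma(\pi_2)$ through $\ev^*$. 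Reflexivity of $A$ enters here. By \cref{assum-rem} it gives $H^2(\pi;\mathbb{Z}\pi)=H^3(\pi;\mathbb{Z}\pi)=0$, so $\ev\colon H^2(B;\mathbb{Z}\pi)\to\pi_2(B)^*$ is an isomorphism and $\ev^*$ is an isomorphism. Hence the identity reads $n\lambda_M=\mathcal{B}(y)$ as forms on $\pi_2^*\cong A^*\oplus A^{**}\cong A^*\oplus A$.

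\emph{Main obstacle.} The hard step is showing that $n\lambda_M$ cannot lie in the image of $\mathcal{B}$ unless $r\mid n$. I would follow \cite[Lemma~8.13]{KPR}: restrict both forms to the off-diagonal block $\Hom_{\mathbb{Z}\pi}(A^{**},A^{**})\cong\Hom_{\mathbb{Z}\pi}(A,A)$ of \cref{dsum-rem}. For the model $D(K)$, the intersection form pairs $A$ with $A^*$ via the identity (evaluation), so this block of $\lambda_M$ is $\id_A$. The image of $A^*\otimes_{\mathbb{Z}\pi}A$ in $\Hom_{\mathbb{Z}\pi}(A,A)$ consists of the finite-rank-type (``trace class'') endomorphisms. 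I then want a homomorphism
\[
  \Hom_{\mathbb{Z}\pi}(A,A)\to H_2(\pi;A^w)\cong\mathbb{Z}/r
\]
that sends $\id_A$ to a generator and kills the image of $A^*\otimes_{\mathbb{Z}\pi}A$. Such a map should arise from the identification $H_2(\pi;A^w)\cong\coker\bigl(A^*\otimes_{\mathbb{Z}\pi}A\to\Hom(A,A)\bigr)$-type sequences obtained by comparing $\theta$ from the proof of \cref{ker_phi2} with $H_2(K;A^w)$, a Tate-cohomology-style computation. Given this map, $n\cdot(\text{generator})=0$, so $r\mid n$, which is the required contradiction. If $\lambda_M$ differs from the doubled model by stabilization, the extra $\mathbb{Z}\pi^k$ summands contribute only free parts, which lie in the image of $\mathcal{B}$ and are killed by the same map.
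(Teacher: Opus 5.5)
\textbf{There is a genuine gap: the decisive step is missing.} Your setup matches the paper's proof. You reduce via \cref{stab-inv-right-cont-top} to the Postnikov $2$-type of the double, use the exact sequence of \cref{nontriv-im}, use reflexivity to make $\ev^{*}$ an isomorphism, and pass to the off-diagonal block, where the form of the double is $\id$. Everything then comes down to one claim: $n\,\id_A$ lies in the image of $A\otimes_{\mathbb{Z}\pi}A^{*}\to\Hom_{\mathbb{Z}\pi}(A,A)$ only if $r\mid n$. That is exactly the step you leave open. You need a homomorphism $\Psi\colon\Hom_{\mathbb{Z}\pi}(A,A)\to H_2(\pi;A^w)$ that kills this image and sends $\id_A$ to a generator. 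You only say it ``should arise from'' a cokernel identification that you neither prove nor need. The comparison with $\theta$ from \cref{ker_phi2} cannot supply it, since $\theta$ is a map out of $A^{*}\otimes_{\mathbb{Z}\pi}A$, not out of $\Hom_{\mathbb{Z}\pi}(A,A)$. You also give no reason why $\Psi(\id_A)\neq0$, and without that the relation $n\Psi(\id_A)=0$ yields nothing.

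\textbf{How the paper supplies the missing idea.} Since $A$ is reflexive, choose an embedding $j\colon A\hookrightarrow F$ with $F$ free. Let $\Psi$ be the composite
$\Hom_{\mathbb{Z}\pi}(A,A)\xrightarrow{j\circ-}\Hom_{\mathbb{Z}\pi}(A,F)\cong F\otimes_{\mathbb{Z}\pi}A^{*}\to (F/A)\otimes_{\mathbb{Z}\pi}A^{*}$.
\begin{enumerate}
\item By right exactness, $\Psi$ kills the image of $A\otimes_{\mathbb{Z}\pi}A^{*}$.
\item Use the sequence $0\to A^{*}\to C_2\to\ker\partial_1\to0$. The image of $\Psi$ lies in $\ker\bigl((F/A)\otimes_{\mathbb{Z}\pi}A^{*}\to(F/A)\otimes_{\mathbb{Z}\pi}C_2\bigr)=\Tor_1^{\mathbb{Z}\pi}(F/A,\ker\partial_1)$. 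By dimension shifting this is $H_2(\pi;A^w)$. The reason is that the image in $(F/A)\otimes_{\mathbb{Z}\pi}C_2\cong\Hom_{\mathbb{Z}\pi}(C^2,F/A)$ is obtained by post-composing with the quotient map $F\to F/A$, which vanishes on $j(A)$.
\item $\Psi(\id_A)=0$ if and only if $j$ lifts to $A\otimes_{\mathbb{Z}\pi}A^{*}$. Since $j$ is injective, this holds if and only if $\id_A$ is in the image of $A\otimes_{\mathbb{Z}\pi}A^{*}\to\Hom_{\mathbb{Z}\pi}(A,A)$. By Bourbaki's criterion, this happens if and only if $A$ is projective, and then $H_2(\pi;A^w)=0$.
\end{enumerate}
With these facts in place, your contradiction argument (or the paper's ``$k$ equals the order'' version) goes through.
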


\begin{proof}
  If $N$ is another Poincar\'e $4$-complex with $c_{*}[N] = 0$ and
  $B'$ is its Postnikov $2$-type, then we have that $P_2(B\vee^jS^2) \simeq
  P_2(B'\vee^kS^2)$ for some $j,k\in\mathbb{Z}$. But now by
  \cref{stab-inv-right-cont-top} it suffices to prove the statement of
  the theorem for a single concrete $N$.

  Hence suppose that $B$ is the Postnikov $2$-type of the double $N$
  of $K$. By \cite[Lemma~7.12]{KPT} we have that the element $\lambda
  \coloneqq f_{*}\lambda_N \in \Her^w(A\oplus A^{*})$ maps to $(0, *, \id)$ under
  the decomposition $\Her^w(A\oplus A^{*}) \cong \Her^w(A) \oplus \Her^w(A^{*}) \oplus
  \Hom_{\mathbb{Z}\pi}(A^{**}, A^{**})$.

  Now let $y \in \ker \Theta_B$. By \cref{comm-diag,nontriv-im} we get a
  commutative diagram
  \begin{equation*}
    \begin{tikzcd}
      \mathbb{Z}^w \otimes_{\mathbb{Z}\pi} H_4(B; \mathbb{Z}\pi) \arrow[r, "\varphi_B"] \arrow[d, "\mathcal{B}_{H_2(B;, \mathbb{Z}\pi)}"]
      & H_4(B; \mathbb{Z}^w) \arrow[d, "\Theta_B"] \arrow[r]
      & H_2(\pi; A^w) \arrow[r]
      & 0 \\
      \Her^w(H_2(B; \mathbb{Z}\pi)^{*}) \arrow[r, "\ev^{*}"]
      & \Her^w(H^2(B; \mathbb{Z}\pi)).
    \end{tikzcd}
  \end{equation*}
  The top row is exact and so, if $H_2(\pi; A^w)$ is trivial, the
  theorem follows. Otherwise, for some positive integer $k \leq |H_2(\pi;
  A^w)|$, we have that $kf_{*}[N] - y$ is in the image of $\varphi_B$. Under
  $\Theta_B$, this element maps to $k\lambda$. Since $A$ is reflexive, we know
  that $H^2(\pi; \mathbb{Z}\pi) = H^3(\pi; \mathbb{Z}\pi)=0$ (see \cref{assum-rem}), and so the
  map $\ev^{*}$ is an isomorphism by \cite[Lemma~5.1]{KPR}. Therefore,
  it suffices to show that $k\lambda \in \Her^w(H_2(B; \mathbb{Z}\pi)^{*})$ is contained
  in the image of $\mathcal{B}_{H_2(B; \mathbb{Z}\pi)}$ only if $k$ is equal to the order
  of $H_2(\pi; A^w)$. Then $y$ will map to $0$ in $H_2(\pi; A^w)$, and
  hence be in the image of $\varphi_B$ as required.

  Suppose $k\lambda$ is in the image of $\mathcal{B}_{H_2(B; \mathbb{Z}\pi)}$, i.e.\
  $k\id_{A^{**}}$ is in the image of the map $A\otimes_{\mathbb{Z}\pi} A^{*} \to
  \Hom_{\mathbb{Z}\pi}(A^{**}, A^{**}) \cong \Hom_{\mathbb{Z}\pi}(A, A)$, where the isomorphism
  uses that $A$ is reflexive. The composition is given by the natural
  map sending $a\otimes a' \mapsto (x \mapsto a'(x)a)$. According to Bourbaki
  \cite[Chapter~II, Section~4.2, Remark~1]{bourbaki}, $\id_A$ is in
  the image of this map if and only if $A$ is a projective module. In
  that case $H_2(\pi; A^w)$ vanishes and we are done, therefore we can
  assume $\id_A$ is not in the image.

  Since $A$ is reflexive, we have a short exact sequence $A
  \xrightarrow{j} F \xrightarrow{q} F/A$ for some free module
  $F$. This induces the commutative diagram
  \begin{equation*}
    \begin{tikzcd}
      A\otimes_{\mathbb{Z}\pi}A^{*} \arrow[r] \arrow[d]
      & F \otimes_{\mathbb{Z}\pi} A^{*} \arrow[d, "\cong"] \\
      \Hom_{\mathbb{Z}\pi}(A, A) \arrow[r, "j\circ-"]
      & \Hom_{\mathbb{Z}\pi}(A, F),
    \end{tikzcd}
  \end{equation*}
  where the right vertical map is an isomorphism and the image of the
  left vertical map contains $k\id_A$ but not $\id_A$. Hence $kj$ is
  in the image of the top map but $j$ is not, and so $j$ maps to a
  nontrivial $k$-torsion element $z$ in $F/A\otimes_{\mathbb{Z}\pi} A^{*}$.

  But now recall that $A^{*}$ is isomorphic to $\ker \partial_2$ in the chain
  complex $(C_{*}, \partial_{*})$, so the short exact sequence $A^{*}
  \xrightarrow{\iota} C_2 \to \ker \partial_1$ induces the exact sequence
    \[
    0 \to \Tor_1^{\mathbb{Z}\pi}(F/A,\ker \partial_1) \to F/A\otimes_{\mathbb{Z}\pi} A^{*} \to F/A\otimes_{\mathbb{Z}\pi} C_2. 
  \]
  The image of $z$ in $F/A\otimes_{\mathbb{Z}\pi} C_2 \cong \Hom_{\mathbb{Z}\pi}(C^2, F/A)$ is equal
  to the image of $j$ under the map $\Hom_{\mathbb{Z}\pi}(A, F) \cong
  \Hom_{\mathbb{Z}\pi}(A^{**}, F) \xrightarrow{q\circ-\circ\iota^{*}} \Hom_{\mathbb{Z}\pi}(C^2, F/A)$,
  but this is trivial since $q\circ j = 0$. Hence $z$ is in the kernel of
  the right map in the sequence, which by dimension shifting is
  isomorphic to $\Tor_1^{\mathbb{Z}\pi}(F/A,\ker \partial_1) \cong \Tor_2^{\mathbb{Z}\pi}(F/A, \ker
  \partial_0) \cong \Tor_3(F/A, \mathbb{Z}) \cong H_2(\pi; A^w)$. Therefore $k$ is equal to the
  order of $H_2(\pi; A^w)$, and we are done.
\end{proof}

The following is then immediate.

\begin{corollary}
  \label{flourish}
  Let $M$ and $M'$ be Poincar\'e $4$-complexes with fundamental group
  $\pi$, isomorphic quadratic $2$-types and $\pi_2(M) \cong \pi_2(M')$ stably
  isomorphic to $A\oplus A^{*}$. Suppose that $A$ is reflexive and that
  $H_2(\pi; A^w)$ is trivial or cyclic of prime order. Let $f \colon M \to
  B$ and $f' \colon M' \to B$ be $3$-connected maps to the Postnikov
  $2$-type $B$. Then the element $f_{*}[M] - f_{*}'[M'] \in H_4(B; \mathbb{Z})$
  is in the image of $\varphi_B \colon \mathbb{Z} \otimes_{\mathbb{Z}\pi} H_4(B; \mathbb{Z}\pi) \to H_4(B; \mathbb{Z}\pi)$. \qed
\end{corollary}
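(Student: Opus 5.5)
The corollary follows by feeding the two fundamental classes into \cref{comm-diag} and then applying \cref{right-cont-top-general}. Throughout we keep the standing assumption of \cref{sec-ker,sec-diff} that the classifying maps send the fundamental classes to $0$ in $H_4(\pi;\mathbb{Z}^w)$.

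\textbf{Step 1: reduce to a common target $B$.} Use the isomorphism of quadratic $2$-types to identify the Postnikov $2$-types of $M$ and $M'$ with a single $B$. This gives $3$-connected maps $f$ and $f'$ which induce the given isomorphisms on $\pi_1$ and $\pi_2$, respect the $k$-invariant, and carry $\lambda_M$ and $\lambda_{M'}$ to the same form on $H^2(B;\mathbb{Z}\pi)$. We may have to compose $f'$ with a self-equivalence of $B$ realizing the chosen isomorphism of $2$-types; this is possible because a $3$-coconnected space is determined by its $\pi_1$, $\pi_2$ and $k$-invariant. Since $M$ and $M'$ have the same orientation character, both classes live in $H_4(B;\mathbb{Z}^w)$. (The statement writes untwisted $\mathbb{Z}$; read it with $w$-twisted coefficients throughout.)

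\textbf{Step 2: the difference lies in $\ker\Theta_B$.} By the definition of $\Theta_B$ in \cref{comm-diag}, together with naturality of the cap product and the fact that $f$ is $3$-connected (so $f^*$ is an isomorphism on $H^2(-;\mathbb{Z}\pi)$), the form $\Theta_B(f_*[M])$ is $\lambda_M$ transported to $H^2(B;\mathbb{Z}\pi)$. The same holds for $M'$. As the intersection forms correspond under the isomorphism of quadratic $2$-types, we get
\[
\Theta_B(f_*[M]-f'_*[M'])=0 .
\]

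\textbf{Step 3: apply \cref{right-cont-top-general}.} Its hypotheses are that $A$ is reflexive, that $H_2(\pi;A^w)$ is trivial or cyclic of prime order, and that $B$ is the Postnikov $2$-type of a Poincar\'e complex with vanishing image of the fundamental class in $H_4(\pi;\mathbb{Z}^w)$ and $\pi_2$ stably isomorphic to $A\oplus A^*$. All of these are given. The theorem yields $\ker\Theta_B\subseteq\im\varphi_B$, so the difference from Step 2 lies in $\im\varphi_B$, which is the claim.

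\textbf{Main obstacle.} The only delicate point is Step 1: choosing $f'$ so that the induced identifications on $\pi_1$, $\pi_2$ and $w$ are exactly those from the isomorphism of quadratic $2$-types, so that the two pushed-forward intersection forms agree in $\Her^w(H^2(B;\mathbb{Z}\pi))$ rather than only up to an automorphism. Once $f'$ is chosen this way, the rest is formal.
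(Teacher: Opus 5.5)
Your proposal is correct and matches the argument the paper intends when it calls the corollary immediate: agreement of the pushed-forward intersection forms puts $f_*[M]-f'_*[M']$ in $\ker\Theta_B$ (exactly as in the proof of Theorem~\ref{realization-intr}), and Theorem~\ref{right-cont-top-general} gives $\ker\Theta_B\subseteq\im\varphi_B$. You are also reading the statement as intended on two points: the coefficients are $\mathbb{Z}^w$, so the target of $\varphi_B$ is $H_4(B;\mathbb{Z}^w)$, and $f,f'$ are taken compatible with the given isomorphism of quadratic $2$-types.
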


\begin{remark}
  The assumption that $H_2(\pi; A^w)$ is trivial or cyclic of prime
  order was chosen because it is satisfied by the groups $\mathbb{Z}/p\rtimes\mathbb{Z}$, as
  we show in \cref{histar-cyclic}. All of the results in this section
  also hold under the slightly different assumption that $H^1(\pi;
  \mathbb{Z}\pi)=0$, i.e.\ they also hold for one-ended groups. The proofs
  require minimal modification. Indeed in this case $\coker\partial^1 \cong
  \im\partial^2$ and so we have the short exact sequence $\coker\partial^1 \to C^2 \to
  \coker\partial^2 = A$, whence $H_2(\pi; A^w) \cong H_1(\pi; (\coker\partial^1)^w)$. Then
  since $C^0 = \mathbb{Z}\pi$, by the exact sequence
  \[
    0\to H_1(\pi;(\coker\partial^1)^w)\to\mathbb{Z}\cong H_0(\pi;C^0) \to H_0(\pi;C^1)
  \]
  we can see that $H_2(\pi; A^w)$ is infinite cyclic. The only place
  where finiteness is necessary in the proofs is in \cref{nontriv-im},
  where a certain differential with codomain $H_0(\pi; H^1(\pi; \mathbb{Z}\pi))$ was
  shown to be trivial, which clearly also holds if $H^1(\pi; \mathbb{Z}\pi)=0$. The
  rest of the proofs work equally well for infinite cyclic $H_2(\pi;
  A^w)$.
\end{remark}

\section{Properties of $\mathbb{Z}/p \rtimes \mathbb{Z}$}
\label{sec-prop}

Now we specialize to the fundamental groups $\mathbb{Z}/p \rtimes \mathbb{Z}$. From now on,
let $p$ be an odd prime, let $\pi = \mathbb{Z}/p \rtimes \mathbb{Z}$ where $\mathbb{Z}$ acts on $\mathbb{Z}/p$ by
multiplication by $q$, let $w \colon \pi \to \{\pm1\}$ be a homomorphism,
and set:
\begin{itemize}
\item $t, T$ generators of $\pi$ such that $\pi = \langle\,t, T \mid tTt^{-1}T^{-q},
  T^p\,\rangle$;
\item $N = 1+T+T^2+\cdots+T^{p-1} \in \mathbb{Z}\pi$;
\item $\wh N = 1+T+T^2+\cdots+T^{q-1} \in \mathbb{Z}\pi$;
\item $A = \langle q-\ol t, N\rangle$, the left ideal in $\mathbb{Z}\pi$;
\item $A' = \langle q-t, 1-T\rangle$, the left ideal in $\mathbb{Z}\pi$.
\end{itemize}

In this section we will use \cref{stab-iso} to determine the possible
stable isomorphism classes of $\pi_2(M)$, and we will verify that $\pi$
has the desirable properties of \cref{sec-ker,sec-diff}.

\begin{lemma}
  \label{hfour}
  We have that
  \[
    H_4(\pi; \mathbb{Z}^w) \cong
    \begin{cases}
      \mathbb{Z}/p &\quad q^2 \equiv w(t) \mod p,\\
      0 &\quad \text{otherwise.}
    \end{cases}
  \]
\end{lemma}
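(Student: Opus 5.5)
We plan to compute $H_4(\pi;\mathbb{Z}^w)$ with the Lyndon--Hochschild--Serre spectral sequence of the extension $1 \to \mathbb{Z}/p \to \pi \to \mathbb{Z} \to 1$, where $T$ generates the normal subgroup and $t$ maps to a generator of the quotient $\mathbb{Z}$. Since the quotient is $\mathbb{Z}$, which has cohomological dimension $1$, the $E^2$ page $H_i(\mathbb{Z}; H_j(\mathbb{Z}/p;\mathbb{Z}^w))$ is concentrated in the columns $i=0,1$. The sequence therefore degenerates at $E^2$ and we get a Wang-type short exact sequence
\[
  0 \to H_0(\mathbb{Z}; H_4(\mathbb{Z}/p;\mathbb{Z}^w)) \to H_4(\pi;\mathbb{Z}^w) \to H_1(\mathbb{Z}; H_3(\mathbb{Z}/p;\mathbb{Z}^w)) \to 0.
\]
Note that $w$ is trivial on $T$, because $T$ has odd order. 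So, as a group, $H_j(\mathbb{Z}/p;\mathbb{Z}^w)=H_j(\mathbb{Z}/p;\mathbb{Z})$. This is $\mathbb{Z}/p$ for odd $j$ and $0$ for even $j>0$. In particular $H_4(\mathbb{Z}/p;\mathbb{Z})=0$, and $H_4(\pi;\mathbb{Z}^w)\cong H_1(\mathbb{Z}; H_3(\mathbb{Z}/p;\mathbb{Z})^w)$. This last group is the group of invariants of the action of $t$ on $H_3(\mathbb{Z}/p;\mathbb{Z})\cong\mathbb{Z}/p$, twisted by $w(t)$; for $\mathbb{Z}$-homology with coefficients in $M$ we have $H_1(\mathbb{Z};M)=\ker(t-1)$.

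The main step is to identify the action of $t$ on $H_3(\mathbb{Z}/p;\mathbb{Z})$. Conjugation by $t$ restricts to the automorphism $T\mapsto T^{q}$ of $\mathbb{Z}/p$, up to inverting $q$ depending on the convention in the relation $tTt^{-1}=T^q$. This ambiguity does not matter for the final condition, since $q^2\equiv \pm1$ is equivalent to $q^{-2}\equiv\pm1$. The standard fact is that multiplication by $q$ on $\mathbb{Z}/p$ acts on $H_{2k-1}(\mathbb{Z}/p;\mathbb{Z})$ by $q^k$. For $k=2$ this is multiplication by $q^2$. One way to see it: $H^2(\mathbb{Z}/p;\mathbb{Z})$ is generated by $c$, on which the automorphism acts by $q$. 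The cohomology ring is $\mathbb{Z}[c]/(pc)$ in even degrees, so the automorphism acts by $q^2$ on $H^4\cong\mathbb{Z}/p$. The universal coefficient isomorphism $H^4\cong\Ext(H_3,\mathbb{Z})$ is natural, so the action on $H_3$ is also by $q^2$. This is the step to state carefully, citing the standard computation via the periodic resolution if needed.

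Including the orientation twist, $t$ acts on $H_3(\mathbb{Z}/p;\mathbb{Z})\otimes\mathbb{Z}^w$ by $w(t)q^2$, or possibly by $w(t)q^{-2}$ with the other convention. Hence the invariants $\ker(w(t)q^{\pm2}-1)$ in $\mathbb{Z}/p$ are all of $\mathbb{Z}/p$ exactly when $q^2\equiv w(t) \bmod p$, using $w(t)=\pm1$. Otherwise $w(t)q^2-1$ is a unit mod $p$ and the invariants vanish. This gives the stated dichotomy.
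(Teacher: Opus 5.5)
Your proof is correct, but it takes a different route from the paper's.

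\textbf{The paper's route.} The paper writes down an explicit free resolution of $\mathbb{Z}$ over $\mathbb{Z}\pi$. It has two copies of $\mathbb{Z}\pi$ in each positive degree, assembled from the periodic resolution of $\mathbb{Z}/p$, with diagonal maps $1-t$, $\wh N t-1$, $1-qt,\dots$ that encode a lift of conjugation by $t$. The paper tensors this with $\mathbb{Z}^w$ and checks cases on the resulting complex $\mathbb{Z}^2\to\mathbb{Z}^2\to\mathbb{Z}^2$ in degrees $5,4,3$. The only relevant entries there are $p$ and $\pm(w(t)q^2-1)$, which amounts to $H_4(\pi;\mathbb{Z}^w)\cong\mathbb{Z}/\gcd(p,\,w(t)q^2-1)$.

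\textbf{Your route.} You work at the level of homology instead. The Lyndon--Hochschild--Serre spectral sequence collapses to a Wang sequence, and $H_4(\mathbb{Z}/p;\mathbb{Z})=0$ reduces everything to the $t$-invariants of $H_3(\mathbb{Z}/p;\mathbb{Z})\otimes\mathbb{Z}^w$. The one genuinely nontrivial input is that $t$ acts on $H_3(\mathbb{Z}/p;\mathbb{Z})$ by $q^{\pm 2}$. You derive this cleanly from $c\mapsto qc$ on $H^2$, the ring structure $\mathbb{Z}[c]/(pc)$, and naturality of the universal coefficient sequence. The entries $w(t)q^2-1$ in the paper's tensored complex are exactly the chain-level shadow of this action.

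\textbf{What each buys.} Your argument explains where the exponent $2$ comes from. It avoids verifying exactness of the resolution and computing the higher lifts that the paper leaves implicit. It also gives the general pattern for free: for $k\ge 1$, $H_{2k}(\pi;\mathbb{Z}^w)\cong\mathbb{Z}/p$ precisely when $q^k\equiv w(t) \bmod p$, and vanishes otherwise. The paper's argument pins down the conventions concretely, whereas you dismiss the $q^{\pm 2}$ ambiguity by symmetry. It also produces a concrete resolution, with a left-module variant, that the paper reuses in \cref{stab-class}, \cref{torsfin-dir} and \cref{torsfin}.
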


\begin{proof}
  We can calculate group homology using the following free resolution for
  the trivial right $\mathbb{Z}\pi$-module $\mathbb{Z}$:
  \begin{equation*}
    \begin{tikzcd}[column sep=large]
      \cdots \arrow[r, "1-T"]
      & \mathbb{Z}\pi \arrow[r, "N"] \arrow[d, phantom, "\oplus"]
      & \mathbb{Z}\pi \arrow[r, "1-T"] \arrow[d, phantom, "\oplus"]
      & \mathbb{Z}\pi. \\
      \cdots \arrow[ur, sloped, "1-qt"', "\partial_3"] \arrow[r, "N"']
      & \mathbb{Z}\pi \arrow[ur, sloped, "\wh Nt-1"', "\partial_2"] \arrow[r, "1-T"']
      & \mathbb{Z}\pi \arrow[ur, sloped, "1-t"', "\partial_1"]
    \end{tikzcd}
  \end{equation*}
  Using this, $H_4(\pi; \mathbb{Z}^w)$ is given by the homology of the following
  chain complex:
  \begin{equation*}
    \begin{tikzcd}[column sep=huge]
      \mathbb{Z} \arrow[r, "0"] \arrow[d, phantom, "\oplus"]
      & \mathbb{Z} \arrow[r, "p"] \arrow[d, phantom, "\oplus"]
      & \mathbb{Z} \arrow[d, phantom, "\oplus"] \\
      \mathbb{Z} \arrow[ur, sloped, "1-w(t)q^2"', "\partial_5\otimes\id"] \arrow[r, "p"']
      & \mathbb{Z} \arrow[r, "0"'] \arrow[ur, sloped, "w(t)q^2-1"', "\partial_4\oplus\id"]
      & \mathbb{Z}.
    \end{tikzcd}
  \end{equation*}
  Going through the cases gives the statement.
\end{proof}

\begin{lemma}
  \label{stab-class}
  Let $M$ be a Poincar\'e $4$-complex with fundamental group $\pi =
  \mathbb{Z}/p\rtimes\mathbb{Z}$, orientation character $w$, and classifying map $c\colon M \to
  B\pi$. Then $\pi_2(M)$ is stably isomorphic to $A \oplus A'$ if $c_{*}[M] = 0
  \in H_4(\pi; \mathbb{Z}^w)$ and is stably free otherwise.
\end{lemma}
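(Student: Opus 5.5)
We split into the two cases allowed by \cref{hfour}. The tool is \cref{stab-iso}: the stable class of $\pi_2(M)$ depends only on $c_*[M]\in H_4(\pi;\mathbb{Z}^w)/\pm\Aut(\pi)$. So it suffices to exhibit, for each possible value of $c_*[M]$, one Poincar\'e complex realizing it and compute its $\pi_2$.

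\emph{Case $c_*[M]=0$.} Take $K$ to be the presentation $2$-complex of $\langle t,T\mid tTt^{-1}T^{-q},T^p\rangle$. Its cellular chain complex is the truncation $C_2\to C_1\to C_0$ of the resolution in the proof of \cref{hfour}, with $\partial_2$ given by the matrix with entries $\wh Nt-1,\ 1-T$ and $N,\ 1-qt$ (up to conventions). By the setup of \cref{sec-ker} (from \cite[Section~5]{KPT-2}), $\pi_2(M)$ is stably isomorphic to $\ker\partial_2\oplus\coker\partial^2$. Alternatively one can apply \cref{stab-iso} directly to the double $D(K)$, which has $c_*[D(K)]=0$ and $\pi_2(D(K))\cong\ker\partial_2\oplus\coker\partial^2$. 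Then:
\begin{itemize}
\item Exactness of the periodic resolution gives $\ker\partial_2=\im\partial_3$. Since $\partial_3$ is injective, this is the image of $\mathbb{Z}\pi^2$ modulo $\ker\partial_3=\im\partial_4$, i.e.\ a cokernel of a $2\times 2$ matrix.
\item Dualizing via the involution, $\coker\partial^2$ is $\mathbb{Z}\pi^2$ modulo the rows (or columns) of $\partial_2$ twisted by $w$.
\end{itemize}
The work is to identify these two modules, stably, with the ideals $A=\langle q-\ol t,N\rangle$ and $A'=\langle q-t,1-T\rangle$. The strategy is to write each as a quotient of $\mathbb{Z}\pi^2$ by one relation vector, and compare with the presentation of the ideal obtained from the map $\mathbb{Z}\pi^2\to\mathbb{Z}\pi$, $(x,y)\mapsto x(q-t)+y(1-T)$ (respectively with $N$ and $q-\ol t$). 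The kernel of this map should be generated by a single syzygy, which one reads off from the relation $(q-t)\cdot(\ast)=(1-T)\cdot(\ast)$ coming from $tT=T^qt$.

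\emph{Case $c_*[M]\neq0$.} This forces $q^2\equiv w(t)\bmod p$ and $c_*[M]$ a nonzero element of $\mathbb{Z}/p$. We need one model with $c_*\ne0$ and free $\pi_2$. The natural candidate is the mapping torus $S^1\wt\times L$ of a self-homotopy-equivalence of a lens space $L$ inducing multiplication by $q$ on $\pi_1$ and of degree $w(t)$, as in \cref{lens}. This has $\pi_2=0$, and its fundamental class maps to a generator of $H_4(\pi;\mathbb{Z}^w)$: one checks via the Wang/Serre sequence that $H_3(L)\to H_3(\mathbb{Z}/p)$ is onto. To cover all nonzero classes modulo $\pm\Aut(\pi)$, we vary the lens space $L_{p,r}$. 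Its image in $H_3(\mathbb{Z}/p)$ is determined by $r$ up to squares, and automorphisms $T\mapsto T^a$ act by $a^2$. So the classes $L_{p,1}$ and $L_{p,r}$ with $r$ a non-residue together hit all nonzero elements, giving stably free (indeed zero) $\pi_2$ in every case.

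\textbf{Main obstacle.} The hard step is the module identification in the first case: showing $\ker\partial_2$ and $\coker\partial^2$ are stably $A$ and $A'$ in the correct pairing, including the $w$-twist. If a direct identification fails, we fall back on showing only that the sum $\ker\partial_2\oplus\coker\partial^2$ is stably $A\oplus A'$, using Schanuel's lemma on the exact sequences $0\to\ker\partial_2\to C_2\to C_1\to C_0\to\mathbb{Z}\to0$ and $0\to A'\to\mathbb{Z}\pi\to\mathbb{Z}/p$-type quotients.
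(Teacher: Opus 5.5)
In the case $c_*[M]=0$ you follow the paper's route: reduce via \cref{stab-iso}, then identify $\ker\partial_2\oplus\coker\partial^2$ with $A'\oplus A$. That identification is the entire content of this case, and there is a gap here. You have not carried it out, and the form you predict for it is wrong.

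The kernel of $\mathbb{Z}\pi^2\to A'$, $(x,y)\mapsto x(q-t)+y(1-T)$, is \emph{not} generated by the single syzygy $(1-T^q,\,t-q\wh N)$ coming from $tT=T^qt$. Because $1-T$ is a zero divisor, $(0,N)$ also lies in the kernel. It is not a left multiple of the first syzygy: $z(1-T^q)=0$ forces $z=yN$, and then the second coordinate $y(t-q^2)N$ is never $N$, because $t-q^2$ is not a unit in $\mathbb{Z}[t^{\pm1}]$.

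With both relations you get exactly the paper's exact sequence. Up to the order of coordinates, these two vectors are the images of the basis under $\partial_4$. So $\ker\partial_2=\im\partial_3\cong C_3/\ker\partial_3=\coker\partial_4\cong A'$. The key point is that $\ker\partial_3$ coincides with the kernel of the first component $(x,y)\mapsto x(1-T)+y(q-t)$ of $\partial_3$.

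Likewise, $\coker\partial^2\cong A$ needs two relations for $(x,y)\mapsto xN+y(q-\ol t)$. These are $(1-T^{-q},0)$, from $(1-T^{-q})N=0$, and $(\ol{t-\wh N},N)$.

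Two smaller slips. First, $\partial_3$ is not injective; you use $\ker\partial_3=\im\partial_4$ in the same sentence. Second, your Schanuel fallback does not work as stated. The two sequences resolve different modules ($\mathbb{Z}$ versus $\mathbb{Z}\pi/A'$), and $\mathbb{Z}\pi/A'\cong\mathbb{Z}[1/q]$ is not of ``$\mathbb{Z}/p$-type''.

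For $c_*[M]\neq0$ your argument is correct but takes a different route from the paper. The paper passes to the index-$(p-1)$ subgroup $\pi'=\langle T,t^{p-1}\rangle\cong\mathbb{Z}/p\times\mathbb{Z}$. It uses that $H_4(\pi';\mathbb{Z})\to H_4(\pi;\mathbb{Z}^w)$ is an isomorphism and that the corresponding cover of the mapping torus $T_l$ is $S^1\times L_l$. It then quotes \cite[Proposition~9.2]{KPT-2} to realize every nonzero class by some $S^1\times L_l$.

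You stay with $\pi$ itself. The Wang sequences over $S^1$ identify $c_*[T_r]$ with the image of $[L_{p,r}]$ in $H_3(\mathbb{Z}/p)\cong H_4(\pi;\mathbb{Z}^w)$. This image is a generator because $L_{p,r}\to B\mathbb{Z}/p$ is $3$-connected. The automorphisms $T\mapsto T^a$, $t\mapsto t$ preserve $w$ and act by $a^2$. So one residue and one non-residue $r$ suffice, with Olum's theorem supplying the required self-map of each $L_{p,r}$.

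Your route avoids the transfer and the identification of the $\pi'$-cover with a product, which tacitly uses $h^{p-1}\simeq\id$. The cost is that you invoke directly the classical computation of $c_*[L_{p,r}]$ up to squares, which is essentially what the cited proposition packages.
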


\begin{proof}
  By \cref{stab-iso}, for fixed $p$, $q$, and $w$, the stable
  isomorphism class of $\pi_2(M)$ depends only on $c_{*}[M]$.

  First let $c_{*}[M] = 0$. As in \cref{sec-ker}, in this case
  $\pi_2(M)$ is stably isomorphic to $\ker\partial_2 \oplus \coker\partial^2$, where
  $(C_{*}, \partial_{*})$ is the following free resolution for the trivial
  left $\mathbb{Z}\pi$-module $\mathbb{Z}$:
  \begin{equation*}
    \begin{tikzcd}[column sep=large]
      \cdots \arrow[r, "1-T"]
      & \mathbb{Z}\pi \arrow[r, "N"] \arrow[d, phantom, "\oplus"]
      & \mathbb{Z}\pi \arrow[r, "1-T"] \arrow[d, phantom, "\oplus"]
      & \mathbb{Z}\pi. \\
      \cdots \arrow[ur, sloped, "q-t"', "\partial_3"] \arrow[r, "N"']
      & \mathbb{Z}\pi \arrow[ur, sloped, "t-\wh N"', "\partial_2"] \arrow[r, "1-T^q"']
      & \mathbb{Z}\pi \arrow[ur, sloped, "1-t"', "\partial_1"]
    \end{tikzcd}
  \end{equation*}

  The summand $\ker\partial_2 \cong \coker\partial_4$ can be seen to be isomorphic to
  the left ideal $A'$ by the exact sequence
  \begin{equation*}
    \begin{tikzcd}[column sep=large]
      \mathbb{Z}\pi \arrow[r, "N"] \arrow[d, phantom, "\oplus"]
      & \mathbb{Z}\pi \arrow[r, "1-T"] \arrow[d, phantom, "\oplus"]
      & \mathbb{Z}\pi. \\
      \mathbb{Z}\pi \arrow[ur, sloped, "t-q\wh N"', "\partial_4"] \arrow[r,
      "1-T^q"']
      & \mathbb{Z}\pi \arrow[ur, sloped, "q-t"']
    \end{tikzcd}
  \end{equation*}
  
  Similarly, the summand $\coker\partial^2$ can be seen to be isomorphic
  to the left ideal $A$ by the following exact sequence:
  \begin{equation*}
    \begin{tikzcd}[column sep=large]
      \mathbb{Z}\pi \arrow[r, "1-T^{-q}"] \arrow[d, phantom, "\oplus"]
      & \mathbb{Z}\pi \arrow[r, "N"] \arrow[d, phantom, "\oplus"]
      & \mathbb{Z}\pi. \\
      \mathbb{Z}\pi \arrow[ur, sloped, "\ol{t-\wh N}"', "\partial^2"]
      \arrow[r, "N"']
      & \mathbb{Z}\pi \arrow[ur, sloped, "q-\ol t"']
    \end{tikzcd}
  \end{equation*}

  Now consider nontrivial $c_{*}[M]$. By \cref{hfour} this can
  happen only if $q^2 \equiv w(t) \mod p$.

  Let $L_l$ denote the lens space $L(p, l)$. By \cite[Theorem~V]{olum}
  we have that, for any $l$, there is a self-map of degree $w(t)$ on
  $L_l$ that induces multiplication by $q$ on the fundamental
  group. By Poincar\'e duality, this map is a self homotopy
  equivalence. Hence the mapping torus $T_l$ is a Poincar\'e
  $4$-complex with fundamental group $\pi$ and vanishing $\pi_2$. In
  particular, if $q=1$, we have that $T_l$ is just the product $S^1 \times
  L_l$.

  Consider the subgroup $\pi' = \langle T, t^{p-1}\rangle$ of index $p-1$. This is
  isomorphic to the direct product $\mathbb{Z}/p \times \mathbb{Z}$. The induced map $H_4(\pi';
  \mathbb{Z}) \to H_4(\pi;\mathbb{Z}^w)$ is an isomorphism, and the cover of $T_l$ with
  respect to this subgroup is $S^1\times L_l$, hence the preimage of
  $c_{*}[T_l]$ in $H_4(\pi'; \mathbb{Z})$ is equal to $c_{*}[S^1\times L_l]$. By
  \cite[Proposition~9.2]{KPT-2}, we have that any nonzero element of
  $H_4(\pi'; \mathbb{Z})$ is equal to $c_{*}[S^1\times L_l]$ for some $l$, and so
  $c_{*}[M] = c_{*}[T_l]\in H_4(\pi; \mathbb{Z}^w)$. By \cref{stab-iso} $\pi_2(M)$ is
  stably isomorphic to $\pi_2(T_l) \cong 0$, so $\pi_2(M)$ is indeed stably
  free in this case.
\end{proof}

The next two lemmas verify that $\pi$ satisfies the conditions which
allow us to apply the results from \cref{sec-ker,sec-diff}.

\begin{lemma}
  \label{dualiso}
  The ideals $A$ and $A'$ are dual to each other. In particular, they
  are both reflexive.
\end{lemma}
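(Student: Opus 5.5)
Our strategy uses the exact sequences already written down in the proof of \cref{stab-class}, which give explicit presentations: $A' \cong \ker\partial_2 \cong \coker\partial_4$ and $A \cong \coker\partial^2$. We also use the general fact from \cref{sec-ker} that $\ker\partial_2 \cong (\coker\partial^2)^{*}$, i.e.\ $A' \cong A^{*}$ by \cite[Lemma~3.12]{KPT}. The plan is to establish this directly for the ideals, then prove the second duality $A'^{*} \cong A$. Reflexivity follows formally: $A^{**} \cong A'^{*} \cong A$ and $A'^{**} \cong A^{*} \cong A'$. We must still check that the composite isomorphism is $\ev$, not merely abstract. Alternatively, by \cref{assum-rem}, reflexivity of $A$ is equivalent to $H^2(\pi;\mathbb{Z}\pi)=H^3(\pi;\mathbb{Z}\pi)=0$. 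That holds because $\pi$ is virtually $\mathbb{Z}$, a virtual duality group of dimension $1$, so $H^k(\pi;\mathbb{Z}\pi)$ vanishes for $k\neq 1$. This gives reflexivity of $A$ cleanly, and by symmetry that of $A' = A^{*}$ as a dual of a finitely presented module with these vanishing conditions. I would use this route for the ``in particular'' clause.

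For the dualities themselves, I would compute $\Hom_{\mathbb{Z}\pi}(A,\mathbb{Z}\pi)$ concretely. A homomorphism from the left ideal $A=\langle q-\ol t, N\rangle$ to $\mathbb{Z}\pi$ is determined by images $x=f(q-\ol t)$ and $y=f(N)$ satisfying the relations among the generators. These relations are read off the presentation of $A$ from the exact sequence in \cref{stab-class}: the relation module is generated by the columns of $\ol{\partial^2}$ at the previous stage, namely $(1-T^{-q})$ on $N$ together with a mixed relation $\ol{t-\wh N}\cdot(q-\ol t)$-type relation and $N\cdot(q - \ol t)$-type terms. So $A^{*}$ is the kernel of the dual map $\mathbb{Z}\pi^2 \to \mathbb{Z}\pi^2$, which is exactly a stage of the dual complex $C^{*}$. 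By exactness of the resolution $C_*$, that kernel is $\ker\partial_2 = \operatorname{im}\partial_3$. Since $\partial_3$ is injective modulo the next image, this is identified with $\coker\partial_4 \cong A'$ via the displayed sequence. Hence $A^{*}\cong A'$. Symmetrically, $A'^{*}$ is computed from the presentation $\coker\partial_4$. Dualizing gives $\ker(\partial^4)$ in the cochain complex, which by exactness of $C^{*}$ in the relevant degrees is $\operatorname{im}\partial^3 \cong \coker\partial^2 \cong A$.

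The main obstacle is precisely the exactness of the dual complex $\Hom(C_*,\mathbb{Z}\pi)$ at the needed spots, which is $H^k(\pi;\mathbb{Z}\pi)=0$ for $k=2,3,4$. Since $\pi$ contains $\mathbb{Z}$ with finite index $p$, Shapiro's lemma gives $H^k(\pi;\mathbb{Z}\pi) \cong H^k(\mathbb{Z};\mathbb{Z}[\mathbb{Z}])^{p}$, which vanishes for $k\ge 2$. The resolution is periodic of period $2$ after the first stages, so one could also verify this directly by checking that $\ker(1-T)=\operatorname{im} N$ and $\ker N = \operatorname{im}(1-T)$ combine correctly with the $t$-twisted diagonal maps. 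Tracking the involutions and the $q$-twists, so that the identifications land on the ideals as written rather than on conjugates, is routine but error-prone bookkeeping.
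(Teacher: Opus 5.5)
Your proposal is correct and is essentially the paper's proof: $A^{*}\cong A'$ is \cite[Lemma~3.12]{KPT}. Your dualization of $A'\cong\coker\partial_4$, using $H^2(\pi;\mathbb{Z}\pi)=H^3(\pi;\mathbb{Z}\pi)=0$ from Shapiro's lemma, just unpacks the exact sequence $0\to H^2(\pi;\mathbb{Z}\pi)\to A\to (A')^{*}\to H^3(\pi;\mathbb{Z}\pi)\to 0$ of \cite[Lemma~3.13]{KPT} that the paper cites, whose middle map is $\ev_A$ as in \cref{assum-rem}.

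Two small corrections. Shapiro's lemma gives $H^k(\pi;\mathbb{Z}\pi)\cong H^k(\mathbb{Z};\mathbb{Z}[\mathbb{Z}])$ with no $p$-th power; this is harmless here, since only $k=2,3$ matter and both sides vanish. Reflexivity of $A'=A^{*}$ should be justified not by ``symmetry'' but by the identity $(\ev_A)^{*}\circ\ev_{A^{*}}=\id_{A^{*}}$, which shows that the dual of a reflexive module is reflexive.
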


\begin{proof}
  We have that $A'$ is the dual of $A$ by \cite[Lemma~3.12]{KPT}. On
  the other hand by \cite[Lemma~3.13]{KPT} there exists an exact
  sequence
  \[
    0 \to H^2(\pi; \mathbb{Z}\pi) \to A \to (A')^{*} \to H^3(\pi; \mathbb{Z}\pi) \to 0.
  \]
  By Shapiro's Lemma $H^2(\pi; \mathbb{Z}\pi)$ and $H^3(\pi; \mathbb{Z}\pi)$ are both trivial,
  hence $A$ is the dual of $A'$.
\end{proof}

\begin{lemma}
  \label{histar-cyclic}
  The group $H_2(\pi; A^w)$ is trivial or cyclic of order $p$.
\end{lemma}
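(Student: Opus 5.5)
The plan is to reduce $H_2(\pi; A^w)$, by dimension shifting, to a homology group with coefficients in an explicit cyclic quotient module. Then compute that group with the Lyndon--Hochschild--Serre (Wang) sequence for the extension $\mathbb{Z}/p \to \pi \to \mathbb{Z}$.

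\emph{Step 1: dimension shifting.} Since $A$ is a left ideal of $\mathbb{Z}\pi$, there is a short exact sequence $0 \to A \to \mathbb{Z}\pi \to \mathbb{Z}\pi/A \to 0$. Twisting by $w$ and using that $H_k(\pi; \mathbb{Z}\pi^w) = 0$ for $k \geq 1$, the long exact sequence gives
\[
  H_2(\pi; A^w) \cong H_3(\pi; (\mathbb{Z}\pi/A)^w).
\]
This is the same kind of identification as the chain $\Tor_3(F/A,\mathbb{Z}) \cong H_2(\pi;A^w)$ used in the proof of \cref{right-cont-top-general}, and I would check the involution conventions exactly as there.

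\emph{Step 2: identify $Q := \mathbb{Z}\pi/A$ with $A = \langle q-\ol t, N\rangle$.}
\begin{itemize}
\item Writing $\mathbb{Z}\pi = \bigoplus_k t^k\mathbb{Z}[T]$, the relation $x\ol t \equiv qx$ lets every $t$-power be traded for powers of $q^{\pm1}$.
\item Dividing further by $N$ kills the norm element.
\item So, as an abelian group, $Q$ should be a quotient of $\mathbb{Z}[T]/(N) \cong \mathbb{Z}[\zeta_p]$, possibly localised at $q$. I expect something like $\mathbb{Z}[1/q][\zeta_p]$ or a finite quotient of it.
\item On $Q$, the element $T$ acts through $\zeta_p$ and $t$ acts by $\pm q^{\pm1}$ composed with the Galois automorphism $\zeta \mapsto \zeta^{q}$.
\end{itemize}
I would pin this down carefully, because $q$ is only invertible mod $p$ and not in $\mathbb{Z}$. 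If $Q$ turns out to be a torsion module such as $\mathbb{Z}[\zeta_p]/(q-\text{unit})$, the argument below still works, since only $p$-local information matters. This identification is the step I expect to be the main obstacle.

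\emph{Step 3: homology over $\mathbb{Z}/p$.} Restricted to $\mathbb{Z}/p = \langle T\rangle$, the module $Q$ is built from $\mathbb{Z}[\zeta_p]$, the augmentation-ideal-type module. Its Tate homology is $\mathbb{Z}/p$ in one parity and $0$ in the other. Hence $H_j(\mathbb{Z}/p; Q)$ for $j = 2,3$ is $\mathbb{Z}/p$ in one degree and $0$ in the other, with the caveat that if $p$ is invertible or $Q$ has no $p$-torsion, everything vanishes. Each of these groups is generated by an explicit class in the periodic resolution from \cref{hfour}, so I can read off how $t$ acts on it: by some scalar $u \in (\mathbb{Z}/p)^\times$ built from $w(t)$ and powers of $q$.

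\emph{Step 4: the Wang sequence.} For $\pi = \mathbb{Z}/p \rtimes \mathbb{Z}$ there is a short exact sequence
\[
  0 \to H_0\bigl(\mathbb{Z}; H_3(\mathbb{Z}/p; Q^w)\bigr) \to H_3(\pi; Q^w) \to H_1\bigl(\mathbb{Z}; H_2(\mathbb{Z}/p; Q^w)\bigr) \to 0.
\]
Exactly one of the two outer terms has a nonzero coefficient group, namely $\mathbb{Z}/p$ with $t$ acting by $u$. Its coinvariants, respectively invariants, are therefore $\mathbb{Z}/p$ if $u = 1$ and $0$ otherwise. This gives $H_2(\pi;A^w) \in \{0, \mathbb{Z}/p\}$, as claimed.

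As a cross-check, I would alternatively compute $H_2(\pi;A^w)$ directly from the explicit two-periodic free resolution of $A$ in the proof of \cref{stab-class}, tensored with $\mathbb{Z}^w$. That resolution yields a small complex of copies of $\mathbb{Z}$ with maps $0$, $p$, and $1 - w(t)q^{j}$. Its homology is visibly $0$ or $\mathbb{Z}/p$, with the answer depending on a congruence condition on $q$ and $w(t)$. This direct route avoids Step 2 entirely, so I would fall back on it if the identification of $\mathbb{Z}\pi/A$ proves awkward.
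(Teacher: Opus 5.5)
Your proof is correct, and it takes a genuinely different route from the paper's.

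\textbf{The paper's route.} The paper never identifies $\mathbb{Z}\pi/A$. It obtains $p$-torsion by a transfer to the index-$p$ subgroup $\langle t\rangle\cong\mathbb{Z}$, where $H_2(\mathbb{Z};A^w)=0$. It obtains cyclicity by identifying $H_2(\pi;A^w)$ with $\Tor_1^{\mathbb{Z}\pi}(\ker\partial_1,\mathbb{Z}\pi/A)$, which embeds in $A'\otimes_{\mathbb{Z}\pi}\mathbb{Z}\pi/A$, and then exhibiting a single generator.

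\textbf{Your route.} You use the normal subgroup $\langle T\rangle\cong\mathbb{Z}/p$ instead, for which the Lyndon--Hochschild--Serre spectral sequence has only two columns. The step you flag as the main obstacle is routine. Write $\mathbb{Z}\pi=\bigoplus_k\mathbb{Z}[\langle T\rangle]\,t^k$. The assignment $a t^k\mapsto(w(t)q)^{-k}a$ identifies $\mathbb{Z}\pi/\mathbb{Z}\pi(q-\ol t)$ with $\mathbb{Z}[1/q]\otimes\mathbb{Z}[\langle T\rangle]$. Since $N$ is central, $Q=\mathbb{Z}\pi/A\cong\mathbb{Z}[1/q]\otimes\mathbb{Z}[\zeta_p]$, with $T$ acting by $\zeta_p$ and $t$ acting by $(w(t)q)^{-1}$ times the automorphism $\zeta_p\mapsto\zeta_p^q$. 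Because $p\nmid q$, this gives $H_2(\langle T\rangle;Q^w)\cong\mathbb{Z}/p$ and $H_3(\langle T\rangle;Q^w)=0$. Hence $H_2(\pi;A^w)\cong H_3(\pi;Q^w)$ is the group of $t$-invariants of a single copy of $\mathbb{Z}/p$. For the lemma as stated you do not need the scalar $u$ at all. What matters is that only one of the two outer terms of your short exact sequence is nonzero; otherwise you could get order $p^2$.

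\textbf{Comparison.} The paper's route is coordinate-free and reuses the transfer device from \cref{weird-ideal-B-inj,torsfin}. Yours is more explicit and avoids the generator argument in the $\Tor$ sequence. If you do track $u$, it also tells you exactly when $H_2(\pi;A^w)$ is nonzero.

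\textbf{Drop the hedges.} None of the following affects your main argument, but each statement is wrong or unsupported.
\begin{enumerate}
\item In Step 3, the remark that everything vanishes if $Q$ has no $p$-torsion is false. $\mathbb{Z}[\zeta_p]$ is torsion-free, yet $H_{2k}(\mathbb{Z}/p;\mathbb{Z}[\zeta_p])\cong\mathbb{Z}/p$.
\item In Step 2, the claim that a torsion $Q$ would work equally well is unsupported.
\item The fallback ``cross-check'' is not available as you describe it. The proof of \cref{stab-class} gives only the presentation $C^1\xrightarrow{\partial^2}C^2\to A\to 0$, not a periodic free resolution of $A$. Continuing it leftwards by $\partial^1$ is not exact, because $H^1(\pi;\mathbb{Z}\pi)\cong\mathbb{Z}$. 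So the small complex of copies of $\mathbb{Z}$ you predict has no justification.
\end{enumerate}
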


\begin{proof}
  Let $K$ be a presentation comlpex and recall that $H^2(K; \mathbb{Z}\pi)$ is
  stably isomorphic to $A$. Consider the $\mathbb{Z}$ subgroup of $\pi$ of index
  $p$. There is a transfer map on group homology such that the
  composition
  \[
    H_2(\pi; A^w) \xrightarrow{\tr} H_2(\mathbb{Z}; A^w) \to H_2(\pi; A^w)
  \]
  is given by multiplication by $p$. As a $\mathbb{Z}\mathbb{Z}$-module $A$ is stably
  isomorphic to $H^2(\wh K; \mathbb{Z}\mathbb{Z})$, where $\wh K$ is the
  $p$-sheeted cover of $K$ of corresponding to the $\mathbb{Z}$
  subgroup. However $\wh K$ is still a finite $2$-complex, hence
  by \cite[Lemma~1.8]{KPT} the group $H^2(\wh K; \mathbb{Z}\mathbb{Z})$ is stably
  isomorphic to the dual of the augmentation ideal in $\mathbb{Z}\mathbb{Z}$, which is
  free. Hence we have that $H_2(\mathbb{Z}; A^w) = 0$ and the group $H_2(\pi;
  A^w)$ is annihilated by $p$.

  Next, let $(C_{*}, \partial_{*})$ be the free resolution of $\mathbb{Z}$ from
  before. Tensoring the short exact sequence $0 \to A' \to C_2 \to \ker \partial_1
  \to 0$ with the quotient $\mathbb{Z}\pi/A$ gives the exact sequence
  \[
    0 \to \Tor_1^{\mathbb{Z}\pi}(\ker\partial_1, \mathbb{Z}\pi/A) \to A'\otimes_{\mathbb{Z}\pi} \mathbb{Z}\pi/A \to C_2\otimes_{\mathbb{Z}\pi} \mathbb{Z}\pi/A.
  \]
  We have that $A' \otimes_{\mathbb{Z}\pi} \mathbb{Z}\pi/A$ is generated by the elements $(q-t)\otimes1$
  and $(1-T)\otimes1$. The former maps trivially to $C_2\otimes_{\mathbb{Z}\pi}\mathbb{Z}\pi/A$, hence
  $\Tor_1^{\mathbb{Z}\pi}(\ker\partial_1, \mathbb{Z}\pi/A)$ is generated by $(q-t)\otimes1$. But now by
  dimension shifting we have that $\Tor_1^{\mathbb{Z}\pi}(\ker\partial_1, \mathbb{Z}\pi/A) \cong
  \Tor_2(\ker\partial_0, \mathbb{Z}\pi/A) \cong \Tor_3(\mathbb{Z}^w, \mathbb{Z}\pi/A) \cong H_2(\pi; A^w)$. The
  twisting is introduced by the conversion of the left-module short
  exact sequence $\ker\partial_0 \to C_0 \to \mathbb{Z}$ into right modules. So we have
  that $H_2(\pi; A^w)$ is cyclic, but previously we showed that it is
  $p$-torsion, so we are done.
\end{proof}

\section{The classification for stably free $\pi_2(M)$}
\label{sec1}

We now begin the proof of \cref{mainth}. The proof is split into two
cases, depending on the stable isomorphism class of $\pi_2(M)$ according
to \cref{stab-class}. In this section we prove the easier case, when
$\pi_2(M)$ is stably free; the other case is proved in \cref{sec2}.

\begin{lemma}
  \label{main-free}
  Let $p$ be an odd prime, and let $M$ and $M'$ be Poincar\'e
  $4$-complexes with isomorphic quadratic $2$-types, where the
  fundamental group is $\pi = \mathbb{Z}/p \rtimes \mathbb{Z}$, the orientation character is
  $w$, and $\pi_2(M) \cong \pi_2(M')$ is stably free. Let $c \colon M \to B\pi$
  and $c' \colon M' \to B\pi$ be classifying maps. If $c_{*}[M] =
  c_{*}'[M'] \in H_4(\pi; \mathbb{Z}^w)$, then $M$ and $M'$ are homotopy equivalent
  over their (common) Postnikov $2$-type $B$.
\end{lemma}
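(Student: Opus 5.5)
The plan is to reduce everything to \cref{bauesbleile}: we will show that $f_{*}[M] = f'_{*}[M'] \in H_4(B; \mathbb{Z}^w)$ for suitable $3$-connected maps $f\colon M \to B$ and $f'\colon M' \to B$ to the common Postnikov $2$-type. First fix the maps. The isomorphism of quadratic $2$-types realises a homotopy equivalence between the Postnikov $2$-types, so we may choose $f$ and $f'$ to $B$ such that $f$ and $f'$ induce the given identifications on $\pi_1$ and $\pi_2$, and such that $f_{*}\lambda_M = f'_{*}\lambda_{M'}$. Composing with the projection $B \to B\pi$ gives classifying maps, which we take to be $c$ and $c'$. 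By hypothesis $c_{*}[M] = c'_{*}[M']$. The point to be careful about here is that the identification of $\pi_1$ used in the hypothesis is the one coming from the isomorphism of $2$-types. I will take this as part of the setup; otherwise one adjusts by an automorphism, as in \cref{stab-iso}.

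Next analyse $H_4(B;\mathbb{Z}^w)$ via the Serre spectral sequence of $\wt B \to B \to B\pi$, whose $E^2$ page is $H_p(\pi; H_q(\wt B;\mathbb{Z})^w)$. Write $P = \pi_2(B)$, which is stably free, hence projective.
\begin{itemize}
\item Since $H_p(\pi; \mathbb{Z}\pi^w) = 0$ for $p \geq 1$, we get $H_p(\pi; P^w) = 0$ for $p \geq 1$.
\item The rows $q = 1$ and $q = 3$ vanish by Hurewicz.
\end{itemize}
Hence the only terms on the $4$-diagonal are $H_0(\pi; H_4(\wt B)^w) = \mathbb{Z}^w\otimes_{\mathbb{Z}\pi}\Gamma(P)$ and $H_4(\pi;\mathbb{Z}^w)$. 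This yields an exact sequence
\[
\mathbb{Z}^w\otimes_{\mathbb{Z}\pi}H_4(B;\mathbb{Z}\pi) \xrightarrow{\varphi_B} H_4(B;\mathbb{Z}^w) \to H_4(\pi;\mathbb{Z}^w),
\]
whose second map is the edge homomorphism, i.e.\ the map induced by $B \to B\pi$. The difference $f_{*}[M]-f'_{*}[M']$ maps to $c_{*}[M]-c'_{*}[M'] = 0$. Therefore it equals $\varphi_B(\beta)$ for some $\beta$, which is condition \eqref{diff} of HKPR.

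It remains to show $\varphi_B(\beta) = 0$. By \cref{comm-diag}, $\ev^{*}\mathcal{B}_{P}(\beta) = \Theta_B\varphi_B(\beta) = f_{*}\lambda_M - f'_{*}\lambda_{M'} = 0$. We have $H^2(\pi;\mathbb{Z}\pi) = H^3(\pi;\mathbb{Z}\pi) = 0$ by Shapiro's lemma applied to the index-$p$ subgroup $\mathbb{Z}$, as in \cref{dualiso}. Hence $\ev^{*}$ is an isomorphism by \cite[Lemma~5.1]{KPR}, and so $\mathcal{B}_P(\beta) = 0$. Since $p$ is odd, $\pi$ has no elements of order $2$, so \cref{B-inj} applies to the projective module $P$ and gives $\beta = 0$. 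Thus $f_{*}[M] = f'_{*}[M']$, and \cref{bauesbleile} gives the homotopy equivalence over $B$.

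I expect the main obstacle to be the spectral sequence step. One must check that the edge map really is induced by $B \to B\pi$ and that nothing else survives on the $4$-diagonal; possible $d_5$ differentials into $\mathbb{Z}^w\otimes\Gamma(P)$ only shrink $\im\varphi_B$ and do not affect exactness. One must also check that the maps $f, f'$ chosen via the $2$-type isomorphism are compatible with the given $c, c'$.
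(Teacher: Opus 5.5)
Your proposal is correct and follows essentially the same route as the paper: the Serre spectral sequence yields the exact sequence with edge map $\varphi_B$, and injectivity of $\ev^{*}\circ\mathcal{B}_{H_2(B;\mathbb{Z}\pi)}$ (via \cref{B-inj} and \cite[Lemma~5.1]{KPR}) forces $f_{*}[M]=f'_{*}[M']$, after which \cref{bauesbleile} finishes the argument. Your extra checks are just the points the paper cites or leaves implicit: the absence of elements of order $2$, the vanishing of $H^2(\pi;\mathbb{Z}\pi)$ and $H^3(\pi;\mathbb{Z}\pi)$ by Shapiro's lemma, and the choice of $c,c'$ as $f,f'$ composed with $B\to B\pi$.
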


\begin{proof}
  Consider the Serre spectral sequence of the fibration $\wt B
  \to B \to B\pi$. This has $E^2$ page $H_k(\pi; H_l(\wt B; \mathbb{Z})^w)$ and
  converges to $H_{k+l}(B; \mathbb{Z}^w)$. Since $H_2(\wt B; \mathbb{Z}) \cong
  \pi_2(M)$ is stably free, the only nontrivial terms on the
  $4$-diagonal are $H_0(\pi; H_4(\wt B; \mathbb{Z})^w) \cong \mathbb{Z}^w\otimes_{\mathbb{Z}\pi}H_4(B;
  \mathbb{Z}\pi)$ and $H_4(\pi; \mathbb{Z}^w)$. Since the map $\varphi_B \colon \mathbb{Z}^w\otimes_{\mathbb{Z}\pi}H_4(B;
  \mathbb{Z}\pi) \to H_4(B; \mathbb{Z}^w)$ is the edge homomorphism, we obtain the exact
  sequence
  \[
    \mathbb{Z}^w\otimes_{\mathbb{Z}\pi}H_4(B; \mathbb{Z}\pi)\xrightarrow{\varphi_B}H_4(B; \mathbb{Z}^w)\to H_4(\pi; \mathbb{Z}^w) \to 0.
  \]
  Let $f \colon M \to B$ and $f' \colon M' \to B$ be the maps to the
  Postnikov $2$-type. Then $f_{*}[M] - f_{*}'[M'] \in H_4(B; \mathbb{Z}^w)$ maps
  to $c_{*}[M] - c_{*}'[M'] \in H_4(\pi; \mathbb{Z}^w)$.

  Now by \cref{comm-diag} we fit the above exact sequence into a
  commutative diagram
  \begin{equation*}
    \begin{tikzcd}
      \mathbb{Z}^w \otimes_{\mathbb{Z}\pi} H_4(B; \mathbb{Z}\pi) \arrow[r, "\varphi_B"] \arrow[d, "\mathcal{B}_{H_2(B;, \mathbb{Z}\pi)}"]
      & H_4(B; \mathbb{Z}^w) \arrow[d, "\Theta_B"] \arrow[r]
      & H_4(\pi; \mathbb{Z}^w) \arrow[r]
      & 0 \\
      \Her^w(H_2(B; \mathbb{Z}\pi)^{*}) \arrow[r, "\ev^{*}"]
      & \Her^w(H^2(B; \mathbb{Z}\pi)).
    \end{tikzcd}
  \end{equation*}
  The map $\mathord{\ev^{*}} \circ \mathcal{B}_{H_2(B; \mathbb{Z}\pi)}$ is injective by
  \cref{B-inj} and \cite[Lemma~5.1]{KPR}, and therefore so is the map
  $\varphi_B$. Recalling that the element $f_{*}[M] - f_{*}'[M']$ is in the
  kernel of $\Theta_B$, a diagram chase shows that $c_{*}[M] = c_{*}'[M'] \in
  H_4(\pi; \mathbb{Z}^w)$ implies $f_{*}[M] = f_{*}'[M'] \in H_4(B; \mathbb{Z}^w)$, so the
  theorem follows by \cref{bauesbleile}.
\end{proof}

\begin{theorem}
  \label{form-dist-free}
  Let $p$ be an odd prime, and let $M$ and $M'$ be a Poincar\'e
  $4$-complexes with fundamental group $\mathbb{Z}/p \rtimes \mathbb{Z}$ and isomorphic
  quadratic $2$-types. Suppose that $\pi_2(M) \cong \pi_2(M')$ is stably
  free. If $\lambda_M^{\mathbb{Z}/p} \cong \lambda_{M'}^{\mathbb{Z}/p}$, then $M$ and $M'$ are homotopy
  equivalent.
\end{theorem}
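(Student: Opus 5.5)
By \cref{main-free} it suffices to show that $\lambda_M^{\mathbb{Z}/p} \cong \lambda_{M'}^{\mathbb{Z}/p}$ forces $c_{*}[M] = c'_{*}[M'] \in H_4(\pi; \mathbb{Z}^w)$, possibly after changing the identification of quadratic $2$-types by an automorphism. If $q^2 \not\equiv w(t) \bmod p$, then $H_4(\pi;\mathbb{Z}^w)=0$ by \cref{hfour} and there is nothing to prove. So assume $q^2\equiv w(t)$. In this case $H_4(\pi;\mathbb{Z}^w)\cong\mathbb{Z}/p$ and the form takes values in $H_0(M;(\mathbb{Z}/p)^{wq^2})\cong\mathbb{Z}/p$.

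The plan is to read off $c_{*}[M]$ from $\lambda^{\mathbb{Z}/p}_M$ restricted to the subgroup $c^{*}H^2(\pi;(\mathbb{Z}/p)^q)\subseteq H^2(M;(\mathbb{Z}/p)^q)$. By naturality of the cap product, for $\alpha,\beta\in H^2(\pi;(\mathbb{Z}/p)^q)$ we have
\[
\lambda_M^{\mathbb{Z}/p}(c^{*}\alpha,c^{*}\beta)=\beta\cap(\alpha\cap c_{*}[M]),
\]
where the right-hand side is computed in $B\pi$. The commutative diagram in the definition of $\lambda^{\mathbb{Z}/p}_M\cong\lambda^{\mathbb{Z}/p}_{M'}$ says that the isomorphism $\varphi$ restricts, on the left-hand terms, to the identification induced by the isomorphism of fundamental groups. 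Hence the two bilinear forms
\[
(\alpha,\beta)\mapsto \beta\cap(\alpha\cap c_{*}[M]), \qquad (\alpha,\beta)\mapsto \beta\cap(\alpha\cap c'_{*}[M'])
\]
on $H^2(\pi;(\mathbb{Z}/p)^q)$ agree. It therefore remains to show that the pairing
\[
H_4(\pi;\mathbb{Z}^w)\to \operatorname{Bil}(H^2(\pi;(\mathbb{Z}/p)^q)),\qquad x\mapsto\bigl((\alpha,\beta)\mapsto\beta\cap(\alpha\cap x)\bigr)
\]
is injective. Since $H_4(\pi;\mathbb{Z}^w)\cong\mathbb{Z}/p$ and the map is linear, it suffices to find a single $x\neq0$ and classes $\alpha,\beta$ with nonzero pairing.

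For this I would compute with the explicit periodic resolution of \cref{hfour}. First, $H^2(\pi;(\mathbb{Z}/p)^q)$ should be $\mathbb{Z}/p$, spanned by a class $u$ that restricts to a generator of $H^2(\mathbb{Z}/p;\mathbb{Z}/p)$. Second, $u\cap x$ should generate $H_2(\pi;(\mathbb{Z}/p)^{wq})$, which is again $\mathbb{Z}/p$ when $q^2\equiv w(t)$. The cleanest route is to pass to the index-$(p-1)$ subgroup $\pi'=\langle T,t^{p-1}\rangle\cong\mathbb{Z}/p\times\mathbb{Z}$, as in \cref{stab-class}. There $H_4(\pi';\mathbb{Z})\to H_4(\pi;\mathbb{Z}^w)$ is an isomorphism, the coefficients become untwisted, and $H^*(\mathbb{Z}/p\times\mathbb{Z};\mathbb{F}_p)$ is known. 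A generator of $H_4(\pi';\mathbb{Z})$ is the Künneth product $[S^1]\times y_3$ with $y_3\in H_3(\mathbb{Z}/p)$, and capping twice with the polynomial generator $u\in H^2(\mathbb{Z}/p;\mathbb{F}_p)$ gives $[S^1]\times y_{-1}$. Since that degree is negative, I would correct this: one should use mixed classes such as $u$ and $e\cdot v$ with $e\in H^1(S^1)$, $v\in H^1(\mathbb{Z}/p)$, so that the total degree is $4$ and the evaluation on $[S^1]\times y_3$ is $\langle u\cdot v, y_3\rangle\neq0$. Here the key fact is that $u v$ generates $H^3(\mathbb{Z}/p;\mathbb{F}_p)$, and this uses that $p$ is odd.

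This gives nondegeneracy over $\pi'$, and I would then transfer it back to $\pi$. The transfer multiplies by $p-1$, which is invertible mod $p$, and the classes $u$, $ev$ descend to $(\mathbb{Z}/p)^q$-twisted classes of $\pi$, because $t$ acts on $H^1(\mathbb{Z}/p)$ and $H^2(\mathbb{Z}/p)$ by powers of $q$; this is exactly what the twist $q$ compensates. The main obstacle is this bookkeeping: checking that the relevant $\pi$-invariant classes exist in $H^2(\pi;(\mathbb{Z}/p)^q)$ with exactly this twisting, and that the pairing survives the transfer. Once injectivity is established, $c_{*}[M]=c'_{*}[M']$ follows, and \cref{main-free} gives the homotopy equivalence.
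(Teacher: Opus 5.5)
Your argument is correct and is essentially the paper's. You reduce via \cref{main-free} and the compatibility diagram to showing that the pairing on $H^2(\pi;(\mathbb{Z}/p)^q)$ detects $H_4(\pi;\mathbb{Z}^w)\cong\mathbb{Z}/p$. You then pass to $\pi'=\langle T,t^{p-1}\rangle\cong\mathbb{Z}/p\times\mathbb{Z}$, whose index $p-1$ is prime to $p$. The twist by $q$ makes $t$ act trivially on $H^2(\pi';\mathbb{Z}/p)$, because $t$ acts on $H^1(\mathbb{Z}/p)$ and $H^2(\mathbb{Z}/p)$ by the same scalar. Hence restriction $H^2(\pi;(\mathbb{Z}/p)^q)\to H^2(\pi';\mathbb{Z}/p)\cong(\mathbb{Z}/p)^2$ is onto, so drop your preliminary guess $H^2(\pi;(\mathbb{Z}/p)^q)\cong\mathbb{Z}/p$, which is wrong.

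Your choice of the mixed pair $u$, $e v$ is actually more precise than the paper's wording. The paper only says ``the cup product is nonzero'', but $u\cup u\neq 0$ in $H^4(\pi';\mathbb{Z}/p)$ while it evaluates trivially on $H_4(\pi';\mathbb{Z})$. So what is really needed is a product that pairs nontrivially with the generator $[S^1]\times y_3$, which is exactly what your pair supplies.
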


\begin{proof}
  By \cref{main-free}, it suffices to show that $c_{*}[M] = c_{*}'[M']
  \in H_4(\pi; \mathbb{Z}^w)$. This is trivial if the group $H_4(\pi; \mathbb{Z}^w)$ vanishes,
  hence we may assume that $q^2 \equiv w(t) \mod p$ by \cref{hfour}. Our
  notion of isomorphism $\lambda_M^{\mathbb{Z}/p} \cong \lambda_{M'}^{\mathbb{Z}/p}$ ensures that the
  two forms induce the same form
  \[
    \lambda_{\pi}^{\mathbb{Z}/p} \colon H^2(\pi; (\mathbb{Z}/p)^q) \times H^2(\pi; (\mathbb{Z}/p)^q) \to \mathbb{Z}/p,
  \]
  i.e.\ we have that $(\alpha\cup\beta)\cap c_{*}[M] = (\alpha\cup\beta)\cap c_{*}'[M']$ for all
  $\alpha,\beta \in H^2(\pi; (\mathbb{Z}/p)^q)$. So we can conclude that $c_{*}[M] =
  c_{*}'[M']$ as desired if we show that the cup product $H^2(\pi;
  (\mathbb{Z}/p)^q) \times H^2(\pi; (\mathbb{Z}/p)^q) \to H^4(\pi; (\mathbb{Z}/p)^w)$ is nonzero.

  For this, consider the subgroup $\pi' = \langle T, t^{p-1}\rangle$ of index
  $p-1$. This is isomorphic to the direct product $\mathbb{Z}/p\times\mathbb{Z}$, and so
  $H^2(\pi'; \mathbb{Z}/p) = \mathbb{Z}/p\oplus\mathbb{Z}/p$ by the K\"unneth theorem. There is a
  transfer map on group cohomology such that the composition
  \[
    H^2(\pi'; \mathbb{Z}/p) \xrightarrow{\tr} H^2(\pi; (\mathbb{Z}/p)^q) \to H^2(\pi'; \mathbb{Z}/p)
  \]
  is given by multiplication by $p-1$, and so the map $H^2(\pi; (\mathbb{Z}/p)^q)
  \to H^2(\pi'; \mathbb{Z}/p)$ is surjective. Again by the K\"unneth theorem the
  cup product $H^2(\pi'; \mathbb{Z}/p)\times H^2(\pi'; \mathbb{Z}/p) \to H^4(\pi'; \mathbb{Z}/p)$ is nonzero,
  and the proof is complete by naturality of the cup product.
\end{proof}

\section{The classification for non--stably free $\pi_2(M)$}
\label{sec2}

In this section we deal with the case when $c_{*}[M] = 0 \in H_4(\pi;
\mathbb{Z}^w)$ and $\pi_2(M)$ is stably isomorphic to the sum $A\oplus A'$. The main
theorem in this case will be shown by applying \cref{realization-cor}
to obtain $p$ many different homotopy types, and then showing that the
form $\lambda_M^{\mathbb{Z}/p}$ detects these. The first step is to calculate the
kernel of the map $\mathcal{B}_{\pi_2(M)}$.

\subsection{The kernel of $\mathcal{B}_{\pi_2(M)}$}

\begin{lemma}[cf.\ {\cite[Lemma~6.12]{KPR}}]
  \label{ker-B-stab-inv}
  The kernel of the map $\mathcal{B}_{\pi_2(M)}$ is isomorphic to the kernel of
  the map $\mathcal{B}_{A\oplus A'}$.
\end{lemma}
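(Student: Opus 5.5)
Since $\pi_2(M)$ is stably isomorphic to $A\oplus A'$ by \cref{stab-class}, there are integers $j,k\geq0$ and an isomorphism $\pi_2(M)\oplus\mathbb{Z}\pi^j\cong A\oplus A'\oplus\mathbb{Z}\pi^k$. I would reduce to showing that adding a single free summand does not change the kernel. That is, for a $\mathbb{Z}\pi$-module $P$ that is torsion-free as an abelian group, the inclusion $P\to P\oplus\mathbb{Z}\pi$ should induce an isomorphism $\ker\mathcal{B}_P\cong\ker\mathcal{B}_{P\oplus\mathbb{Z}\pi}$. Applying this repeatedly to both sides of the stable isomorphism gives $\ker\mathcal{B}_{\pi_2(M)}\cong\ker\mathcal{B}_{\pi_2(M)\oplus\mathbb{Z}\pi^j}\cong\ker\mathcal{B}_{A\oplus A'\oplus\mathbb{Z}\pi^k}\cong\ker\mathcal{B}_{A\oplus A'}$. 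The middle isomorphism is just naturality of $\mathcal{B}$ under module isomorphisms.

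For the one-step claim I would use \cref{dsum-rem} with $C=\mathbb{Z}\pi$. It splits $\mathcal{B}_{P\oplus\mathbb{Z}\pi}$ as a direct sum of three maps:
\begin{itemize}
\item $\mathcal{B}_P$;
\item $\mathcal{B}_{\mathbb{Z}\pi}\colon\mathbb{Z}^w\otimes_{\mathbb{Z}\pi}\Gamma(\mathbb{Z}\pi)\to\Her^w(\mathbb{Z}\pi^{*})$;
\item the mixed map $P\otimes_{\mathbb{Z}\pi}\mathbb{Z}\pi\to\Hom_{\mathbb{Z}\pi}((\mathbb{Z}\pi)^{*},P^{**})$, $a\otimes c\mapsto(f\mapsto(g\mapsto\ol{g(a)}f(c)))$.
\end{itemize}
The kernel of a direct sum of maps is the direct sum of the kernels. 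It therefore suffices to show that the second and third maps are injective.

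The map $\mathcal{B}_{\mathbb{Z}\pi}$ is injective by \cref{B-inj}. The hypothesis there holds because $\pi=\mathbb{Z}/p\rtimes\mathbb{Z}$ with $p$ odd has no elements of order $2$. For the mixed map, I would identify $P\otimes_{\mathbb{Z}\pi}\mathbb{Z}\pi\cong P$ and $\Hom_{\mathbb{Z}\pi}((\mathbb{Z}\pi)^{*},P^{**})\cong P^{**}$, using $(\mathbb{Z}\pi)^{*}\cong\mathbb{Z}\pi$ and evaluating at the generator. Under these identifications the mixed map should become $\ev_P\colon P\to P^{**}$, up to the involution conventions.

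So the one-step claim needs $P$ to be torsionless. This holds in every case that arises:
\begin{itemize}
\item $\pi_2(M)\oplus\mathbb{Z}\pi^j$ and $A\oplus A'\oplus\mathbb{Z}\pi^k$ are isomorphic, and the latter is reflexive by \cref{dualiso};
\item the intermediate modules $\pi_2(M)\oplus\mathbb{Z}\pi^i$ and $A\oplus A'\oplus\mathbb{Z}\pi^i$ are submodules of this reflexive module, hence torsionless.
\end{itemize}

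The main obstacle I expect is checking carefully that the mixed component really is $\ev_P$ under the stated identifications, with the $w$-twisted involutions handled correctly. If that identification is awkward, the fallback is to argue directly: an element $a\otimes1$ in the kernel satisfies $\ol{g(a)}=0$ for all $g\in P^{*}$, so $a\in\ker\ev_P=0$.
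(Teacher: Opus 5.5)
Your proposal is correct and is essentially the paper's proof. Both arguments split $\mathcal{B}$ of the stabilized module via \cref{dsum-rem}, use \cref{B-inj} for the free part, and use torsionlessness (from the reflexivity of $A$ and $A'$ in \cref{dualiso}) to see that the copies of $\ev$ are injective; the only difference is that you add one free summand at a time, whereas the paper splits off $\mathbb{Z}\pi^k$ (resp.\ $\mathbb{Z}\pi^j$) in one step and gets $k$ (resp.\ $j$) copies of $\ev$.
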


\begin{proof}
  Recall that $\pi_2(M)\oplus\mathbb{Z}\pi^j \cong A\oplus A'\oplus\mathbb{Z}\pi^k$. Call this module $L$. Both
  $A$ and $A'$ are reflexive, in particular torsionless, so they embed
  into a free module, and therefore so does $\pi_2(M)$.

  On one hand, by \cref{dsum-rem} we know $\mathcal{B}_L$ decomposes as the sum
  of the maps $\mathcal{B}_{A\oplus A'}$, $\mathcal{B}_{\mathbb{Z}\pi^k}$ and $k$ copies of $\ev_{A\oplus
    A'}$. The map $\mathcal{B}_L$ is injective by \cref{B-inj}, and $\ev_{A\oplus
    A'}$ is injective since $A\oplus A'$ is torsionless, so we have
  $\ker\mathcal{B}_L \cong \ker\mathcal{B}_{A\oplus A'}$. But the same argument also holds for the
  decomposition of $\mathcal{B}_L$ as the sum of $\mathcal{B}_{\pi_2(M)}$, $\mathcal{B}_{\mathbb{Z}\pi^j}$, and
  $\ev_{\pi_2(M)}$, so we are done.
\end{proof}

\begin{lemma}
  \label{rkzero}
  Let $J$ be either $A$ or $A'$. Then the kernel of the map $\mathcal{B}_J
  \colon \mathbb{Z}^w \otimes_{\mathbb{Z}\pi} \Gamma(J) \to \Her^w(J^{*})$ coincides with the torsion
  subgroup of $\mathbb{Z}^w \otimes_{\mathbb{Z}\pi} \Gamma(J)$.
\end{lemma}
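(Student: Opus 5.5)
Since $\Her^w(J^{*})$ is torsion free (\cref{tors-free}), every torsion element of $\mathbb{Z}^w\otimes_{\mathbb{Z}\pi}\Gamma(J)$ lies in $\ker\mathcal{B}_J$. So the content of the lemma is the reverse inclusion: $\ker\mathcal{B}_J$ is torsion. I would prove this by rationalising. It suffices to show that $\mathcal{B}_J\otimes\mathbb{Q}$ is injective. Then any $x\in\ker\mathcal{B}_J$ maps to $0$ in $(\mathbb{Z}^w\otimes_{\mathbb{Z}\pi}\Gamma(J))\otimes\mathbb{Q}$, and is therefore torsion.

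To get rational injectivity, I would embed $J$ in a free module and use the exact-sequence lemma for $\Gamma$. Each of $A$ and $A'$ is a left ideal of $\mathbb{Z}\pi$, so there is a short exact sequence $0\to J\to\mathbb{Z}\pi\to\mathbb{Z}\pi/J\to0$. The quotient is small: $\mathbb{Z}\pi/A'$ is $(\mathbb{Z})$ with $T$ trivial and $t$ acting by $q$, which is torsion free as an abelian group. For $\mathbb{Z}\pi/A$ I would check the analogous description directly. After tensoring with $\mathbb{Q}$, the ring $\mathbb{Q}\pi$ splits as $\mathbb{Q}\pi N\times\mathbb{Q}\pi(1-N/p)$, with $\mathbb{Q}\pi N\cong\mathbb{Q}[t^{\pm1}]$. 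I expect $J\otimes\mathbb{Q}$ to be projective over $\mathbb{Q}\pi$, because the relevant rings are hereditary: $\mathbb{Q}[t^{\pm1}]$ is a PID, and the other factor is a twisted Laurent ring over the field $\mathbb{Q}(\zeta_p)$, hence a principal ideal domain as well. If so, $J\otimes\mathbb{Q}$ is a direct summand of a free $\mathbb{Q}\pi$-module.

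The key formal point is that $\Gamma$ of a torsion-free group commutes with rationalisation: $\Gamma(J)\otimes\mathbb{Q}\cong\Gamma_\mathbb{Q}(J_\mathbb{Q})$ is the symmetric tensors. The map $\mathcal{B}$ is also natural and compatible with $\otimes\mathbb{Q}$, since $J$ is finitely generated and so $\Hom$ and duals commute with rationalisation. Therefore $\mathcal{B}_J\otimes\mathbb{Q}$ identifies with the rational version of $\mathcal{B}$ for the projective $\mathbb{Q}\pi$-module $J_\mathbb{Q}$. Hillman's argument (\cref{B-inj}) applies there; the hypothesis on elements of order $2$ holds because $p$ is odd, so $\pi$ has no $2$-torsion. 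Alternatively, one could write $J_\mathbb{Q}\oplus P\cong\mathbb{Q}\pi^n$, use \cref{dsum-rem} to see $\mathcal{B}_{J_\mathbb{Q}}$ as a summand of $\mathcal{B}_{\mathbb{Q}\pi^n}$, and apply \cref{Gamma-zpi-free} tensored with $\mathbb{Q}$ together with \cref{B-inj}.

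The main obstacle is making this rational reduction rigorous. Two points need care. First, \cref{B-inj} is stated over $\mathbb{Z}\pi$ for projective modules, so I must either rerun Hillman's proof over $\mathbb{Q}\pi$ or realise $J_\mathbb{Q}$ as $P\otimes\mathbb{Q}$ for an honest projective $\mathbb{Z}\pi$-module $P$ with $J\subseteq P$ of finite index up to torsion. Second, I must check that the localisation of the $\Her^w$ target behaves well. My first attempt would be the concrete route: find $J\hookrightarrow\mathbb{Z}\pi^n$ with cokernel whose rationalisation is projective. Then \cref{dsum-rem}, after tensoring with $\mathbb{Q}$, exhibits $\mathcal{B}_J\otimes\mathbb{Q}$ as a restriction of the injective map $\mathcal{B}_{\mathbb{Z}\pi^n}\otimes\mathbb{Q}$, and injectivity follows.
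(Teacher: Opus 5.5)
Your argument is correct, but it takes a different route from the paper.

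\textbf{The paper's route.} The paper keeps the ideal inclusion $J\subseteq\mathbb{Z}\pi$. Naturality of $\mathcal{B}$ and injectivity of $\mathcal{B}_{\mathbb{Z}\pi}$ reduce the lemma to showing that $\mathbb{Z}^w\otimes_{\mathbb{Z}\pi}\Gamma(J)\to\mathbb{Z}^w\otimes_{\mathbb{Z}\pi}\Gamma(\mathbb{Z}\pi)$ has torsion kernel. The paper identifies that kernel with $H_1(\pi;(\Gamma(\mathbb{Z}\pi)/\Gamma(J))^w)$. Via the Hochschild--Serre sequence for $\mathbb{Z}/p\to\pi\to\mathbb{Z}$, it then reduces to explicit case-by-case computations with the $\mathbb{Z}\mathbb{Z}$-module structures of $A$ and $A'$, including infinite matrices for $\Gamma$.

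\textbf{Your route.} You instead use the ring structure: $\mathbb{Q}\pi\cong\mathbb{Q}[t^{\pm1}]\times\mathbb{Q}(\zeta_p)[t^{\pm1};\sigma]$ is a product of principal ideal domains, so $J_\mathbb{Q}$ is projective. Rationalised \cref{B-inj} for free modules, together with \cref{dsum-rem}, then finishes the proof.

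\textbf{Comparison.} Your argument is shorter and treats $A$ and $A'$ uniformly. It also proves more: $\ker\mathcal{B}_L$ is torsion for every finitely generated submodule $L$ of a free $\mathbb{Z}\pi$-module. The paper's argument stays integral and needs no facts about skew Laurent rings, at the cost of explicit calculation.

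\textbf{Two small remarks.}

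First, $\mathbb{Z}\pi/A'\cong\mathbb{Z}[1/q]$, not $\mathbb{Z}$, because $t$ must act invertibly. This is harmless, since you never use it.

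Second, both of your ``points needing care'' disappear once you note that in fact $J_\mathbb{Q}\cong\mathbb{Q}\pi$. Take the central idempotent $e=N/p$.
\begin{itemize}
\item In one factor, $J$ contains a unit: $N=pe$ for $A$ in $\mathbb{Q}\pi e$, and $1-T$ for $A'$ in $\mathbb{Q}\pi(1-e)$.
\item In the other factor, $J$ is generated by the image of the remaining generator ($q-\ol t$, respectively $q-t$). That image is a nonzero element of a domain, so this factor of $J_\mathbb{Q}$ is free of rank one.
\end{itemize}
Clearing denominators gives $\psi\colon J\to\mathbb{Z}\pi$ with $\psi\otimes\mathbb{Q}$ an isomorphism. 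The naturality identity $\mathcal{B}_{\mathbb{Z}\pi}\circ(\id\otimes\Gamma(\psi))=(\psi^{*})^{*}\circ\mathcal{B}_J$, together with injectivity of $\mathcal{B}_{\mathbb{Z}\pi}$, then shows that $\mathcal{B}_J\otimes\mathbb{Q}$ is injective. This avoids rationalising $\Her^w$ and avoids rerunning Hillman's argument over $\mathbb{Q}\pi$.

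Be aware that the obvious inclusion $J\subseteq\mathbb{Z}\pi$ cannot serve as the embedding in your ``concrete route''. Its rational cokernel is a nonzero cyclic module over a domain factor, hence not projective, so the inclusion is not rationally split. This is why the paper, working with that inclusion, has to verify injectivity on coinvariants by hand.
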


\begin{proof}
  First note the commutative diagram
  \begin{equation*}
    \begin{tikzcd}
      \mathbb{Z}^w \otimes_{\mathbb{Z}\pi} \Gamma(J) \arrow[r, "\mathcal{B}_J"] \arrow[d]
      & \Her^w(J^{*}) \arrow[d] \\
      \mathbb{Z}^w \otimes_{\mathbb{Z}\pi} \Gamma(\mathbb{Z}\pi) \arrow[r, "\mathcal{B}_{\mathbb{Z}\pi}"]
      & \Her^w(\mathbb{Z}\pi^{*}),
    \end{tikzcd}
  \end{equation*}
  where the vertical maps are induced by the inclusion $J \to \mathbb{Z}\pi$ of the
  ideal $J$. The bottom map is injective by \cref{B-inj}, hence the
  kernel of $\mathcal{B}_J$ is contained in the kernel of the left map. Since
  $\Her^w(J^{*})$ is torsion-free, to prove the lemma it suffices to
  show that the kernel of the left map coincides with the torsion
  subgroup of $\mathbb{Z}^w\otimes_{\mathbb{Z}\pi}\Gamma(J)$.

  The kernel of that map is isomorphic to $H_1(\pi; (\Gamma(\mathbb{Z}\pi)/\Gamma(J))^w)$ by
  the $\Tor$ long exact sequence since $\Gamma(\mathbb{Z}\pi)$ is free by
  \cref{Gamma-zpi-free}. Also $\mathbb{Z}^w \otimes_{\mathbb{Z}\pi} \Gamma(\mathbb{Z}\pi)$ is torsion-free, so
  it is enough to show that $H_1(\pi; (\Gamma(\mathbb{Z}\pi)/\Gamma(J))^w)$ is torsion.

  Consider the Hochschild--Serre spectral sequence associated to the
  short exact sequence of groups $0 \to \mathbb{Z}/p \to \pi \to \mathbb{Z} \to 0$. The five-term
  exact sequence gives the following short exact sequence:
  \[
    H_1(\mathbb{Z}/p; \Gamma(\mathbb{Z}\pi)/\Gamma(J)) \to H_1(\pi; (\Gamma(\mathbb{Z}\pi)/\Gamma(J))^w) \to H_1(\mathbb{Z};
    (\Gamma(\mathbb{Z}\pi)/\Gamma(J))^w) \to 0.
  \]
  But the homology of $\mathbb{Z}/p$ is torsion, therefore it suffices to show
  that $H_1(\mathbb{Z}, (\Gamma(\mathbb{Z}\pi)/\Gamma(J))^w)$ is trivial, i.e.\ that the map
  $\mathbb{Z}^w\otimes_{\mathbb{Z}\mathbb{Z}}\Gamma(J) \to \mathbb{Z}^w\otimes_{\mathbb{Z}\mathbb{Z}}\Gamma(\mathbb{Z}\pi)$ is injective.

  For this, consider the cases for $J$ separately. First let $J =
  A'$. Then we have the short exact sequence
  \[
    0 \to A' \to \mathbb{Z}\pi \to \mathbb{Z}[1/q] \to 0.
  \]
  Considering the structure of the terms in this sequence as
  $\mathbb{Z}\mathbb{Z}$-modules, we see that the term on the right is isomorphic to
  $\mathbb{Z}\mathbb{Z}/\langle q-t\rangle$ and the middle term is isomorphic to $\mathbb{Z}\mathbb{Z}^p$, with the
  map a direct sum of $p-1$ zero maps and the obvious quotient
  map. Hence we have that $A' \cong_{\mathbb{Z}\mathbb{Z}} \langle q-t\rangle \oplus \mathbb{Z}\mathbb{Z}^{p-1}$.
  
  By \cref{dsum-rem}, the induced map on $\Gamma$ groups decomposes as the
  following sum:
  \begin{equation*}
    \begin{tikzcd}[column sep=small]
      \Gamma(\mathbb{Z}\mathbb{Z}^{p-1}) \arrow[r, phantom, "\oplus"] \arrow[d]
      & \Gamma(\langle q-t\rangle) \arrow[r, phantom, "\oplus"] \arrow[d]
      & \mathbb{Z}\mathbb{Z}^{p-1} \otimes_{\mathbb{Z}} \langle q-t\rangle \arrow[d] \\
      \Gamma(\mathbb{Z}\mathbb{Z}^{p-1}) \arrow[r, phantom, "\oplus"]
      & \Gamma(\mathbb{Z}\mathbb{Z}) \arrow[r, phantom, "\oplus"]
      & \mathbb{Z}\mathbb{Z}^{p-1} \otimes_{\mathbb{Z}} \mathbb{Z}\mathbb{Z}.
    \end{tikzcd}
  \end{equation*}
  The left and right maps are obviously injective after tensoring. It
  remains to show that the map $\mathbb{Z}^w \otimes_{\mathbb{Z}\mathbb{Z}} \Gamma(\langle q-t\rangle) \to \mathbb{Z}^w \otimes_{\mathbb{Z}\mathbb{Z}}
  \Gamma(\mathbb{Z}\mathbb{Z})$ induced by the inclusion is injective.

  The element $q-t \in \mathbb{Z}\mathbb{Z}$ is not a zero divisor, hence $\langle q-t\rangle$ is
  isomorphic to $\mathbb{Z}\mathbb{Z}$. By \cref{Gamma-zpi-free}, we know that $\Gamma(\mathbb{Z}\mathbb{Z})$
  is free with basis $\{1\otimes1, 1\otimes t +t\otimes1, \cdots\}$, and we can describe the
  map $\Gamma(\langle q-t\rangle) \to \Gamma(\mathbb{Z}\mathbb{Z})$ with the following infinite matrix with
  respect to this basis:
  \[
    \begin{pmatrix}
      q^2 + t & -2qt    &       &     & \\
           -q & q^2+t   & -qt   &     & \\
              & -q      & q^2+t & -qt & \\
              &         & \ddots     & \ddots   & \ddots
    \end{pmatrix}.
  \]
  Clearly, this is injective after tensoring, and the proof of this
  case is finished.

  Now let $J = A$. In this case, we have the short exact sequence
  \[
    0 \rightarrow A \rightarrow \mathbb{Z}\pi \rightarrow \mathbb{Z}\pi/A \rightarrow 0
  \]
  of $\mathbb{Z}\mathbb{Z}$-modules. Here the middle term is still $\mathbb{Z}\mathbb{Z}^p$, but the term
  on the right is isomorphic to $(\mathbb{Z}\mathbb{Z}/\langle q-\ol t\rangle)^{p-1}$, with the
  map a direct sum of $p-1$ quotient maps and one zero map. Hence we have
  that $A \cong_{\mathbb{Z}\mathbb{Z}} \langle q-\ol t\rangle^{p-1}\oplus\mathbb{Z}\mathbb{Z}$. As before, we show
  that the induced map $\mathbb{Z}^w \otimes_{\mathbb{Z}\mathbb{Z}} \Gamma(A) \to \mathbb{Z}^w \otimes_{\mathbb{Z}\mathbb{Z}} \Gamma(\mathbb{Z}\pi)$ is
  injective.

  The induced map on $\Gamma$ groups now has the form:
  \begin{equation*}
    \begin{tikzcd}[column sep=small]
      \Gamma(\langle q-\ol t\rangle^{p-1}) \arrow[r, phantom, "\oplus"] \arrow[d]
      & \Gamma(\mathbb{Z}\mathbb{Z}) \arrow[r, phantom, "\oplus"] \arrow[d]
      & \langle q-\ol t\rangle^{p-1} \otimes_{\mathbb{Z}} \mathbb{Z}\mathbb{Z} \arrow[d] \\
      \Gamma(\mathbb{Z}\mathbb{Z}^{p-1}) \arrow[r, phantom, "\oplus"]
      & \Gamma(\mathbb{Z}\mathbb{Z}) \arrow[r, phantom, "\oplus"]
      & \mathbb{Z}\mathbb{Z}^{p-1} \otimes_{\mathbb{Z}} \mathbb{Z}\mathbb{Z}.
    \end{tikzcd}
  \end{equation*}
  The second and third maps here are obviously injective after
  tensoring, so it remains to show the first map is injective after
  tensoring. The map $\langle q-\ol t\rangle\otimes_{\mathbb{Z}\mathbb{Z}}\langle q-\ol t\rangle \to
  \mathbb{Z}\mathbb{Z}\otimes_{\mathbb{Z}\mathbb{Z}}\mathbb{Z}\mathbb{Z}$ is injective because $\mathbb{Z}\mathbb{Z}$ has no zero divisors, so by
  induction it remains to show that $\mathbb{Z}^w\otimes_{\mathbb{Z}\mathbb{Z}}\Gamma(\langle
  q-\ol t\rangle)\to\mathbb{Z}^w\otimes_{\mathbb{Z}\mathbb{Z}}\Gamma(\mathbb{Z}\mathbb{Z})$ is injective. This works similarly to
  the case above.
\end{proof}

\begin{lemma}
  \label{weird-ideal-B-inj}
  The group $\mathbb{Z}^w \otimes_{\mathbb{Z}\pi} \Gamma(A')$ is torsion-free.
\end{lemma}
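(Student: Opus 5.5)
By \cref{rkzero} (applied with $J = A'$), the torsion subgroup of $\mathbb{Z}^w \otimes_{\mathbb{Z}\pi} \Gamma(A')$ is exactly the kernel of the map $\mathbb{Z}^w\otimes_{\mathbb{Z}\pi}\Gamma(A') \to \mathbb{Z}^w\otimes_{\mathbb{Z}\pi}\Gamma(\mathbb{Z}\pi)$ induced by the inclusion $A' \subseteq \mathbb{Z}\pi$. As in that proof, this kernel is $H_1(\pi; Q^w)$ with $Q = \Gamma(\mathbb{Z}\pi)/\Gamma(A')$, because $\Gamma(\mathbb{Z}\pi)$ is free by \cref{Gamma-zpi-free}. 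So the plan is to prove $H_1(\pi; Q^w) = 0$.

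\textbf{Step 1: the torsion is $p$-torsion.} I would first cut the problem down to $p$-torsion by a transfer argument. Write $X = \Gamma(A')$ and consider the infinite cyclic subgroup $\langle t\rangle$ of index $p$. Take the composite of the transfer $\mathbb{Z}^w\otimes_{\mathbb{Z}\pi}X \to \mathbb{Z}^w\otimes_{\mathbb{Z}\mathbb{Z}}X$ with the projection back. This composite is multiplication by $p$. The proof of \cref{rkzero} shows that $\mathbb{Z}^w\otimes_{\mathbb{Z}\mathbb{Z}}\Gamma(A')$ injects into the torsion-free group $\mathbb{Z}^w\otimes_{\mathbb{Z}\mathbb{Z}}\Gamma(\mathbb{Z}\pi)$, so it is torsion-free. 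Hence any torsion element of $\mathbb{Z}^w\otimes_{\mathbb{Z}\pi}X$ has zero transfer and is therefore killed by $p$. It then suffices to show that $H_1(\pi;Q^w)$ has no $p$-torsion, or simply that it vanishes.

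\textbf{Step 2: filter $Q$.} Next I would use \cref{exact-seqs} for the sequence $0\to A'\to\mathbb{Z}\pi\to C\to0$. Here $C = \mathbb{Z}\pi/A' \cong \mathbb{Z}[1/q]$, with $T$ acting trivially and $t$ acting by $q$. That lemma gives a short exact sequence $0 \to A'\otimes_{\mathbb{Z}} C \to Q \to \Gamma(C) \to 0$. Since $C$ has rank one and is torsion-free, $\Gamma(C)\cong C\otimes C \cong \mathbb{Z}[1/q]$ with $T$ trivial and $t$ acting by $q^2$. From the long exact sequence it is enough to show two things: $H_1(\pi; \Gamma(C)^w) = 0$, and the image of $H_1(\pi;(A'\otimes C)^w)$ in $H_1(\pi;Q^w)$ vanishes.

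For the second, tensor $0\to A'\to\mathbb{Z}\pi\to C\to 0$ with $C$. The middle term $\mathbb{Z}\pi\otimes C$ is induced, so it has no higher homology. This gives $H_1(\pi;(A'\otimes C)^w)\cong H_2(\pi;(C\otimes C)^w)$, and the connecting maps can be compared with those of the sequence defining $Q$.

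\textbf{Step 3: Hochschild--Serre computations.} For the homology of the rank-one modules $\mathbb{Z}[1/q]$ twisted by $w\cdot q^k$, I would use the Hochschild--Serre spectral sequence for $\mathbb{Z}/p\to\pi\to\mathbb{Z}$. The groups $H_i(\mathbb{Z}/p;\mathbb{Z}[1/q])$ are $\mathbb{Z}[1/q]$ for $i=0$, $\mathbb{Z}/p$ for odd $i$, and $0$ for even $i>0$. On these, $t$ acts by a power of $q$ (the power coming from conjugation on $\mathbb{Z}/p$) multiplied by the twist. The $E^2_{1,0}$ and $E^2_{0,1}$ terms are then kernels and cokernels of $(\text{unit}-1)$ on $\mathbb{Z}[1/q]$ or on $\mathbb{Z}/p$. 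These vanish unless the relevant unit is $1$, respectively $\equiv1 \bmod p$.

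\textbf{Main obstacle.} I expect the main difficulty to be the degenerate cases where these congruences hold, such as $w(t)q^{\pm1}\equiv 1$ or $q^2 w(t) \equiv 1 \bmod p$ (compare \cref{hfour}). In these cases the $E^2$ terms do not vanish individually. There I would not stop at the filtration. Instead I would check directly that the surviving $\mathbb{Z}/p$ classes are hit by the connecting map from $H_2(\pi;\Gamma(C)^w)$, or, equivalently, that they come from $H_1(\pi;\Gamma(\mathbb{Z}\pi)^w)=0$. To do this I would write explicit cycles using the basis of \cref{exact-seqs}, with $(q-t)\sq(1-T)$ and $(1-T)\otimes(1-T)$ as the candidate generators, and the resolution from \cref{stab-class}.
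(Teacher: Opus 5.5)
Your reduction is sound: the torsion of $\mathbb{Z}^w\otimes_{\mathbb{Z}\pi}\Gamma(A')$ is $H_1(\pi;Q^w)$. Your Step~1 is the same index-$p$ transfer the paper uses, so this group is killed by $p$. After that, however, the proposal is a plan rather than a proof. The two sufficient conditions in Step~2 fail in exactly the cases you set aside as the ``main obstacle''. For instance, when $q=1$ and $w$ is trivial you have $C=\mathbb{Z}$. Then $H_1(\pi;\Gamma(C)^w)=H_1(\pi;\mathbb{Z})\cong\mathbb{Z}\oplus\mathbb{Z}/p$ and $H_1(\pi;(A'\otimes C)^w)\cong H_2(\pi;\mathbb{Z})\cong\mathbb{Z}/p$, and both are nonzero. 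In that case the lemma is exactly the torsion-freeness of $\mathbb{Z}\otimes_{\mathbb{Z}\pi}\Gamma(I)$ for the augmentation ideal $I$ of $\mathbb{Z}/p\times\mathbb{Z}$. Saying you would write explicit cycles does not show that the potential $\mathbb{Z}/p$ dies, so the essential step is missing.

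The paper closes this gap with a second transfer. It uses $\pi'=\langle T,t^{p-1}\rangle\cong\mathbb{Z}/p\times\mathbb{Z}$, which has index $p-1$ and on which $w$ is trivial. Over $\pi'$ the module $A'$ is stably isomorphic to the augmentation ideal, seen as $\pi_2$ of the corresponding cover of a presentation complex. Then $\mathbb{Z}\otimes_{\mathbb{Z}\pi'}\Gamma$ of that ideal is torsion-free by \cite[Corollary~6.7]{KPR}. So the torsion is also killed by $p-1$, and hence is zero.

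Your own setup gives a shorter fix, one step from where you stopped. You already noted $\Gamma(C)=C\otimes C$, because $C\cong\mathbb{Z}[1/q]$ has rank one. Consider the map $x\otimes y\mapsto x\otimes\ol{y}$ from $\Gamma(\mathbb{Z}\pi)\subseteq\mathbb{Z}\pi\otimes\mathbb{Z}\pi$ to $\mathbb{Z}\pi\otimes C$; it kills $\Gamma(A')$. It therefore induces a map from $0\to A'\otimes C\to Q\to\Gamma(C)\to 0$ to $0\to A'\otimes C\to\mathbb{Z}\pi\otimes C\to C\otimes C\to 0$. On the left this map is the identity, since $a\sq\tilde c\mapsto a\otimes c$. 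On the right it is the canonical isomorphism $\Gamma(C)\to C\otimes C$. By the five lemma $Q\cong\mathbb{Z}\pi\otimes C$, which is induced, so $H_1(\pi;Q^w)=0$ outright. This route needs no case analysis and does not even need Step~1.
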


\begin{proof}
  Let $K$ be a presentation complex for the group $\pi$. Then we have
  that $\pi_2(K)$ is stably isomorphic to $A'$. Consider the $\mathbb{Z}$
  subgroup of $\pi$ of index $p$. There is a transfer map on group
  homology such that the composition
  \[
    \mathbb{Z}^w\otimes_{\mathbb{Z}\pi}\Gamma(A') \xrightarrow{\tr} \mathbb{Z}^w\otimes_{\mathbb{Z}\mathbb{Z}}\Gamma(A') \to \mathbb{Z}^w\otimes_{\mathbb{Z}\pi}\Gamma(A')
  \]
  is given by multiplication by $p$. As a $\mathbb{Z}\mathbb{Z}$-module $A'$ is stably
  isomorphic to $\pi_2(\wt K)$, where $\wt K$ is the
  $p$-sheeted cover of $K$ corresponding to the $\mathbb{Z}$
  subgroup. However $\wt K$ is still a finite $2$-complex,
  hence by \cite[Lemma~1.8]{KPT} $\pi_2(\wt K)$ is stably
  isomorphic to the augmentation ideal in $\mathbb{Z}\mathbb{Z}$, which is free, i.e.\
  $A'$ is stably free. Thus $\mathbb{Z}^w\otimes_{\mathbb{Z}\mathbb{Z}}\Gamma(A')$ is a direct summand of
  $\mathbb{Z}^w\otimes_{\mathbb{Z}\mathbb{Z}}\Gamma(\mathbb{Z}\mathbb{Z}^k)$, and is therefore torsion-free by
  \cref{tors-free}. But now any torsion element in $\mathbb{Z}^w\otimes_{\mathbb{Z}\pi}\Gamma(A')$
  must vanish under the composition above. Since $p$ is prime, this
  means that $\mathbb{Z}^w\otimes_{\mathbb{Z}\pi}\Gamma(A')$ can only contain $p$-torsion.

  Similarly, consider the subgroup $\pi' = \langle T, t^{p-1}\rangle$ of index
  $p-1$. Note that this is isomorphic to the direct product $\mathbb{Z}/p \times
  \mathbb{Z}$. Hence the cover $\ol K$ of $K$ corresponding to this
  subgroup is still a finite $2$-complex and so has $\pi_2(\ol K)$
  stably isomorphic to $L$ the augmentation ideal in $\pi'$ (since the
  ideal $A'$ is precisely the augmentation ideal whenever $q=1$). As
  above, $\mathbb{Z}^w\otimes_{\mathbb{Z}\pi'}\Gamma(A')$ is a direct summand of
  $\mathbb{Z}^w\otimes_{\mathbb{Z}\pi'}\Gamma(L\oplus\mathbb{Z}\pi'^k)$. The latter decomposes as the direct sum of a
  free abelian group and $\mathbb{Z}^w\otimes_{\mathbb{Z}\pi'}\Gamma(L)$. This is torsion-free as it
  injects into $\Her^w(L^{*})$ by \cite[Corollary~6.7]{KPR}. Again
  considering the composition with the transfer map, we obtain that
  any torsion in $\mathbb{Z}^w\otimes_{\mathbb{Z}\pi}\Gamma(A')$, which we before showed must be
  $p$-torsion, must be annihilated by $p-1$, which is
  absurd. Therefore, $\mathbb{Z}^w\otimes_{\mathbb{Z}\pi}\Gamma(A')$ has no torsion.
\end{proof}

We wish to use a similar argument involving the transfer map to
compute the torsion in $\mathbb{Z}^w\otimes_{\mathbb{Z}\pi}\Gamma(A)$, but first we need to know the
torsion in the case when $q=1$.

Recall that we use $a\sq b$ as shorthand for $a\otimes b + b\otimes a$.

\begin{lemma}
  \label{torsfin-dir}
  Suppose $q = 1$, so that $\pi$ is the direct product $\mathbb{Z}/p \times \mathbb{Z}$. If $w$
  is nontrivial, the group $\mathbb{Z}^w\otimes_{\mathbb{Z}\pi}\Gamma(A)$ is torsion-free; otherwise,
  its torsion subgroup is isomorphic to $\mathbb{Z}/p$ and is generated by the
  image of the element
  \[
    \rho = N\sq(1-\ol t) - (1-\ol t)\otimes(1-\ol t) -
    (1-\ol t)\sq T(1-\ol t) - \cdots - (1-\ol t)\sq
    T^{\frac{p-1}{2}}(1-\ol t) \in \Gamma(A).
  \]
\end{lemma}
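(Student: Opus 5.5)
Throughout, $q=1$, so $\pi=\mathbb{Z}/p\times\mathbb{Z}$ and $A=\langle 1-\ol t, N\rangle$. Since $\pi$ is abelian, $\ol t = w(t)t^{-1}$.

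The first step is to write $A$ as an extension of simpler modules. The element $N$ generates the submodule $N\mathbb{Z}\pi\cong\mathbb{Z}[\mathbb{Z}]$, with $T$ acting trivially. The quotient $A/N\mathbb{Z}\pi$ is generated by $1-\ol t$. Modulo $N$ the ring is $\mathbb{Z}[\zeta_p][t^{\pm1}]$, which is a domain, and $1-\ol t$ is a nonzerodivisor there, so the quotient is free of rank one over $R=\mathbb{Z}\pi/N$. This gives a short exact sequence
\[
0\to \mathbb{Z}\pi N\to A\to R(1-\ol t)\to 0 .
\]
Applying \cref{exact-seqs}, $\Gamma(A)$ is filtered with graded pieces $\Gamma(\mathbb{Z}\pi N)$, $\mathbb{Z}\pi N\otimes_{\mathbb{Z}} R(1-\ol t)$ and $\Gamma(R(1-\ol t))$. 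I then compute $\mathbb{Z}^w\otimes_{\mathbb{Z}\pi}$ of each piece, i.e.\ the coinvariants, using the Lyndon--Hochschild--Serre reduction to the $\mathbb{Z}/p$ factor followed by the $\mathbb{Z}$ factor. The point of choosing these pieces is that each is induced up from $\mathbb{Z}/p$-modules along $\mathbb{Z}$ (with $t$ permuting bases freely). Hence the $\mathbb{Z}$-direction contributes no $H_1$, and the torsion of the coinvariants comes only from $\mathbb{Z}/p$-coinvariants of $\Gamma(R)$-type pieces.

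The torsion arises in $\Gamma(\mathbb{Z}[\zeta_p])$-type coinvariants. The diagonal terms $x\otimes x$ and the orbit pairs $x\sq T^ix$ under $T$ behave like the classical computation $H_0(\mathbb{Z}/p;\Gamma(I))$, with $I$ the augmentation ideal of $\mathbb{Z}[\mathbb{Z}/p]$. Rather than recompute this by hand, I would compare directly. Consider the map $\mathbb{Z}^w\otimes_{\mathbb{Z}\pi}\Gamma(A)\to\mathbb{Z}^w\otimes_{\mathbb{Z}\pi}\Gamma(\mathbb{Z}\pi)$. As in the proof of \cref{rkzero}, its kernel is $H_1(\pi;(\Gamma(\mathbb{Z}\pi)/\Gamma(A))^w)$, and this kernel is the full torsion subgroup. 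By the five-term sequence used there, together with the $\mathbb{Z}$-injectivity shown in \cref{rkzero}, this equals the $\mathbb{Z}$-coinvariants of $H_1(\mathbb{Z}/p;\Gamma(\mathbb{Z}\pi)/\Gamma(A))$, twisted by $w$.

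The computation then reduces to $\mathbb{Z}/p$-homology of $\Gamma(\mathbb{Z}\pi)/\Gamma(A)$. Lemma~\ref{exact-seqs}, applied to $A\to\mathbb{Z}\pi\to\mathbb{Z}\pi/A$ with $\mathbb{Z}\pi/A\cong\mathbb{Z}[\mathbb{Z}]/(1-\ol t)\otimes\mathbb{Z}[\mathbb{Z}/p]/N$, describes this quotient explicitly. Its $H_1(\mathbb{Z}/p;-)$ should be a single copy of $\mathbb{Z}/p$ for each $t$-orbit. The element $t$ then acts on this family by $w(t)$ times a sign coming from the symmetric square. So the $\mathbb{Z}$-coinvariants are $\mathbb{Z}/p$ when $w$ is trivial and vanish when $w(t)=-1$, because $2$ is invertible mod $p$. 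I expect this bookkeeping, in particular tracking how $t$ acts on the $\mathbb{Z}/p$ and confirming that the twisted action kills it, to be the main obstacle.

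It remains to identify the generator. For $w$ trivial I check two things about $\rho$.

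1. $\rho\in\Gamma(A)$ and $\rho$ maps to zero in $\mathbb{Z}\otimes_{\mathbb{Z}\pi}\Gamma(\mathbb{Z}\pi)$. Expand in the basis of \cref{Gamma-zpi-free}, setting $t\mapsto1$ after the coinvariant identification. The sum $(1-\ol t)\otimes(1-\ol t)+\sum_{i=1}^{(p-1)/2}(1-\ol t)\sq T^i(1-\ol t)$ equals $N(1-\ol t)\otimes(1-\ol t)$ symmetrized, i.e.\ $\tfrac12\bigl((N(1-\ol t))\sq(1-\ol t)\bigr)$ up to coinvariants. This cancels against $N\sq(1-\ol t)$ once one uses $N(1-\ol t)\equiv N - N\ol t$.

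2. $\rho$ is not itself zero in $\mathbb{Z}\otimes_{\mathbb{Z}\pi}\Gamma(A)$. For this I evaluate on a homomorphism to $\mathbb{Z}/p$, using the quadratic map $a\otimes a\mapsto \alpha(a)^2$ in the spirit of \cref{whitehead-eval}, with $\alpha\colon A\to\mathbb{Z}/p$ sending $N\mapsto 1$ and $1-\ol t\mapsto0$. Its polarization $\alpha(a)\alpha(b)$ gives $\rho\mapsto 2\cdot1\cdot0$, which is useless. So I would instead use $\alpha\colon A\to\mathbb{Z}/p$ defined by $N\mapsto0$ and $(1-\ol t)\mapsto 1$, combined with a non-$\Gamma$-symmetric functional that sees the $(p-1)/2$ count. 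The value $1+2\cdot\frac{p-1}{2}\equiv 0$, which suggests the correct invariant is the one obtained by comparing with the $N\sq(1-\ol t)$ term. I would settle this detection by direct computation in the quotient identified in the previous step.
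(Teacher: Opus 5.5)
\textbf{There is a genuine gap: your reduction step gives the wrong answer.} You identify the kernel $H_1(\pi;Q^w)$, where $Q=\Gamma(\mathbb{Z}\pi)/\Gamma(A)$, with the $w$-twisted $\mathbb{Z}$-coinvariants of $H_1(\mathbb{Z}/p;Q)$.

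The Lyndon--Hochschild--Serre sequence for $\mathbb{Z}/p\to\pi\to\mathbb{Z}$ says something else. It exhibits $H_1(\pi;Q^w)$ as an extension of $H_1(\mathbb{Z};H_0(\mathbb{Z}/p;Q)^w)$ by $H_0(\mathbb{Z};H_1(\mathbb{Z}/p;Q)^w)$. The injectivity proved in \cref{rkzero} concerns $Q$ restricted to the subgroup $\langle t\rangle$. That is enough there to show the kernel is torsion, but it does not kill $H_1(\mathbb{Z};H_0(\mathbb{Z}/p;Q)^w)$.

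In fact $H_1(\mathbb{Z}/p;Q)=0$:
\begin{itemize}
\item With $C=\mathbb{Z}\pi/A\cong\mathbb{Z}[\zeta_p]$ (where $t$ acts by $w(t)$), \cref{exact-seqs} gives $0\to A\otimes C\to Q\to\Gamma(C)\to0$.
\item Over $\mathbb{Z}[\mathbb{Z}/p]$ the extension $0\to\mathbb{Z}\pi\to A\to\mathbb{Z}\to0$ splits, so $A\otimes C\cong\mathbb{Z}[\zeta_p]\oplus(\text{free})$.
\item Both $H_1(\mathbb{Z}/p;\mathbb{Z}[\zeta_p])$ and $H_1(\mathbb{Z}/p;\Gamma(\mathbb{Z}[\zeta_p]))$ vanish. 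For the latter, use $0\to\mathbb{Z}\xrightarrow{N}\mathbb{Z}[\mathbb{Z}/p]\to\mathbb{Z}[\zeta_p]\to0$, \cref{exact-seqs}, and freeness of $\Gamma(\mathbb{Z}[\mathbb{Z}/p])$.
\end{itemize}
So your formula yields zero torsion for every $w$, contradicting the order-$p$ element $\rho$ when $w$ is trivial. The predicted ``one $\mathbb{Z}/p$ per orbit, with $t$ acting by a sign'' does not exist. Because $\pi$ is a direct product, you could take $\langle t\rangle$ as the normal subgroup instead, giving $H_1(\pi;Q^w)\cong H_1(\mathbb{Z}/p;H_0(\langle t\rangle;Q^w))$. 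But that is precisely the computation you left open, and none of your sign bookkeeping carries over.

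\textbf{The identification of the generator is also incomplete.} Step~1 is correct in substance, and together with \cref{rkzero} it shows $\rho$ is torsion. But you never show $\rho\neq0$. As you found, quadratic evaluations $a\otimes a\mapsto\alpha(a)^2$ kill $\rho$. What works is the bilinear evaluation $a\otimes b\mapsto\alpha(a)\beta(b)$ on $\Gamma(A)\subseteq A\otimes A$, with two different $\pi$-maps $\alpha,\beta\colon A\to\mathbb{Z}/p$ defined by $\alpha(N)=1$, $\alpha(1-\ol t)=0$, $\beta(N)=0$, $\beta(1-\ol t)=1$. This evaluation is $\pi$-invariant, so it factors through $\mathbb{Z}\otimes_{\mathbb{Z}\pi}\Gamma(A)$, and it sends $\rho\mapsto1$. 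This is exactly what the paper does in \cref{mainall}.

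\textbf{How the paper avoids the spectral-sequence bookkeeping.} It uses the extension $0\to\mathbb{Z}\pi\xrightarrow{1-\ol t}A\to\mathbb{Z}\to0$, so that \cref{exact-seqs} gives $0\to\Gamma(\mathbb{Z}\pi)\oplus\mathbb{Z}\pi\xrightarrow{f}\Gamma(A)\to\mathbb{Z}\to0$. Tensoring, the torsion in $\mathbb{Z}^w\otimes_{\mathbb{Z}\pi}\Gamma(A)$ is controlled by the connecting map $\delta\colon H_1(\pi;\mathbb{Z}^w)\to(\mathbb{Z}^w\otimes_{\mathbb{Z}\pi}\Gamma(\mathbb{Z}\pi))\oplus\mathbb{Z}^w$.
\begin{itemize}
\item For $w$ nontrivial, $H_1(\pi;\mathbb{Z}^w)=0$, so only $2$-torsion from the cokernel $\mathbb{Z}/2$ could occur, and a transfer argument excludes it.
\item For $w$ trivial, a snake-lemma computation gives $\delta(t)=p\cdot(-1\otimes u,1)$. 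Hence the torsion is $\mathbb{Z}/p$, generated by $f(-u,1)=\rho$.
\end{itemize}
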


\begin{proof}
  Start with the short exact sequence of left $\mathbb{Z}\pi$-modules
  \[
    0 \to \mathbb{Z}\pi \to A \to \mathbb{Z} \to 0
  \]
  where the first map is given by $1 \mapsto 1-\ol t$ and $1 \in \mathbb{Z}$ lifts
  to $N \in A$.

  From this exact sequence, by \cref{exact-seqs} we get the
  $\Gamma$-functor short exact sequences
  \[
    0 \to \Gamma(\mathbb{Z}\pi) \to K \to \mathbb{Z}\pi \otimes \mathbb{Z} \to 0
  \]
  \[
    0 \to K \xrightarrow{f} \Gamma(A) \to \Gamma(\mathbb{Z}) \cong \mathbb{Z} \to 0.
  \]
  The first of these splits, hence $K \cong \Gamma(\mathbb{Z}\pi) \oplus \mathbb{Z}\pi$. Note that the map
  $f$ is given by $1 \mapsto (1-\ol t) \sq N$ on the $\mathbb{Z}\pi$
  summand.

  Tensoring the second $\Gamma$-functor short exact sequence with $\mathbb{Z}^w$
  over $\mathbb{Z}\pi$, we obtain the long exact sequence
  \[
    \cdots \to H_1(\pi; \mathbb{Z}^w) \xrightarrow{\delta} (\mathbb{Z}^w \otimes_{\mathbb{Z}\pi} \Gamma(\mathbb{Z}\pi)) \oplus \mathbb{Z}^w
    \xrightarrow{\id \otimes f} \mathbb{Z}^w \otimes_{\mathbb{Z}\pi} \Gamma(A) \to \mathbb{Z}^w \otimes_{\mathbb{Z}\pi} \mathbb{Z} \to 0, 
  \]
  where the first term comes from the isomorphism $\Tor_1^{\mathbb{Z}\pi}(\mathbb{Z}^w, \mathbb{Z})
  \cong H_1(\pi; \mathbb{Z}^w)$.

  We deal first with the case where $w$ is nontrivial. In this case
  $H_1(\pi; \mathbb{Z}^w) = 0$ and $\mathbb{Z}^w \otimes_{\mathbb{Z}\pi} \mathbb{Z} = \mathbb{Z}/2$. Since $\mathbb{Z}^w \otimes_{\mathbb{Z}\pi} \Gamma(\mathbb{Z}\pi) \oplus \mathbb{Z}^w$
  is torsion-free, we obtain that $\mathbb{Z}^w \otimes_{\mathbb{Z}\pi} \Gamma(A)$ can only
  have $2$-torsion. But by an argument involving the transfer map
  similar to the one in the proof of \cref{weird-ideal-B-inj} there
  can only be $p$-torsion, hence in this case there is actually no
  torsion.

  If $w$ is trivial, we have the sequence
  \[
    \cdots \to \pi \xrightarrow{\delta} (\mathbb{Z} \otimes_{\mathbb{Z}\pi} \Gamma(\mathbb{Z}\pi)) \oplus \mathbb{Z}
    \xrightarrow{\id \otimes f} \mathbb{Z} \otimes_{\mathbb{Z}\pi} \Gamma(A) \to \mathbb{Z} \to 0.
  \]
  Because of the $\mathbb{Z}$ term at the end of the sequence, all of the
  torsion in $\mathbb{Z}\otimes_{\mathbb{Z}\pi}\Gamma(A)$ is contained in the image of $\id
  \otimes_{\mathbb{Z}\pi} f$. By \cref{Gamma-zpi-free} we know that $\Gamma(\mathbb{Z}\pi)$ is a free
  $\mathbb{Z}\pi$-module, so $\mathbb{Z} \otimes_{\mathbb{Z}\pi} \Gamma(\mathbb{Z}\pi) \oplus \mathbb{Z}$ is torsion-free (see
  \cref{tors-free}). Hence, all of the torsion in $\mathbb{Z} \otimes_{\mathbb{Z}\pi} \Gamma(A)$
  must come from the kernel of $\id \otimes_{\mathbb{Z}\pi} f$, i.e.\ the image of
  $\delta$. Thus, we now calculate $\delta$.

  The map $\delta$ is the connecting map in the $\Tor$ long exact sequence,
  i.e.\ it is given by taking a resolution of the trivial right module
  $\mathbb{Z}$ and tensoring with each of the modules in our short exact
  sequence, then applying the snake lemma. We use the resolution of
  $\mathbb{Z}$ obtained from the one in the proof of \cref{hfour} by replacing
  $t$ with $t^{-1}$ everywhere. Since $w$ is trivial, $\ol t =
  t^{-1}$ and we will write $\ol t$ for clarity.
  
  Note that $\delta$ must be trivial on $\mathbb{Z}/p \leq \pi$, since $\mathbb{Z} \otimes_{\mathbb{Z}\pi} \Gamma(\mathbb{Z}\pi) \oplus
  \mathbb{Z}$ is torsion-free, so we will restrict the map $\partial_1$ to the summand
  of $C_1$ corresponding to the $\mathbb{Z}$ summand of $\pi$. We have the
  commutative diagram
  \begin{equation*}
    \begin{tikzcd}
      \Gamma(\mathbb{Z}\pi) \oplus \mathbb{Z}\pi \arrow[r, "f"] \arrow[d, "1-\ol t"]
      & \Gamma(A) \arrow[r] \arrow[d, "1-\ol t"]
      & \mathbb{Z} \arrow[d, "0"] \\
      \Gamma(\mathbb{Z}\pi) \oplus \mathbb{Z}\pi \arrow[r, "f"]
      & \Gamma(A) \arrow[r]
      & \mathbb{Z}.
    \end{tikzcd}
  \end{equation*}
  The element $1 \in \mathbb{Z}$ lifts to $N \otimes N \in \Gamma(A)$ on the top row,
  which then descends to
  \begin{align*}
    (1-\ol t)(N \otimes N) & = N \otimes N - \ol tN \otimes\ol tN \\
                          & = N \otimes N
                            -(1-(1-\ol t))N\otimes(1-(1-\ol t))N
                            \\ 
                          & = -(1-\ol t)N\otimes(1-\ol t)N
                            +(1-\ol t)N \sq N 
  \end{align*}
  in $\Gamma(A)$ on the bottom row. Now, after noting that $N$ commutes
  with $\ol t$, we can see that the RHS is equal to the image of
  $(-N \otimes N, N) \in \Gamma(\mathbb{Z}\pi) \oplus \mathbb{Z}\pi$ under $f$. By \cref{Gamma-zpi-free} we
  can express $N \otimes N \in \Gamma(\mathbb{Z}\pi)$ in terms of basis elements as follows:
  \begin{align*}
    N \otimes N & = N(1\otimes1) + N(1 \sq T) + \cdots + N\left(1 \sq
            T^{\frac{p-1}{2}}\right) \\
          & = Nu,
  \end{align*}
  where $u \coloneqq 1\otimes1 + 1 \sq T + \cdots + 1 \sq T^{\frac{p-1}{2}}$. This
  shows that after tensoring $(-N \otimes N, N) \in \Gamma(\mathbb{Z}\pi) \oplus \mathbb{Z}\pi$ becomes $(-1 \otimes
  pu, p) \in \mathbb{Z} \otimes_{\mathbb{Z}\pi} \Gamma(\mathbb{Z}\pi) \oplus \mathbb{Z}$. Hence $(-1 \otimes u, 1)$ maps to an element
  of order $p$ in $\mathbb{Z} \otimes_{\mathbb{Z}\pi} \Gamma(A)$. This completes the proof of the
  lemma.
\end{proof}

\begin{lemma}
  \label{torsfin}
  Suppose $q\neq1$. If $q^2 \not\equiv w(t) \mod p$, the group $\mathbb{Z}^w\otimes_{\mathbb{Z}\pi}\Gamma(A)$
  is torsion-free; otherwise, its torsion subgroup is isomorphic to
  $\mathbb{Z}/p$ and is generated by the image of the element
  \[
    \rho = N\sq E - E\otimes E - E\sq TE - \cdots - E\sq T^{\frac{p-1}{2}}E \in \Gamma(A),
  \]
  where $E = \left((\sum_{i=0}^{p-2}q^{p-2-i}\ol t^i)(q-\ol
  t) + \frac{1-q^{p-1}}{p}N\right)$.
\end{lemma}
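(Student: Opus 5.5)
We will mimic the proof of \cref{torsfin-dir}, replacing $1-\ol t$ by the element $E$. The first step is to check that $E\in A$ generates a free submodule with $A/\mathbb{Z}\pi E\cong\mathbb{Z}$ in a suitable twisted form. Note that $E\equiv(q^{p-1}-\ol t^{p-1})\cdot(\text{unit})$ modulo $N$-terms; more precisely $(\sum q^{p-2-i}\ol t^i)(q-\ol t)=q^{p-1}-\ol t^{p-1}$, so $E=q^{p-1}-\ol t^{p-1}+\frac{1-q^{p-1}}{p}N$. Since $q^{p-1}\equiv1\bmod p$, the coefficient is an integer. Also $\ol t^{p-1}$ acts on $T$ trivially, so $E$ is central in the subring generated by $T$ and $\ol t^{p-1}$.

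We then aim for a short exact sequence of left $\mathbb{Z}\pi$-modules $0\to\mathbb{Z}\pi\xrightarrow{\cdot E}A\to Q\to0$ with $1\in Q$ lifting to $N$. Here $Q$ should be $\mathbb{Z}$ with $t$ acting by $q$ (twisted appropriately by the involution), i.e.\ $Q\cong\mathbb{Z}[1/q]$-like quotient reduced to something of rank one; checking exactness (injectivity since $\mathbb{Z}\pi$ has no zero divisors of this form, and the quotient computation using $A=\langle q-\ol t,N\rangle$) is the first routine step. Applying \cref{exact-seqs} gives $0\to\Gamma(\mathbb{Z}\pi)\oplus(\mathbb{Z}\pi\otimes Q)\to\Gamma(A)\to\Gamma(Q)\to0$, with $f(1)=E\sq N$ on the second summand. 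Tensoring with $\mathbb{Z}^w$ yields a long exact sequence with connecting map $\delta\colon\Tor_1^{\mathbb{Z}\pi}(\mathbb{Z}^w,\Gamma(Q))\to\mathbb{Z}^w\otimes_{\mathbb{Z}\pi}\Gamma(\mathbb{Z}\pi)\oplus\mathbb{Z}^w\otimes_{\mathbb{Z}\pi}(\mathbb{Z}\pi\otimes Q)$. Here $\Gamma(Q)$ is $\mathbb{Z}$ with $t$ acting by $q^2$, so $\Tor_1=H_1(\pi;\mathbb{Z}^{wq^2})$ and $\mathbb{Z}^w\otimes_{\mathbb{Z}\pi}\Gamma(Q)$ are computed from the resolution in \cref{hfour}. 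When $q^2\not\equiv w(t)$ the latter is finite of order prime to $p$ and $H_1$'s relevant part vanishes, so torsion can only be prime to $p$; combined with a transfer argument to $\pi'=\langle T,t^{p-1}\rangle$ (as in \cref{weird-ideal-B-inj}) forcing torsion to be $p$-torsion, we get torsion-freeness. This transfer argument also reduces the general case to $p$-torsion, and restricting to $\pi'\cong\mathbb{Z}/p\times\mathbb{Z}$ with \cref{torsfin-dir} bounds the $p$-torsion by $\mathbb{Z}/p$ (transfer composed with restriction is multiplication by $p-1$, invertible on $p$-torsion, so $p$-torsion injects).

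In the case $q^2\equiv w(t)$, $\mathbb{Z}^w\otimes\Gamma(Q)$ is $\mathbb{Z}$ or $\mathbb{Z}/2$ and torsion comes from $\im\delta$ modulo $2$-torsion. We compute $\delta$ as in \cref{torsfin-dir}: lift $1$ to $N\otimes N$, apply the resolution differential involving $t$ (namely $w(t)q^2-\ol t$ type element), and rewrite the result via $f$, obtaining the image of roughly $(-N\otimes N, N)$ corrected by multiples of $E$-terms. Using $N\otimes N=Nu$ from \cref{Gamma-zpi-free}, the element $(-1\otimes u,1)$ has $p$ times its image equal to zero, and its image under $f$ is exactly $\rho$. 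Showing it is nonzero follows by restricting to $\pi'$, where (since $\ol t^{p-1}$ plays the role of $\ol t$ and $E$ reduces to the element of \cref{torsfin-dir} up to a unit) it maps to the generator there.

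The main obstacle is the explicit computation of $\delta$: the rewriting $(\text{differential})(N\otimes N)$ as $f(\cdot)$ requires the identity $(q^{p-1}-\ol t^{p-1})N=(1-\ol t^{p-1})N + (q^{p-1}-1)N$ and the correction term $\frac{1-q^{p-1}}{p}N$, which is exactly designed so that $E N$ is a clean multiple; I would verify this first with $\ol t^{p-1}$ in place of $\ol t$, working over $\pi'$ and then transferring.
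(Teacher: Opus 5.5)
Your proposal has a genuine gap: the short exact sequence $0\to\mathbb{Z}\pi\xrightarrow{\cdot E}A\to Q\to0$ with $Q$ of rank one does not exist. Already ``$\mathbb{Z}$ with $t$ acting by $q$'' is not a $\mathbb{Z}\pi$-module, since $t$ must act by an automorphism of $\mathbb{Z}$ and $q\neq\pm1$. More seriously, $A/\mathbb{Z}\pi E$ is much bigger than rank one. Split $\mathbb{Q}\pi\cong\mathbb{Q}[t^{\pm1}]\times R$, where $R=\mathbb{Q}(\zeta_p)[t^{\pm1};\sigma]$, using the central idempotent $N/p$. In this splitting, $A\otimes\mathbb{Q}$ has components $\mathbb{Q}[t^{\pm1}]$ and $R(q-\ol t)$. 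The element $E$ has components $1-\ol t^{\,p-1}$ and $q^{p-1}-\ol t^{\,p-1}=S(q-\ol t)$, where $S=\sum_{i=0}^{p-2}q^{p-2-i}\ol t^{\,i}$. Hence $\mathbb{Q}\otimes(A/\mathbb{Z}\pi E)\cong\mathbb{Q}[t^{\pm1}]/(1-\ol t^{\,p-1})\oplus R/RS$, which has $\mathbb{Q}$-dimension $(p-1)+(p-1)(p-2)=(p-1)^2$; this is already $4$ for $p=3$, $q=2$. Relatedly, $E$ is not a unit multiple of $1-\ol t^{\,p-1}$; in fact $E=(1-\ol t^{\,p-1})+(q^{p-1}-1)(1-N/p)$. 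So everything built on this sequence is unfounded: your description of $\Gamma(Q)$, the $\Tor_1$ term, the computation of $\delta$, and the claim that $(-1\otimes u,1)$ yields an element of order $p$. Consequently neither the torsion-freeness for $q^2\not\equiv w(t)$ nor the order-$p$ statement is established. A smaller point: it is the transfer to the index-$p$ subgroup $\langle t\rangle\cong\mathbb{Z}$ that forces $p$-torsion; the subgroup $\pi'$ only bounds it by $\mathbb{Z}/p$.

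The paper proceeds differently. For the upper bound it uses $1\mapsto q-\ol t$, whose cokernel really is rank one, namely $\mathbb{Z}[1/q]$ with $t$ acting by $w(t)/q$. With $D=\Gamma(A)/\Gamma(\mathbb{Z}\pi)$, the group $H_1(\pi;D^w)$ is torsion. So the torsion of $\mathbb{Z}^w\otimes_{\mathbb{Z}\pi}\Gamma(A)$ injects into $\mathbb{Z}^w\otimes_{\mathbb{Z}\pi}D$, and from there into $\mathbb{Z}^w\otimes_{\mathbb{Z}\pi}\Gamma(\mathbb{Z}[1/q])\cong\mathbb{Z}/(q^2-w(t))$. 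This argument only bounds the torsion; it does not produce $\rho$. The element $E$ enters elsewhere. Take the $\mathbb{Z}\pi'$-map from the ideal $J\subset\mathbb{Z}\pi'$ of \cref{torsfin-dir} to $A$ sending $1-\ol{t^{p-1}}\mapsto E$ and $N\mapsto N$; it is well defined because $N$ is central and $NE=(1-\ol t^{\,p-1})N$. This map carries the generator of \cref{torsfin-dir} to $\rho$, so $p\rho=0$ by naturality. Your idea of proving $\rho\neq0$ by restricting to $\pi'$ is unsupported. Restriction lands in $\mathbb{Z}\otimes_{\mathbb{Z}\pi'}\Gamma(A)$, with $A$ viewed as a rank-$(p-1)$ module over $\mathbb{Z}\pi'$, not in $\Gamma(J)$. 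It also yields a sum of the translates $t^{i}\rho$, $0\le i\le p-2$, whose nonvanishing you would have to establish. The paper instead uses a detecting map $A\to(\mathbb{Z}/p)^q$ with $N\mapsto1$ and $q-\ol t\mapsto0$, which sends $\rho$ to $2\frac{1-q^{p-1}}{p}\in\mathbb{Z}/p$. It then checks the arithmetic fact $q^{p-1}\not\equiv1\pmod{p^2}$. This follows from $1<q<p$ and $q^2\equiv w(t)\pmod p$ by expanding $(w(t)+ap)^{(p-1)/2}$. Nothing in your proposal plays the role of this arithmetic input.
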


\begin{proof}
  Let $K$ be a presentation complex for the group $\pi$. Then we have
  that $H^2(K; \mathbb{Z}\pi) \cong A$. The structure of $\Gamma(A)$ as a
  $\mathbb{Z}\mathbb{Z}$-module is induced by restricting the $\mathbb{Z}\pi$-module structure of
  $H^2(K; \mathbb{Z}\pi) \cong H^2(\wh K; \mathbb{Z}\mathbb{Z})$, where $\wh K$ is the cover
  of $K$ corresponding to the $\mathbb{Z}$ subgroup, and the isomorphism on
  cohomology modules holds because this cover is a finite complex. Now
  an argument involving the transfer map similar to the one in the
  proof of \cref{weird-ideal-B-inj} shows that $\mathbb{Z}^w\otimes_{\mathbb{Z}\pi}\Gamma(A)$ can
  only contain $p$-torsion.
  
  Again as in that proof, consider the subgroup $\pi' = \langle T, t^{p-1}\rangle$
  of index $p-1$ isomorphic to $\mathbb{Z}/p \times \mathbb{Z}$ and the cover $\ol K$
  of $K$ corresponding to this subgroup. By \cref{torsfin-dir}, we
  have that, for $J \coloneqq H^2(\ol K; \mathbb{Z}\pi')$, there is one
  copy of $\mathbb{Z}/p$ in $\mathbb{Z}\otimes_{\mathbb{Z}\pi'}\Gamma(J)$ and any $p$-torsion element in
  $\mathbb{Z}\otimes_{\mathbb{Z}\pi}\Gamma(A)$ must map to this by the transfer map since it must
  survive the composition given by multiplication by $p-1$. This means
  there must be at most one copy of $\mathbb{Z}/p$ in $\mathbb{Z}\otimes_{\mathbb{Z}\pi}\Gamma(A)$.

  Consider the $\mathbb{Z}\pi$-module map $\mathbb{Z}\pi \to A$ sending $1 \mapsto q-\ol
  t$. This map is injective and has cokernel $\mathbb{Z}[1/q]$, where $t$ acts
  by $w(t)1/q$ and $T$ acts trivially. By \cref{exact-seqs} we have
  the associated short exact sequences
  \[
    0 \to \Gamma(\mathbb{Z}\pi) \to \Gamma(A) \to D \to 0
  \]
  \[
    0 \to \mathbb{Z}\pi\otimes\mathbb{Z}[1/q] \cong \mathbb{Z}[1/q]\pi \to D \to \Gamma(\mathbb{Z}[1/q]) \cong \mathbb{Z}[1/q^2] \to 0.
  \]
  Here $\mathbb{Z}[1/q^2]$ is the module where $t$ acts by $1/q^2$.

  Using the free resolution for the right module $\mathbb{Z}$ from
  \cref{hfour}, we can calculate directly that $H_1(\pi; \mathbb{Z}[1/q^2])$ is
  torsion and $H_1(\pi; \mathbb{Z}[1/q]\pi^w)$ is trivial, hence by the homology
  long exact sequence associated to the latter of the above sequences
  we can conclude that $H_1(\pi; D^w)$ is torsion. We get the two short
  exact sequences
  \[
    0 \to \mathbb{Z}^w\otimes_{\mathbb{Z}\pi}\Gamma(\mathbb{Z}\pi) \to \mathbb{Z}^w\otimes_{\mathbb{Z}\pi}\Gamma(A) \to \mathbb{Z}^w\otimes_{\mathbb{Z}\pi}D \to 0
  \]
  \[
    0 \to \mathbb{Z}[1/q] \to \mathbb{Z}^w\otimes_{\mathbb{Z}\pi}D \to \mathbb{Z}^w\otimes_{\mathbb{Z}\pi}\Gamma(\mathbb{Z}[1/q]) \cong \mathbb{Z}/(q^2-w(t)) \to 0.
  \]
  The torsion subgroup of $\mathbb{Z}^w\otimes_{\mathbb{Z}\pi}\Gamma(A)$, which we already saw is at
  most $\mathbb{Z}/p$, must inject into $\mathbb{Z}^w\otimes_{\mathbb{Z}\pi}D$, and the torsion subgroup
  there must inject into $\mathbb{Z}/(q^2-w(t))$. We conclude that there is no
  torsion unless $q^2 \equiv w(t) \mod p$.

  Assume then that $q^2 \equiv w(t) \mod p$. Considering again the subgroup
  $\pi'$. Let $J$ be the ideal $\langle1-t^{p-1}, N\rangle$ in $\mathbb{Z}\pi'$ and consider
  the following map of $\mathbb{Z}\pi'$-modules:
  \begin{align*}
    J        & \to A \\
    1-t^{p-1} & \mapsto E \\
    N        & \mapsto N,
  \end{align*}
  where $E$ is the expression from the statement of the lemma. This
  map induces a map $\mathbb{Z}\otimes_{\mathbb{Z}\pi'}\Gamma(J) \to \mathbb{Z}^w\otimes_{\mathbb{Z}\pi}\Gamma(A)$ under which the
  torsion element from \cref{torsfin-dir} maps to the candidate
  torsion element $\rho$. We need to show this element is nontrivial.

  For this, consider the map $A \to (\mathbb{Z}/p)^q$ which evaluates as $1$ at
  $N$ and as $0$ at $q-\ol t$. This map induces a map
  $\mathbb{Z}^w\otimes_{\mathbb{Z}\pi}\Gamma(A) \to \mathbb{Z}^w\otimes_{\mathbb{Z}\pi}\Gamma((\mathbb{Z}/p)^q) \cong \mathbb{Z}/p$ under which $\rho$ maps to
  $2\frac{1-q^{p-1}}{p}$. So it remains to show that $1-q^{p-1} \not\equiv
  0 \mod p^2$.

  Let $k = (p-1)/2$, then $q^{p-1} = (q^2)^k = (w(t)+ap)^k$ for some
  positive $a < p$ (since $1 < q < p$). Note that $q^2 \equiv -1 \mod p$
  can only occur if $k$ is even, hence $w(t)^k = 1$. Then $q^{p-1} =
  (w(t)+ap)^k = 1 + w(t)akp \mod p^2$, but $p$ does not divide
  $w(t)ak$, therefore $q^{p-1} \not\equiv 1 \mod p^2$ as required.
\end{proof}

\begin{theorem}
  \label{p-htpy-types}
  Let $p$ be an odd prime, and let $M$ be a Poincar\'e $4$-complex
  with fundamental group $\mathbb{Z}/p\rtimes\mathbb{Z}$, orientation character $w$, and
  $\pi_2(M)$ stably isomorphic to $A\oplus A'$. If $q^2 \not\equiv w(t) \mod p$,
  $M$ is determined up to homotopy by its quadratic $2$-type;
  otherwise, there are $p$ different homotopy types over the Postnikov
  $2$-type $B$ of Poincar\'e $4$-complexes with the same quadratic
  $2$-type as $M$.
\end{theorem}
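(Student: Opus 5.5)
The natural route is \cref{realization-cor}, which identifies the set of homotopy types over $B$ with $\ker(\ev^{*}\circ\mathcal{B}_{\pi_2(B)})/\ker\varphi_B$. To apply it we first check condition \eqref{diff} of HKPR. By \cref{stab-class}, $c_{*}[M]=0$ for every $M'$ with the same quadratic $2$-type, since the stable class of $\pi_2$ is fixed. By \cref{dualiso} the module $A$ is reflexive, and by \cref{histar-cyclic} the group $H_2(\pi;A^w)$ is trivial or cyclic of order $p$. Hence \cref{right-cont-top-general}, in the form of \cref{flourish}, gives condition \eqref{diff}. Since $H^2(\pi;\mathbb{Z}\pi)=H^3(\pi;\mathbb{Z}\pi)=0$, \cite[Lemma~5.1]{KPR} makes $\ev^{*}$ an isomorphism. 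So the count of homotopy types over $B$ equals $|\ker\mathcal{B}_{\pi_2(B)}/\ker\varphi_B|$, and \cref{ker-phi-cor} applies to describe $\ker\varphi_B$.

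Next we compute $\ker\mathcal{B}_{\pi_2(B)}$. By \cref{ker-B-stab-inv} it equals $\ker\mathcal{B}_{A\oplus A'}$. By \cref{dsum-rem} this map splits as $\mathcal{B}_A\oplus\mathcal{B}_{A'}\oplus(A\otimes_{\mathbb{Z}\pi}A'\to\Hom(A'^{*},A^{**}))$; here $A'=A^{*}$, so the last summand is the map $A^{*}\otimes_{\mathbb{Z}\pi}A\to\Hom(A^{**},A^{**})$. \cref{rkzero} together with \cref{weird-ideal-B-inj} shows $\ker\mathcal{B}_{A'}=0$. \cref{rkzero} with \cref{torsfin-dir,torsfin} shows $\ker\mathcal{B}_A$ is $\mathbb{Z}/p$, generated by $\rho$, when $q^2\equiv w(t)$, and is $0$ otherwise. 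For $q=1$ the condition is $w$ trivial, which matches. Write $X$ for the kernel of the third summand. Then $\ker\mathcal{B}_{\pi_2(B)}\cong \ker\mathcal{B}_A\oplus X$, after pulling back through the summand inclusion $\mathbb{Z}^w\otimes\Gamma(\pi_2(B))\subseteq\mathbb{Z}^w\otimes\Gamma(A\oplus A'\oplus\mathbb{Z}\pi^k)$.

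By \cref{ker-phi-cor}, $\ker\varphi_B$ is exactly the preimage of $X$. The quotient $\ker\mathcal{B}_{\pi_2(B)}/\ker\varphi_B$ is therefore the image of the projection onto the $\mathbb{Z}^w\otimes\Gamma(A)$ component, namely $\ker\mathcal{B}_A$. This gives a single homotopy type when $q^2\not\equiv w(t)$ and $p$ types when $q^2\equiv w(t)$, once we check that $\rho$ lies in the image of $\ker\mathcal{B}_{\pi_2(B)}$ and not merely in $\ker\mathcal{B}_{A\oplus A'\oplus\mathbb{Z}\pi^k}$. In the first case, combining with \cref{bauesbleile} and the fact that isomorphic quadratic $2$-types give a common $B$ (up to automorphisms) yields that $M$ is determined by its quadratic $2$-type.

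I expect the main obstacle to be this summand bookkeeping: $\mathbb{Z}^w\otimes\Gamma(\pi_2(B))$ is only a direct summand of the stabilized $\Gamma$-group, so we must verify that its intersection with $\ker\mathcal{B}$ still surjects onto the $\mathbb{Z}/p$ generated by $\rho$. The first thing I would try is to stabilize $B$ to $B^s$ on both sides, using that $\ker\varphi$ and $\ker\mathcal{B}$ are compatible with the inclusion/collapse maps $B\to B^s\to B$. With the collapse map as a retraction, the quotients for $B$ and $B^s$ agree, since the extra summands $\Gamma(\mathbb{Z}\pi^k)$ and $\ev$-terms have injective $\mathcal{B}$. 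It is also worth confirming that $\ker\mathcal{B}_A\cap\ker\varphi_B=0$. This holds because \cref{ker_phi1} places $\ker\varphi_B$ in the $A\otimes A^{*}$ summand, so the $p$ classes really are distinct.
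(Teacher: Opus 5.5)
Your proposal is correct and follows essentially the same route as the paper: apply \cref{realization-cor} (with condition \eqref{diff} supplied by \cref{flourish} via \cref{dualiso,histar-cyclic}), use injectivity of $\ev^{*}$, split $\ker\mathcal{B}_{\pi_2(B)}$ via \cref{dsum-rem}, quotient out the $A\otimes_{\mathbb{Z}\pi}A'$ part using \cref{ker-phi-cor}, and count $\ker\mathcal{B}_A\oplus\ker\mathcal{B}_{A'}$ with \cref{rkzero,weird-ideal-B-inj,torsfin-dir,torsfin}. The summand-bookkeeping issue you flag, which the paper passes over silently, is settled by the proof of \cref{ker-B-stab-inv} itself: the complementary pieces of $\mathcal{B}_L$ are injective, so $\ker\mathcal{B}_{\pi_2(B)}$ and $\ker\mathcal{B}_{A\oplus A'}$ are literally the same subgroup of $\mathbb{Z}^w\otimes_{\mathbb{Z}\pi}\Gamma(L)$, and hence $\rho$ lies in $\ker\mathcal{B}_{\pi_2(B)}$ without any further stabilization argument.
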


\begin{proof}
  We will apply \cref{realization-cor}. The hypothesis of that
  corollary is satisfied by \cref{flourish}, whose hypotheses in turn
  we showed to hold for $\mathbb{Z}/p\rtimes\mathbb{Z}$ in \cref{dualiso,histar-cyclic}. The
  map $\ev^{*}$ is injective by \cite[Lemma~5.1]{KPR}, and the kernel
  of $\mathcal{B}_{\pi_2(B)}$ is isomorphic to the sum of the kernels of the maps
  $\mathcal{B}_A$, $\mathcal{B}_{A'}$, and $A\otimes_{\mathbb{Z}\pi} A' \to \Hom_{\mathbb{Z}\pi}(A^{**}, A^{**})$ by
  \cref{dsum-rem}. The last of these coincides with the kernel of
  $\varphi_B$ by \cref{ker-phi-cor}. Hence we have that the the set of
  homotopy types over $B$ has the same cardinality as
  $\ker\mathcal{B}_A\oplus\ker\mathcal{B}_{A'}$. The theorem then follows by \cref{rkzero} and
  the calculations in \cref{weird-ideal-B-inj,torsfin-dir,torsfin}.
\end{proof}

\subsection{The form $\lambda_M^{\mathbb{Z}/p}$ detects the homotopy types}

\begin{theorem}
  \label{mainall}
  Let $p$ be an odd prime, and let $M$ and $M'$ be Poincar\'e
  $4$-complexes with fundamental group $\mathbb{Z}/p\rtimes\mathbb{Z}$ and isomorphic
  quadratic $2$-types. Suppose that $\pi_2(M) \cong \pi_2(M')$ is stably
  isomorphic to $A\oplus A'$. If $\lambda_M^{\mathbb{Z}/p} \cong \lambda_{M'}^{\mathbb{Z}/p}$, then $M$ and
  $M'$ are homotopy equivalent.
\end{theorem}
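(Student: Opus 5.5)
Fix $3$-connected maps $f\colon M\to B$ and $f'\colon M'\to B$ to the common Postnikov $2$-type, chosen compatibly with the given isomorphism of quadratic $2$-types and with the isomorphism $\varphi$ of the forms $\lambda^{\mathbb{Z}/p}$. By \cref{bauesbleile} it suffices to show $f_*[M]=f'_*[M']\in H_4(B;\mathbb{Z}^w)$. If $q^2\not\equiv w(t)\bmod p$, we are done by \cref{p-htpy-types}. So assume $q^2\equiv w(t)\bmod p$.

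By \cref{flourish}, the difference $f_*[M]-f'_*[M']$ equals $\varphi_B(x)$ for some $x\in\mathbb{Z}^w\otimes_{\mathbb{Z}\pi}\Gamma(\pi_2(B))$. Since $\Theta_B$ kills the difference and $\ev^*$ is injective, $x\in\ker\mathcal{B}_{\pi_2(B)}$. As in the proof of \cref{p-htpy-types}, modulo $\ker\varphi_B$ (the $A\otimes_{\mathbb{Z}\pi}A'$-part, \cref{ker-phi-cor}) and using \cref{ker-B-stab-inv,rkzero,weird-ideal-B-inj,torsfin-dir,torsfin}, we may take $x=m\rho$ for some $m\in\mathbb{Z}/p$, where $\rho\in\mathbb{Z}^w\otimes_{\mathbb{Z}\pi}\Gamma(A)$ is the torsion generator. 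The goal is therefore to show $m=0$.

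To do this, evaluate on $H^2(-;(\mathbb{Z}/p)^q)$. Take the class $\alpha\in\Hom_{\mathbb{Z}\pi}(\pi_2(B),(\mathbb{Z}/p)^q)$ that restricts on the $A$ summand to the map $A\to(\mathbb{Z}/p)^q$ used in the proof of \cref{torsfin} (sending $N\mapsto1$ and $q-\ol t\mapsto0$), extended by zero on $A'$ and the free summands. I would first check that $\alpha$ lifts to $\tilde\alpha\in H^2(B;(\mathbb{Z}/p)^q)$, via the Serre spectral sequence: the obstruction lies in $H^3(\pi;(\mathbb{Z}/p)^q)$ and is the image of the $k$-invariant. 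This should vanish, because the $k$-invariant lives in the $A'$ part (\cref{dthreeiso}), on which $\alpha$ is zero.

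Next, compute $\langle\tilde\alpha\cup\tilde\alpha,\varphi_B(\rho)\rangle$. By naturality of the cap product and \cref{whitehead-eval} applied on the universal cover (via the transfer/edge map $\varphi_B$), this equals the image of $\rho$ under $\mathbb{Z}^w\otimes_{\mathbb{Z}\pi}\Gamma(A)\to\mathbb{Z}^w\otimes_{\mathbb{Z}\pi}\Gamma((\mathbb{Z}/p)^q)\cong\mathbb{Z}/p$, $a\otimes a\mapsto\alpha(a)^2$. By the end of the proof of \cref{torsfin}, this image is $2\frac{1-q^{p-1}}{p}\neq0$; for $q=1$ the analogous computation for \cref{torsfin-dir} gives $2\neq0$.

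Finally, apply the form isomorphism. Since $\varphi$ is compatible with the right-hand maps in the defining diagram, $f'^*\tilde\alpha$ and $\varphi(f^*\tilde\alpha)$ differ by an element of $H^2(\pi;(\mathbb{Z}/p)^q)$. Compatibility of the forms then gives $\lambda_M(f^*\tilde\alpha,f^*\tilde\alpha)=\lambda_{M'}(f'^*\tilde\alpha+c^*\gamma,\,f'^*\tilde\alpha+c^*\gamma)$ for some $\gamma$. I expect the main obstacle to be controlling these correction terms. The term $\lambda_{M'}(c^*\gamma,c^*\gamma)$ vanishes since $c'_*[M']=0$. For the cross terms $\langle\tilde\alpha\cup c^*\gamma,f'_*[M']\rangle$, the plan is to re-choose the lift $\tilde\alpha$ so that $\varphi$ fixes it, or else to show that these cross terms are linear in the fundamental class and cancel when comparing $f_*[M]$ with $f'_*[M']$, using that $\gamma\cup\tilde\alpha$ is pulled back from a class whose evaluation depends only on the image in $H_2(\pi;A^w)$, which agrees for $M$ and $M'$ by \cref{nontriv-im}. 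Granting this, we get $0=\langle\tilde\alpha^2,\varphi_B(m\rho)\rangle=m\cdot2\frac{1-q^{p-1}}{p}$, hence $m=0$, so $f_*[M]=f'_*[M']$ and \cref{bauesbleile} finishes the proof.
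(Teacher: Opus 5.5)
\textbf{There is a genuine gap, at exactly the step you flag as the main obstacle, and neither proposed fix closes it.} Your computational core does agree with the paper's: lift a class on $A$ that vanishes on $A'$ to $H^2(B;(\mathbb{Z}/p)^q)$, then evaluate on $\rho$ via \cref{whitehead-eval}.

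Write the given isomorphism as $\varphi(f^*y)=f'^*\bigl(y+\delta(y)\bigr)$, where $\delta$ takes values in $U:=H^2(\pi;(\mathbb{Z}/p)^q)$ and vanishes on $U$. This $\delta$ comes with the hypothesis. You cannot re-choose $\tilde\alpha$ so that $\varphi$ fixes it, since adding an element of $U$ to $\tilde\alpha$ does not change $\delta(\tilde\alpha)$.

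Put $\gamma=\delta(\tilde\alpha)$. The cross term $2\langle\tilde\alpha\cup\gamma,f'_*[M']\rangle$ occurs only on the $M'$ side, so nothing cancels it. Your observation that $f_*[M]$ and $f'_*[M']$ agree in $H_2(\pi;A^w)$ only shows that it equals $2\lambda_M^{\mathbb{Z}/p}(f^*\tilde\alpha,c^*\gamma)$, not that it vanishes.

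It need not vanish, for three reasons:
\begin{itemize}
\item Since $q^2\equiv w(t)$, $\lambda_M^{\mathbb{Z}/p}$ is nonsingular by twisted Poincar\'e duality.
\item $U\cong(\mathbb{Z}/p)^2$ is isotropic because $c_*[M]=0$.
\item The induced pairing of $U$ with $H^2(M;(\mathbb{Z}/p)^q)/U$ only sees the $E_{2,2}$-component of $f_*[M]$. By \cref{nontriv-im} this component is a generator of $H_2(\pi;A^w)$, so the pairing factors through restriction to $A$.
\end{itemize}
Counting dimensions, the pairing therefore identifies $\Hom_{\mathbb{Z}\pi}(A,(\mathbb{Z}/p)^q)\cong(\mathbb{Z}/p)^2$ with $U^*$. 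Hence $\delta$ can be chosen so that the cross term exactly cancels $m\cdot 2\frac{1-q^{p-1}}{p}$.

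More is true. The difference $\lambda_{M'}^{\mathbb{Z}/p}-\lambda_M^{\mathbb{Z}/p}$, computed through $B$, is $-m\langle\phi^*(-)\cup\phi^*(-),\rho\rangle$, which depends only on restrictions to $A$. So a suitable shear $\id+\delta$ is an isometry satisfying all the compatibility conditions of \cref{form}. Detecting $m$ through values of the forms thus cannot work when $\varphi$ is allowed to be any map fitting the diagram.

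The paper's proof never confronts this point. It compares the forms only through the identifications $H^2(M;(\mathbb{Z}/p)^q)\cong H^2(B;(\mathbb{Z}/p)^q)\cong H^2(M';(\mathbb{Z}/p)^q)$ induced by $f$ and $f'$. In effect it takes $\varphi=f'^*\circ(f^*)^{-1}$, and then only needs $a,b$ with $b\cap(a\cap f_*[M])\neq b\cap(a\cap f'_*[M'])$. Without its last paragraph, your argument proves that polarized statement (modulo the two points below). The correction terms are a real obstruction, not bookkeeping.

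Two smaller issues remain.
\begin{itemize}
\item For $q=1$, your class ($N\mapsto 1$, $1-\ol t\mapsto 0$) gives $\langle\tilde\alpha\cup\tilde\alpha,\phi_*\rho\rangle=0$. Every term of $\rho$ in \cref{torsfin-dir} involves $1-\ol t$, and your formula $2\frac{1-q^{p-1}}{p}$ is itself $0$ at $q=1$. The paper instead pairs $\alpha$ with a second class $\beta$, where $\beta(1-\ol t)=1$ and $\beta(N)=0$, and gets $1$.
\item You use a splitting $\pi_2(B)\cong A\oplus A'\oplus F$, both to define $\alpha$ and to place $\rho$ in $\Gamma(\pi_2(B))$. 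The hypothesis only gives a stable isomorphism. The paper first stabilizes via $B\to P_2(B\vee S^2)\to B$ to reduce to the split case.
\end{itemize}
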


\begin{proof}
  If $B$ is the common Postnikov $2$-type of $M$ and $M'$, let $B^s$
  be the Postnikov $2$-type of $B \vee S^2$. The inclusion and collapse
  maps induce maps $B \to B^s \to B$ whose composition is the identity, so
  the map $H_4(B; \mathbb{Z}^w) \to H_4(B^s; \mathbb{Z}^w)$ is an inclusion. The subgroup
  of torsion elements where $f_{*}[M]-f_{*}[M']$ lives maps
  isomorphically to the subgroup where live the differences of
  fundamental classes of Poincar\'e complexes $X$ with $\pi_2(X) \cong
  \pi_2(M)\oplus\mathbb{Z}\pi$. Therefore the forms $f_{*}\lambda_M^{\mathbb{Z}/p}, f_{*}'\lambda_{M'}^{\mathbb{Z}/p}$
  induce isomorphic forms on $H^2(B^s; \mathbb{Z}\pi)$ which are also
  respectively isomorphic to the forms $f_{*}\lambda_{X}^{\mathbb{Z}/p},
  f_{*}'\lambda_{X'}^{\mathbb{Z}/p}$ for some $X$ and $X'$. Hence by induction we can
  assume that $\pi_2(M) \cong A\oplus A'\oplus F$ for some free module $F$.

  By \cref{p-htpy-types}, if $q^2 \not\equiv w(t) \mod p$, we already have
  a homotopy equivalence $M \simeq M'$. Thus, assume $q^2 \equiv w(t) \mod p$
  and that $M$ and $M'$ are homotopy inequivalent over the Postnikov
  $2$-type $B$. Write $\phi \colon \wt B \to B$ for the universal
  covering map; then $\phi_{*} \colon H_4(\wt B; \mathbb{Z}) \to H_4(B; \mathbb{Z}^w)$
  factors through $\varphi_B$, and for some positive $k < p$ we have that
  $f_{*}[M]-f_{*}'[M'] = k\phi_{*}\rho \in H_4(B; \mathbb{Z}^w)$, where $\rho \in \Gamma(A) \subseteq
  H_4(\wt B; \mathbb{Z}) \cong \Gamma(A\oplus A'\oplus F)$ is the element from
  \cref{torsfin-dir} or \cref{torsfin}, depending on whether $q=1$. We
  have to show that in this case we cannot have $\lambda_M^{\mathbb{Z}/p} \cong
  \lambda_{M'}^{\mathbb{Z}/p}$. Hence we need a pair $a,b \in H^2(B; (\mathbb{Z}/p)^q)$ such
  that $b\cap(a\cap f_{*}[M]) \neq b\cap(a\cap f_{*}'[M'])$, i.e.\ such that $b\cap(a\cap
  k\phi_{*}\rho) = \phi^{*}b\cap(\phi^{*}a\cap k\rho) \neq 0 \in \mathbb{Z}/p$. As this is valued in
  $\mathbb{Z}/p$, we do not lose any generality by assuming $k=1$.

  Consider the Serre spectral sequence with local coefficients of the
  fibration $\wt B \to B \to B\pi$. This has $E_2$ page $H^i(\pi; H^j(\wt B;
  \mathbb{Z}/p)^q)$ and converges to $H^{i+j}(B; (\mathbb{Z}/p)^q)$. Since $B$ has no
  $3$-cells, the differential $d^3 \colon H^0(\pi; H^2(\wt B; \mathbb{Z}/p)^q) \cong
  \Hom_{\mathbb{Z}\pi}(\pi_2(B), (\mathbb{Z}/p)^q) \to H^3(\pi; (\mathbb{Z}/p)^q)$ must be
  surjective. The same analysis applies for the fibration $K(A', 2)
  \to K \to B\pi$, where $K$ is the presentation complex for the group $\pi$,
  and we have a map $K \to B$ which is an isomorphism on fundamental
  groups, hence by naturality of the spectral sequence we obtain the
  following commutative diagram with exact rows:
  \begin{equation*}
    \begin{tikzcd}[column sep=small]
      H^2(B; (\mathbb{Z}/p)^q) \arrow[r, "\phi^{*}"] \arrow[d]
      & \Hom_{\mathbb{Z}\pi}(A\oplus A'\oplus F, (\mathbb{Z}/p)^q) \arrow[r] \arrow[d]
      & H^3(\pi; (\mathbb{Z}/p)^q) \arrow[r] \arrow[d, equals]
      & 0 \\
      H^2(K; (\mathbb{Z}/p)^q) \arrow[r]
      & \Hom_{\mathbb{Z}\pi}(A', (\mathbb{Z}/p)^q) \arrow[r]
      & H^3(\pi; (\mathbb{Z}/p)^q) \arrow[r]
      & 0.
    \end{tikzcd}
  \end{equation*}
  The middle map is given by restriction to the $A'$ summand, so the
  diagram shows that a map $\alpha \in \Hom_{\mathbb{Z}\pi}(A\oplus A'\oplus F)$ that is trivial
  on the $A'$ summand is in the image of $\phi^{*}$.

  Therefore to finish the proof, we need to exhibit $\pi$-linear maps
  $\alpha,\beta \colon A\oplus A'\oplus F \to (\mathbb{Z}/p)^q$ that are trivial on the $A'$
  summand and for which $\beta\cap(\alpha\cap\rho) \neq 0$. The choice of $\alpha,\beta$
  will be different for the different forms of $\rho$, i.e.\ depending on
  whether $q=1$. We will verify that $\beta\cap(\alpha\cap\rho) \neq 0$ using
  \cref{whitehead-eval}. That lemma requires that $\pi_2(B)$ is free as
  an abelian group, which is true in our case since $A$ and $A'$ are
  both submodules of $\mathbb{Z}\pi$.

  For $q=1$, pick $\alpha$ to be the $\pi$-linear map which evaluates as $1$
  at $N \in A$ and $0$ elsewhere, and $\beta$ to be the map which evaluates
  as $1$ at $1-\ol t \in A$ and $0$ elsewhere. Since $\rho$ is the element
  from \cref{torsfin-dir}, by \cref{whitehead-eval} we have that
  \begin{align*}
    (\beta\cap(\alpha\cap\rho) =~ & \alpha(N)\beta(1-\ol t) + \alpha(1-\ol t)\beta(N)\\
    & - \alpha(1-\ol t)\beta(1-\ol t) - \cdots - \alpha(1-\ol
      t)\beta(T^{\frac{p-1}{2}}(1-\ol t)) -
      \alpha(T^{\frac{p-1}{2}}(1-\ol t))\beta(1-\ol t).
  \end{align*}
  The first summand is $1$ and the rest are $0$, so $\beta\cap(\alpha\cap\rho) = 1$ is
  nonzero in this case.

  For $q\neq1$, pick both $\alpha$ and $\beta$ to be the map which evaluates as
  $1$ at $N \in A$ and $0$ elsewhere. This is the map we already used at
  the end of the proof of \cref{torsfin}, and we saw that the image of
  $\rho$ survives in the image of the induced map $\mathbb{Z}^w\otimes_{\mathbb{Z}\pi}\Gamma(A) \to
  \mathbb{Z}^w\otimes_{\mathbb{Z}\pi}\Gamma((\mathbb{Z}/p)^q) \cong \mathbb{Z}/p$. But the map $\beta\cap(\alpha\cap-) \colon \Gamma(A) \to \mathbb{Z}/p$
  factors through this map by \cref{whitehead-eval}, so $\beta\cap(\alpha\cap\rho)$ is
  nonzero in this case also, and the proof is complete.
\end{proof}

\bibliographystyle{amsalpha} \bibliography{htpytypes}
\end{document}